\def\R{\mathbb{R}}
\newcommand{\Rmnum}[1]{\expandafter\@slowromancap\romannumeral #1@}
\newcommand{\D}{\displaystyle}
\newtheorem{thm}{Theorem}[section]
\newcommand{\norm}[1]{\left\lVert#1\right\rVert}
\newtheorem{definition}{Definition}[section]
\newtheorem{lemma}[thm]{Lemma}
\newtheorem{remark}{Remark}[section]
\newtheorem{theorem}[thm]{Theorem}
\newtheorem{proposition}[thm]{Proposition}
\newcommand{\abs}[1]{\left\vert#1\right\vert}
\begin{document}

\author{Hongyun Peng$^*$}
\thanks{$^*$ School of Applied Mathematics, Guangdong University of Technology, Guangzhou, 510006,
China; email: penghy010@163.com}

\author{Zhian Wang$^\dag$}
\thanks{$^\dag$ Department of Applied Mathematics, Hong Kong Polytechnic University, Hung Hom, Kowloon, Hong Kong; email: mawza@polyu.edu.hk}

\title[Parabolic-Hyperbolic System with Discontinuous Data]{On a Parabolic-Hyperbolic Chemotaxis System with Discontinuous Data: Well-posedness, Stability and Regularity}

\begin{abstract}
The global dynamics and regularity of parabolic-hyperbolic systems is an interesting topic in PDEs due to the coupling of competing dissipation and hyperbolic effects. This paper is concerned with the Cauchy problem of a parabolic-hyperbolic system derived from a chemotaxis model describing the dynamics of the initiation of tumor angiogenesis. It is shown that, as time tends to infinity, the Cauchy problem with large-amplitude discontinuous data admit global weak solutions which converge to a constant state (resp. a viscous shock wave) if the asymptotic states of initial values at far field are equal (resp. unequal). Our results improve the previous results where initial value was required to be continuous and have small amplitude. Numerical simulations are performed to verify our analytical results, illustrate the possible regularity of solutions and speculate the minimal regularity of initial data required to obtain the smooth (classical) solutions of the concerned parabolic-hyperbolic system.

\vspace{0.2cm}

\noindent
{\sc MSC 2000}: {35A01, 35B40, 35B44, 35K57, 35Q92, 92C17}
\vspace{0.2cm}

\noindent
{\sc Keywords}: Parabolic-hyperbolic system, discontinuous initial data, weak solutions, effective viscous flux, regularity
\end{abstract}

%
%

\maketitle

\numberwithin{equation}{section}
\bigbreak
\section{Introduction}
The parabolic-hyperbolic coupled system of partial differential equations may arise from physics, mechanics and material science such as the
compressible Navier-Stokes equations, thermo(visco)elastic systems and elastic systems. The properties of solutions to nonlinear parabolic-hyperbolic
coupled systems are very different from those of parabolic or hyperbolic equations. There are many mathematical researches for various parabolic-hyperbolic coupled systems  on the well-posedness (local and global) and asymptotical behavior of solutions since 1970s (cf. \cite{Qin, Zheng}). It is well-known that the diffusion (parabolic) dissipation can smoothen solutions from the crude initial data, while on the contrary the hyperbolic effect can coarsen solutions from the smooth initial data (e.g. see \cite{smoller}).   Therefore the relation between the regularity of solutions and the initial values of parabolic-hyperbolic systems has been an interesting topic and attracted many studies (e.g. see \cite{Qin, HaoJDE, Frid}). This paper is concerned with the Cauchy problem of the following parabolic-hyperbolic system in $\R$:
\begin{equation}\label{hp}
\begin{cases}
u_t-\chi(uv)_x=Du_{xx},\\
v_t-u_x=0,
\end{cases}
\end{equation}
with the initial value
\begin{equation}\label{Initial} (u, v)(x, 0)=(u_0, v_0)(x), \ \ x\in
{\mathbb{R}}
\end{equation}
and the far-field behavior (i.e., asymptotic state at $\pm \infty$):
\begin{equation}\label{Boundary}
 (u, v)(\pm\infty, t)=(u_\pm, v_\pm),
\end{equation}
where $u_{\pm}\geq  0$, $\chi$ and $D$ are positive constants. The system \eqref{hp} is transformed from the following PDE-ODE singular chemotaxis model proposed in \cite{LSN} (see \cite{Levine97, Othmer97} for mathematical derivation) to describe the interaction between signaling molecules vascular endothelial growth factor (VEGF) and vascular endothelial cells during the initiation of tumor angiogenesis,
\begin{eqnarray}\label{ks1}
\left\{\begin{array}{lll}
u_t= (Du_x-  \xi u (\ln c)_x)_x,\\
c_t=- \mu u c,
\end{array}\right.
\end{eqnarray}
via a Cole-Hopf type transformation
 \begin{equation*}\label{ch}
v=-\frac{1}{\mu}(\ln c)_x =-\frac{1}{\mu}\frac{c_x}{c}, \ \ \chi=\mu\xi>0,
 \end{equation*}
where $u(x,t)$ and $c(x,t)$ denote the density of vascular endothelial cells and concentration of VEGF, respectively; $D>0$ is the diffusivity of endothelial cells, $\xi>0$ is referred to as the chemotactic coefficient measuring the intensity of chemotaxis and $\mu$ denotes the degradation rate of the chemical $c$. Due to the challenge of logarithmic singularity in \eqref{ks1}, most of mathematical studies in the literature focus attention on the non-singular transformed system \eqref{hp}.

The Cauchy problem (\ref{hp})-(\ref{Boundary}) has received a lot attention in the literature.
When the left and right asymptotic states are identical $(u_-=u_+, v_-=v_+)$, the global existence and long-time behavior of (strong) solutions of (\ref{hp}) in $\R$ have been obtained in \cite{GXZZ, Lipz, ZhangYH, zhangts}. When the left and right asymptotic states are different $(u_-\neq u_+, v_-\neq v_+)$, the existence of traveling wavefront solutions of (\ref{hp}) was obtained first in \cite{wang08} and nonlinear stability of traveling wave solutions was subsequently established  in a series of works \cite{jin13, Li09, Li10}. The stability of composite waves of (\ref{hp}) in $\R$ was proved in \cite{Lij13}. All these works have assumed initial values have $H^1$ or higher regularity and show that the strong solutions may have the same regularity as initial values, where in particular smooth solutions can be obtained if the initial values have $H^2(\R)$-regularity (cf. \cite{GXZZ, ZhangYH, zhangts}).
Then an interesting question is whether global strong solutions of (\ref{hp}) can  be obtained if the initial value has lower regularity such as $L^p$-regularity ($1\leq p\leq \infty$). The answer seems unclear due to the coupling of parabolic and  hyperbolic equations.

In this paper, we shall prove if the initial value $(u_0, v_0)$ has only $L^p$-regularity, global existence and stability of weak solutions can be established.  We show our results in two cases with initial data in $L^p$-space which essentially include discontinuous functions. First if the left and right asymptotic states are identical, we prove that the Cauchy problem (\ref{hp})-(\ref{Boundary}) admits global weak solutions which converge to the asymptotic states in some sense as time tend to infinity (see Theorem \ref{pw6-th}). Second if the left and right asymptotic states are different, we show that (\ref{hp})-(\ref{Boundary}) also admits global weak solutions which asymptotically converge to a (shifted) traveling wave solution in appropriate functional space (Theorem \ref{u_+=0}). In both cases, the initial values are allowed to have large amplitudes and we show that the solution component $u$ is spatially continuous for any $t>0$ which has higher regularity than initial values but the solution component $v$ only has the same regularity as initial values.  However we are unable to prove whether the discontinuity of solution component $v$ persists in time if initial values are discontinuous or $(u,v)$ can have higher regularity. To speculate possible outcomes, we use numerical simulations to illustrate that if the initial value $(u_0,v_0)\in L^p(\R)$ is discontinuous, classical solutions is impossible but $H^1$(or continuous) solutions appear to be attainable for the solution component $v$. Furthermore we numerically find that the solution $(u,v)$ of \eqref{hp} will be smooth as long as the initial value $(u_0,v_0)$ has $H^1$-regularity. These numerical evidences indicate that the coupled parabolic-hyperbolic system \eqref{hp} can not smoothen solutions from the discontinuous initial data due to the hyperbolic effect,  but can slightly improve the regularity due to the parabolic dissipation. We also see from numerical simulations that the minimal regularity of initial values to obtain classical (smooth) solutions for \eqref{hp} seems to be $H^1(\R)$.  However all these speculations lack of justification and leave us interesting questions to pursue in the future. Since the initial value $(u_0,v_0)$ considered in the current work has only $L^p(\R)$-regularity,  the energy estimate framework in previous works (cf. \cite{Lipz, zhangts,jin13, Li09, Li10}) relying on the higher regularity of initial values no longer applies. We have employed a few new approaches, such as mollifying technique, time-weight function and effective viscous flux, to obtain desired results (see details in Remark \ref{remtech}). We should remark that the global dynamics of PDEs with discontinuous data is an important topic arising from fluid mechanics and gas dynamics to understand that how the discontinuities evolve in the fluid. Hoff has contributed a series of important results to this direction  (cf. \cite{Hoff-1987, Hoff-19980, Hoff-1992, Hoff-jde, Hoff-arma}) with further development in \cite{Hoff-1989, zhang131, zhang132},  which have essentially inspired our current work.

Before concluding the introduction, we briefly recall some other results related to the system (\ref{hp}).
First in the one dimensional bounded interval, when the Neumann-Dirichlet mixed boundary conditions are imposed, the global existence of solutions of (\ref{hp}) was first established in \cite{zhang07} for small initial data and later in \cite{Li-Zhao-JDE, Li112} for large initial data, where the Dirichlet problem was also considered in \cite{Li-Zhao-JDE}. For the multidimensional whole space $\R^d$ ($d\geq 2$), when the initial datum is close to a constant ground state $(\bar{u}, {\bf 0})$, numerous results have been  obtained.  First a blowup criteria of solutions was established in \cite{Fan-zhao, Li111} and  long-time behavior of solutions was obtained in \cite{Li111} if $(u_0-\bar{u} , {\bf v_0})\in H^s(\R^d)$ for $s>\frac d2+1$ and $\norm{(u_0-\bar{u} , {\bf v_0})}_{H^s\times H^s}$ is small. Later, Hao \cite{Hao} established the global existence of mild solutions in the critical Besov space $\dot{B}_{2,1}^{-\frac12}\times (\dot{B}_{2,1}^{\frac12})^d $ with minimal regularity in the Chemin-Lerner space framework. The global well-posedness of strong solutions of \eqref{hp} in $\R^3$ was established in \cite{DL} if $\norm{(u_0-\bar{u} , {\bf v_0})}_{L^2\times H^1}$ is small. If the initial datum has a higher regularity such that $\norm{(u_0-\bar{u} , {\bf v_0})}_{H^2\times H^1}$ is small, the algebraic decay of solutions was further derived in \cite{DL}. Recently, Wang, Xiang and Yu \cite{Wang-xiang-yu} established the global existence and time decay rates of solutions of \eqref{hp} in $\R^d$ for $d=2,3$ if $(u_0-\bar{u} , {\bf v_0})\in H^2(\R^d)$ and $\norm{(u_0-\bar{u} , {\bf v_0})}_{H^1\times H^1}$ is small.  In the multidimensional bounded domain $\Omega \subset \R^d(d=2,3)$,  global existence and exponential decay rates of solutions under Neumann boundary conditions were obtained in \cite{Li112} for small data, and local existence of solutions in two dimensions with Dirichlet boundary conditions was given in \cite{HWJMPA}. Finally we mention that when the Laplacian (diffusion) in \eqref{hp} was modified to a fractional Laplacian, the global existence of solutions of  \eqref{hp} in a torus with periodic boundary conditions in some dissipation regimes was established in \cite{Rafael1,Rafael2}.

The rest of paper is organized as follows. In section 2,  we state our main results. In section 3, we collect some
elementary facts and inequalities which will be needed in later analysis. In section 4, we prove the large-time behavior of solutions with constant states. The proof of nonlinear stability of viscous shock waves is given in section 5. In section 6, we perform numerical simulations to verify our analytical results and speculate the possible regularity of solutions.

\section{Statement of  main results}
We first explain some  conventions used throughout the paper.  $C$
denotes a generic positive constant which can change from one line
to another. $H^k(\mathbb{{R}})$ denotes the usual $k$-th order
Sobolev space on $\mathbb{R}$ with norm $
\|f\|_{H^k(\mathbb{R})}:=\left(\sum_{j=0}^{k}\int_{\R}
|\partial_x^jf|^2dx\right)^{1/2}$.
For simplicity, we denote $\|\cdot\|:=\|\cdot\|_{L^2(\mathbb{R})}$
and
$\|\cdot\|_k:=\|\cdot\|_{H^k(\mathbb{R})}$.

Next, we shall present our main results concerning the asymptotic behavior of solutions of the Cauchy problem \eqref{hp}-\eqref{Boundary}.

\subsection{Constant states} We first consider the case where the end states $(u_-,v_-)$ and $(u_+, v_+)$ are connected by a constant, say $(u_-,v_-)=(u_+, v_+)=(\bar{u}, \bar{v})=(1,0)$. To state our results on the global stability of the constant steady state $(1,0)$, we first present the definition of weak solutions of \eqref{hp}-\eqref{Boundary}.

\begin{definition} We say that $(u, v)$ is a weak solution of \eqref{hp}-\eqref{Boundary}, if $(u,v)$ is suitably
integrable, and for all test functions $\Psi\in C_0^\infty(\mathbb{R}\times [0, \infty))$ satisfy that
\begin{equation*}
\begin{split}
\int_{\mathbb{R}}u_0\Psi_0(x)dx+\int_0^\infty\int_{\mathbb{R}}\left(u\Psi_t-Du_x\Psi_x\right)dxdt=\chi \int_0^\infty\int_{\mathbb{R}}uv\Psi_x dxdt
\end{split}
\end{equation*}
and
\begin{equation*}
\begin{split}
\int_{\mathbb{R}}v_0\Psi_0(x)dx+\int_0^\infty\int_{\mathbb{R}}\left(v\Psi_t-u_x\Psi\right)dxdt=0.
\end{split}
\end{equation*}
\end{definition}

Then our first main result is encompassed in the following theorem.
\begin{theorem}\label{pw6-th}
Suppose that the initial
data satisfy
\begin{equation}\label{pw6-ict1}
u_0-1 \in L^2(\mathbb{R})\cap L^4(\mathbb{R}), \ \ v_0 \in L^2(\mathbb{R})\cap L^\infty(\mathbb{R}), \ \  u_0>0.
\end{equation} Then the Cauchy
problem \eqref{hp}-\eqref{Boundary} has a
 global weak solution $(u,v)(x,t)$ satisfying
\begin{equation}\label{pw6-le-last0}
\begin{cases}
\begin{split}
&u-1 \in  L^\infty([0,\infty); L^2(\mathbb{R}))\cap C((0,\infty);C(\mathbb{R})), \ \  u_x\in L^2([0,\infty);L^2(\mathbb{R})),\\
&v\in L^\infty([0,\infty); L^2(\mathbb{R})\cap L^{\infty}(\mathbb{R}))\cap L^6([0,\infty); L^6(\mathbb{R})).
\end{split}
\end{cases}
\end{equation}
Furthermore, the following convergence holds:
 \begin{equation}\label{pw6-lb}
\begin{split}
&\sup\limits_{x\in\mathbb{R}}\abs{u(x,t)-1}\to
0~~as~~t\to\infty,\\
&\norm{v}_{L^p(\R)}\to 0
~~\text{as}~~t\to\infty,\ \ 2<p<\infty.
\end{split}
\end{equation}
\end{theorem}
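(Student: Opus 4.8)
The plan is to construct $(u,v)$ as the limit of smooth solutions of regularized problems, the whole difficulty being a priori bounds depending only on the $L^p$ norms in \eqref{pw6-ict1} and not on any derivative of $(u_0,v_0)$. First mollify the data to $(u_0^\varepsilon,v_0^\varepsilon)$ with $u_0^\varepsilon-1,\,v_0^\varepsilon\in H^2(\mathbb R)$, $u_0^\varepsilon>0$, far-field states $(1,0)$, $u_0^\varepsilon-1\to u_0-1$ in $L^2\cap L^4$, $v_0^\varepsilon\to v_0$ in $L^2$ and weakly-$\ast$ in $L^\infty$, and uniformly bounded norms. The known global well-posedness theory for smooth data (\cite{GXZZ,ZhangYH,zhangts}) gives a global smooth solution $(u^\varepsilon,v^\varepsilon)$; writing the first equation of \eqref{hp} as $u_t=Du_{xx}+\chi v u_x+\chi v_x u$, where both last terms vanish wherever $u=0$, the maximum principle yields $u^\varepsilon>0$.

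\textbf{Step 2 ($\varepsilon$-uniform estimates --- the core).} Multiplying the first equation by $\ln u^\varepsilon$ and using $v_t=u_x$ gives the exact Lyapunov identity
\[
\frac{d}{dt}\!\left[\int_{\mathbb R}\!\big(u^\varepsilon\ln u^\varepsilon-u^\varepsilon+1\big)dx+\frac{\chi}{2}\|v^\varepsilon\|^2\right]+D\!\int_{\mathbb R}\!\frac{(u^\varepsilon_x)^2}{u^\varepsilon}dx=0,
\]
which, together with an $L^4$-estimate obtained by testing the first equation with $(u^\varepsilon-1)^3$, controls $\|u^\varepsilon-1\|_{L^\infty_t(L^2\cap L^4)}$, $\|v^\varepsilon\|_{L^\infty_tL^2}$ and $\int_0^\infty\!\int(u^\varepsilon_x)^2/u^\varepsilon$ uniformly in $\varepsilon$. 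The key further structure is the effective viscous flux $F^\varepsilon:=Du^\varepsilon_x+\chi u^\varepsilon v^\varepsilon$, which satisfies $u^\varepsilon_t=F^\varepsilon_x$, the pointwise-in-$x$ damped ODE $Dv^\varepsilon_t+\chi u^\varepsilon v^\varepsilon=F^\varepsilon$, and the parabolic equation $F^\varepsilon_t-DF^\varepsilon_{xx}-\chi v^\varepsilon F^\varepsilon_x-\tfrac{\chi}{D}u^\varepsilon F^\varepsilon=-\tfrac{\chi^2}{D}(u^\varepsilon)^2v^\varepsilon$. Because $u^\varepsilon>0$, the integrating factor of the ODE is a contraction, so $\|v^\varepsilon(t)\|_{L^p}\le\|v_0^\varepsilon\|_{L^p}+\tfrac1D\int_0^t\|F^\varepsilon\|_{L^p}ds$ for all $p\in[2,\infty]$; hence the $L^2$, $L^\infty$ and $L^6([0,\infty);L^6)$ bounds on $v^\varepsilon$ (the last via testing the ODE with $(v^\varepsilon)^5$ and using a positive lower bound for $u^\varepsilon$) reduce to $L^2$- and $L^6$-estimates for the $F^\varepsilon$-equation, which close on the bounds just obtained. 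Finally, since $(u_0,v_0)$ has no derivative bound, derivative estimates must be time-weighted: testing the first equation with $-t\,u^\varepsilon_{xx}$ (and one further level), with the flux term absorbed through $F^\varepsilon$, gives $\sup_{t>0}t\|u^\varepsilon_x(t)\|^2+\int_0^\infty t\|u^\varepsilon_{xx}\|^2dt\le C$, hence $\|u^\varepsilon(t)-1\|_{L^\infty}\le Ct^{-1/4}$ and $\varepsilon$-uniform smoothness of $u^\varepsilon$ on $\{t\ge\tau\}$ for each $\tau>0$.

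\textbf{Step 3 (limit and asymptotics).} Step 2 and the equations bound $\partial_t u^\varepsilon,\partial_t v^\varepsilon$ in suitable local negative-order spaces, so Aubin--Lions plus a diagonal argument give $(u^\varepsilon,v^\varepsilon)\to(u,v)$ strongly in $L^2_{\mathrm{loc}}(\mathbb R\times[0,\infty))$, with $u^\varepsilon\to u$ uniformly on compact subsets of $\mathbb R\times(0,\infty)$; this suffices to pass to the limit in the weak formulation and in all the estimates, yielding \eqref{pw6-le-last0}. For \eqref{pw6-lb}: from $\int_0^\infty\|u_x\|^2dt<\infty$, the weighted bound on $\|u_x(t)\|$ and an equicontinuity argument for $t\ge\tau$, one gets $\|u_x(t)\|\to0$, whence $\sup_x|u(x,t)-1|^2\le2\|u(t)-1\|\,\|u_x(t)\|\to0$; and $v\in L^6([0,\infty);L^6)$ forces $\inf_{[a,a+1]}\|v(t)\|_{L^6}\to0$ as $a\to\infty$, which with $|\tfrac{d}{dt}\|v\|_{L^6}^6|=|6\int v^5u_x\,dx|\le C\|u_x\|$ and the bounds on $\|v\|_{L^2},\|v\|_{L^\infty}$ upgrades to $\|v(t)\|_{L^6}\to0$, and then, by interpolation ($\|v(t)\|_{L^p}\le\|v(t)\|_{L^2}^{\theta}\|v(t)\|_{L^6}^{1-\theta}$ for $2<p<6$, $\|v(t)\|_{L^p}\le\|v(t)\|_{L^6}^{6/p}\|v(t)\|_{L^\infty}^{1-6/p}$ for $6\le p<\infty$), to $\|v(t)\|_{L^p}\to0$ for all $2<p<\infty$.

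\textbf{Main obstacle.} The whole difficulty is Step 2: closing $\varepsilon$-uniform estimates for large-amplitude data with no control of derivatives of $(u_0,v_0)$. Since the $v$-equation carries no dissipation of its own, the boundedness and the eventual decay of $v$ must be extracted entirely from the coupling --- which is precisely what the effective viscous flux relation $Dv_t+\chi uv=F$ and the positivity of $u$ make possible --- while the low-regularity flux $\chi(uv)_x$ must be handled through the parabolic equation for $F$ and carefully tuned time weights. Obtaining the $L^6$ space-time bound on $v$ (needed for the $L^p$-decay) and the continuity of $u$ for $t>0$ from merely $L^p$ data is where the argument is most delicate, and is why the energy-method framework of \cite{Lipz,zhangts,jin13,Li09,Li10} does not apply here.
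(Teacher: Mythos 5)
Your overall architecture is the same as the paper's (mollification, the entropy estimate, $L^2\cap L^4$ bounds on $u-1$, the effective viscous flux $F=Du_x+\chi uv$ with $u_t=F_x$ and the damped ODE $Dv_t+\chi uv=F$, Hoff-type time weights, Aubin--Lions, and interpolation for the decay of $v$), but there is a genuine gap at the derivative level. Your claimed bound $\sup_{t}t\|u^\varepsilon_x(t)\|^2+\int_0^\infty t\|u^\varepsilon_{xx}\|^2dt\le C$, obtained by ``testing with $-t\,u^\varepsilon_{xx}$ with the flux term absorbed through $F^\varepsilon$'', does not go through: the test produces $\chi\int u^\varepsilon v^\varepsilon_x u^\varepsilon_{xx}\,dx$, and there is no $\varepsilon$-uniform control of $v^\varepsilon_x$ whatsoever (for discontinuous $v_0$ one only has $\|v^\varepsilon_{0x}\|^2\sim\varepsilon^{-1}$); writing $Du^\varepsilon_{xx}=F^\varepsilon_x-\chi(u^\varepsilon v^\varepsilon)_x$ does not remove $v^\varepsilon_x$. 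Worse, such a uniform bound cannot be true: in the limit it would give $u_x(\cdot,t)\in H^1\subset C(\R)$ for a.e.\ $t>0$, hence, since $v(\cdot,t)=v_0+\int_0^t u_x\,ds$, the jumps of $v_0$ persist for all time, so $(uv)_x$ carries Dirac masses while $u_t-Du_{xx}$ would be in $L^2$ --- a contradiction. The same objection applies to your ``$\varepsilon$-uniform smoothness of $u^\varepsilon$ on $\{t\ge\tau\}$'': the flux mechanism forces $u_x$ to jump exactly where $v$ jumps, which is why the theorem only asserts $u\in C((0,\infty);C(\R))$ and why the paper's Lemma \ref{pw6-h^1 estimate} estimates $\sigma\|\tilde u_x\|^2$, $\sigma^2\|\tilde u_t\|^2$, $\int\sigma^2\|\tilde u_{xt}\|^2dt$ (testing with $\sigma\tilde u_t$, then differentiating in $t$) and never touches $u_{xx}$. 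Note also that with the unbounded weight $t$ in place of $\sigma=\min\{1,t\}$ the boundary terms such as $t\int\tilde u v\tilde u_x\,dx$ cannot be absorbed (one has $\|\tilde u\|\le C$, not $t\|\tilde u\|^2\le C$), so your rate $\|u^\varepsilon-1\|_{L^\infty}\le Ct^{-1/4}$ is unjustified; the paper obtains only $\sup_x|u-1|\to0$ without rate, from $\int_1^\infty(\|\tilde u_x\|^2+\|\tilde u_{xt}\|^2)dt\le C$.

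The $v$-estimates are also hand-waved exactly where the difficulty sits. The displayed inequality $\|v^\varepsilon(t)\|_{L^p}\le\|v^\varepsilon_0\|_{L^p}+\frac1D\int_0^t\|F^\varepsilon\|_{L^p}ds$ is uniform in $t$ only if $F^\varepsilon\in L^1([0,\infty);L^p)$, which is neither proved nor expected; and your fallback, energy estimates for the parabolic equation of $F^\varepsilon$, faces the anti-damping term ($-\frac{\chi}{D}u^\varepsilon F^\varepsilon$ on the left of your equation), so naive $L^2$/$L^6$ estimates for $F^\varepsilon$ grow in time and do not close uniformly. The paper instead never writes an equation for $F$: it bounds $F$ directly from its definition and $F_x=\tilde u_t$ via the $\sigma$-weighted estimates, obtaining $\|F\|_{L^\infty}\in L^1(0,\hat T)$ and $\in L^4(\hat T,\infty)$, uses the uniform convergence $u\to1$ to get the eventual damping rate $u\ge\frac12$ for $t\ge\hat T$, and then applies the Gronwall-type Lemma \ref{le-ine} to get $\|v\|_{L^\infty}\le C$, with the $L^6_tL^6_x$ bound following from testing the ODE with $v^5$ and $\|F\|_{L^6}^6\le\|F\|^4\|F_x\|^2$. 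Your Step 3 (compactness, $\|v\|_{L^6}\to0$ from time-integrability of $\|v\|_{L^6}^6$ and of its derivative, then interpolation) matches the paper and is fine, but only once the missing uniform estimates above are repaired along these lines.
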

\begin{remark}
\em{The above results hold true regardless of the the amplitude of the initial data. The initial conditions \eqref{pw6-ict1} imply that $(u_0,v_0)$ could be discontinuous, which will bring various difficulties in the analysis. An example of the initial data is the piecewise constant function with arbitrarily large jump discontinuities.}
\end{remark}

\begin{remark}\label{remtech}
\em{Theorem \ref{pw6-th} will be proved by constructing weak solutions as limits of smooth solutions.
Specifically, we first mollify (smoothen) the initial data to obtain the global smooth solutions $(u^\delta, v^\delta)$ and then pass to the limit as $\delta\rightarrow 0$. Compared to the previous works \cite{Lipz,zhangts} for continuous initial data, the main difficulty in the proof is to derive the global {\it a priori} estimates independent of the mollifying parameter $\delta$. In this paper, we shall employ the brilliant idea of Hoff \cite{Hoff-jde, Hoff-arma}, to introduce a time weight function $\sigma=\sigma(t)=\min\{1,t\}$ and the ``effective viscous flux" technique to obtain the desired uniform-in-$\delta$ estimates. The second main difficulty is to obtain the large-time behavior of $v$. Due to the hyperbolicity of
the second equation and low regularity of initial value $v_0$, the regularity of $v$ is hard to attain and the routine energy estimates cannot give the
$L^2$-bound of $v_x$. As a compromise, we succeed in deriving a new estimate for $v$ in the space $v\in L^\infty([0,\infty); L^6(\mathbb{R}))\cap L^6([0,\infty); L^6(\mathbb{R}))$ (see Lemma \ref{pw6-v66}) and obtain the long-time behavior of $v$ as asserted in \eqref{pw6-lb} by making use of the peculiar structure of $\eqref{hp}$. This seems the optimal convergence result we can have for $v$ though the $L^\infty$-convergence is not obtained.}
\end{remark}

\subsection{Stability of viscous shock waves} If $u_-\neq u_+, v_-\neq v_+$,  the existence of (viscous) shock wave can be established (see \cite{jin13}).  The traveling wave solution of $\eqref{hp}$ on $\R$ is a non-constant special solution $(U,V) \in C^{\infty}(\R)$
in the form of
\begin{equation*}
(u,v)(x,t)=(U,V)(z),\ z=x-st, \ s>0,
\end{equation*}
which satisfies
\begin{equation}\label{traveling wave equation}
\begin{cases}
-sU'-\chi(UV)'=DU'',\\
-sV'-U'=0,
\end{cases}
\end{equation}
with boundary condition
\begin{equation*}\label{boundary condition}
U(\pm\infty)=u_\pm,~V(\pm\infty)=v_{\pm},
\end{equation*}
where $'=\frac{d}{dz}$ and $s$ is the wave speed. Here we require $u_\pm> 0$ due to the biological interest. Integrating
(\ref{traveling wave equation}) in $z$ over $\R$ yields the
Rankine-Hugoniot condition as follows
\begin{equation}\label{R-H condition}
\begin{cases}
-s(u_+-u_-)-\chi(u_+v_+-u_-v_-)=0,\\
-s(v_+-v_-)-(u_+-u_-)=0,
\end{cases}
\end{equation}
which gives
\begin{equation}\label{1-6}
s^2+\chi v_+s-\chi u_-=0.
\end{equation}
Solving (\ref{1-6}) for $s$ yields that
\begin{equation}\label{wave speed}
s=\frac{-\chi v_++\sqrt{(\chi v_+)^2+4\chi u_-}}{2}.
\end{equation}

The traveling wave solution $(U,V)$ can be explicitly solved from \eqref{traveling wave equation} and enjoys the following properties (see details in \cite{jin13}).
\begin{proposition}\label{etw}
Assume that $u_\pm$ and $v_\pm$ satisfy $\eqref{R-H condition}$.
Then the system $\eqref{traveling wave equation}$ admits a unique
(up to a translation) monotone traveling wave solutio $(U,V)(x-st)$
with the wave speed $s$ given by (\ref{wave speed}), which
satisfies
$
 U'<0,\ \ V'>0
$
and
\begin{equation*}
|U'|\le \lambda (u_--u_+), \ \ |V'|\le \frac{\lambda (u_--u_+)}{s},
\end{equation*}
where
$
\lambda=\frac{\chi(u_--u_+)}{Ds}>0.
$
\end{proposition}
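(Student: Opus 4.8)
The strategy is to reduce \eqref{traveling wave equation} to a single scalar first-order autonomous ODE that integrates in closed form. First I would integrate the second equation $-sV'-U'=0$ to obtain $U=c-sV$ for a constant $c$; evaluating at $z=\pm\infty$ and invoking the boundary conditions together with the Rankine--Hugoniot relations \eqref{R-H condition} shows $c=u_-+sv_-=u_++sv_+$ consistently, so $V=(c-U)/s$ and it remains only to determine $U$. Next I would integrate the first equation once; since a traveling wave joining distinct constant states has $U',V'\to0$ and $U,V$ bounded at $\pm\infty$, this produces $DU'=\frac{\chi}{s}U^2-\bigl(s+\tfrac{\chi c}{s}\bigr)U-A$, where the right-hand side is a quadratic $P(U)$ that vanishes at $U=u_\pm$ (because $U'=0$ there). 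Since its leading coefficient is $\chi/s$, necessarily $P(U)=\frac{\chi}{s}(U-u_+)(U-u_-)$, so the profile is governed by
\begin{equation*}
DU'=\frac{\chi}{s}(U-u_+)(U-u_-).
\end{equation*}

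In the shock regime relevant here we have $u_->u_+$, so that $\lambda=\frac{\chi(u_--u_+)}{Ds}>0$ as in the statement. The right-hand side above is strictly negative for $u_+<U<u_-$, hence a solution taking values in this interval is strictly decreasing; separating variables and using the partial-fraction identity $\frac{1}{(U-u_+)(U-u_-)}=\frac{1}{u_+-u_-}\bigl(\frac{1}{U-u_+}-\frac{1}{U-u_-}\bigr)$ yields, after exponentiation, the explicit profile
\begin{equation*}
U(z)=\frac{u_+e^{\lambda z}+u_-}{e^{\lambda z}+1},
\end{equation*}
unique up to the translation $z\mapsto z-z_0$, and then $V=(c-U)/s$. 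I would then verify directly that $U(-\infty)=u_-$, $U(+\infty)=u_+$, and correspondingly $V(\pm\infty)=v_\pm$ via $c=u_\pm+sv_\pm$; that $U'<0$ and hence $V'=-U'/s>0$ (monotonicity); and that $u_+\le U\le u_-$ forces $|U-u_+|,\,|U-u_-|\le u_--u_+$, whence $|U'|\le\frac{\chi}{Ds}(u_--u_+)^2=\lambda(u_--u_+)$ and $|V'|=|U'|/s\le\lambda(u_--u_+)/s$. Uniqueness up to translation follows from the autonomy of the reduced first-order ODE together with the rigid relation $U=c-sV$, and smoothness $(U,V)\in C^\infty(\R)$ is immediate from the explicit formulas.

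I do not anticipate a substantive obstacle: the statement amounts to a closed-form integration, already recorded in \cite{jin13}. The only points requiring care are bookkeeping — matching the two integration constants to \eqref{R-H condition} so that $P(U)$ genuinely factors through both end states and the identity $c=u_++sv_+=u_-+sv_-$ holds; tracking signs so that the profile decreases from $u_-$ at $-\infty$ to $u_+$ at $+\infty$ and that the wave speed $s$ given by \eqref{wave speed} is positive; and observing that the configuration $u_->u_+$ (a compressive shock) is precisely what makes $\lambda>0$ and the asserted derivative bounds meaningful.
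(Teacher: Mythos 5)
Your proposal is correct: the reduction $U+sV=c$, the once-integrated Riccati-type equation $DU'=\frac{\chi}{s}(U-u_+)(U-u_-)$ whose roots are fixed by the Rankine--Hugoniot relations \eqref{R-H condition}, the explicit logistic profile, and the resulting sign and derivative bounds are exactly the explicit-solution argument the paper relies on (it states the proposition without proof, deferring to \cite{jin13}, where this same computation is carried out). The only point to keep explicit is the standing assumption $u_->u_+$ (equivalently $\lambda>0$) and the decay $U',V'\to0$ at $\pm\infty$ used to fix the integration constant, both of which you address.
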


\begin{theorem}\label{u_+=0}
Let $u_+>0$ and $(U,V)(x-st)$ be a traveling wave solution
of \eqref{traveling wave equation} obtained in Proposition 2.1. Assume that there exists a constant $x_0$ such
that the initial perturbation from the spatially shifted traveling waves with shift $x_0$ of integral zero, namely
$\phi_0(\infty)=\psi_0(\infty)=0$.
Then there exists a  constant $\varepsilon>0$, such that if
\begin{equation*}
\norm{\phi_0}^2+\norm{\psi_0}^2+\norm{u_0-U}^2+\norm{v_0-V}^2\leq \varepsilon, \ \ v_0-V_0 \in L^\infty, \ \ u_0> 0,
\end{equation*}
where
\begin{equation*}
(\phi_0, \psi_0)(x)=-\int_x^{\infty} (u_0(y)-U(y+x_0),
v_0(y)-V(y+x_0))dy,
\end{equation*}
the Cauchy
problem \eqref{hp}-\eqref{Boundary} has a global
weak solution $(u,v)(x,t)$ satisfying
\begin{equation*}
\begin{split}
u-U \in&   L^\infty([0,\infty); L^2(\mathbb{R}))\cap C((0,\infty);C(\mathbb{R})), \ u-U \in L^2([0,\infty);H^1(\mathbb{R})),\\
v-V \in&  L^\infty([0,\infty); L^2\cap L^\infty)\cap
L^2([0,\infty);L^2).
\end{split}
\end{equation*}
Furthermore, the solution has the following asymptotic stability:
\begin{equation*}
\begin{split}
&\sup\limits_{x\in\mathbb{R}}\abs{u(x,t)-U(x+x_0-st)}\to
0~~as~~t\to\infty,\\
&\sup\limits_{x\in\mathbb{R}} \norm{v(x,t)-V(x+x_0-st)}_{L^p} \to
0~{\rm for~all}~2\le p<\infty~~as~~t\to\infty.
\end{split}
\end{equation*}
\end{theorem}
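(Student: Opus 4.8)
The plan is to transplant the classical antiderivative/weighted-energy proof of nonlinear stability of viscous shock profiles into the low-regularity setting, grafting onto it the time-weight and effective-viscous-flux devices already advertised for Theorem~\ref{pw6-th} in Remark~\ref{remtech}. First I would pass to the travelling frame $z=x-st$ and set $u=U(z)+\phi_z(z,t)$, $v=V(z)+\psi_z(z,t)$; the antiderivatives $\phi,\psi$ are well defined in $L^2(\R)$ precisely because the shifted initial perturbation has zero total mass in both components (this is the role of the integral-zero hypothesis, together with the Rankine--Hugoniot relation \eqref{R-H condition}, which makes both masses conserved in time). Subtracting \eqref{traveling wave equation} and integrating once in $z$ yields the integrated perturbation system
\begin{equation*}
\begin{cases}
\phi_t-s\phi_z-\chi U\psi_z-\chi V\phi_z=D\phi_{zz}+\chi\phi_z\psi_z,\\[1mm]
\psi_t-s\psi_z-\phi_z=0,
\end{cases}
\end{equation*}
with $(\phi,\psi)|_{t=0}=(\phi_0,\psi_0)$ and $(\phi_z,\psi_z)|_{t=0}=(u_0-U,v_0-V)$. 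As in Remark~\ref{remtech} I would then mollify $(u_0,v_0)$ to smooth data $(u_0^\delta,v_0^\delta)$ with the same end states \eqref{Boundary}, solve \eqref{hp} locally, continue to a global smooth solution $(u^\delta,v^\delta)$ using the $\delta$-uniform a~priori estimates below, and recover the weak solution as a limit along $\delta\to0$.

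The core is a closed family of a~priori estimates, uniform in $\delta$, run under a smallness hypothesis on the relevant energy norm. The first ingredient is a profile-weighted $L^2$ estimate for $(\phi,\psi)$ in the style of Matsumura--Nishihara/Kanel': testing the first equation against $\phi$ and the second against a suitably weighted multiple of $\psi$, and using $U'<0<V'$ from Proposition~\ref{etw} together with smallness, produces an inequality of the form
\begin{equation*}
\tfrac{d}{dt}\bigl(\|\phi\|^2+\|\psi\|_w^2\bigr)+D\|\phi_z\|^2+c\!\int_\R V'\phi^2\,dz\le(\text{controllable}).
\end{equation*}
Since the $\psi$-equation carries no diffusion this does not yet exhibit any dissipation of $v-V=\psi_z$; I would recover it through a Kawashima-type compensating functional, essentially $\kappa\tfrac{d}{dt}\!\int_\R\phi\psi_z\,dz$, whose derivative generates $\chi\!\int_\R U\psi_z^2\,dz\ge\chi(\inf U)\|\psi_z\|^2$ --- the strict positivity $\inf U>0$, guaranteed by $u_\pm>0$, being exactly the genuine-coupling (Kawashima--Shizuta) condition --- at the price of terms bounded by $\|\phi_z\|^2$, $\|\phi_{zz}\|^2$ and cubic contributions. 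Together with the plain $L^2$ estimate for $(u-U,v-V)$ this closes to give $\|(\phi,\psi,u-U,v-V)(t)\|^2+\int_0^t(\|(u-U)_x\|^2+\|v-V\|^2)\,d\tau\le C\varepsilon$. To cope with $u_0-U\notin H^1$, I would then run Hoff's time-weighted estimates with $\sigma(t)=\min\{1,t\}$, controlling $\sigma\|(u-U)_x(t)\|^2$, $\sigma^2\|u_{xx}(t)\|^2$ and the corresponding space-time integrals by means of the effective viscous flux $F=Du_x+\chi uv$, which --- using $v_t=u_x$ --- satisfies a scalar parabolic equation with bounded coefficients and $L^2\cap L^\infty$ source, hence is far more regular than $u_x$ itself. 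Divergence-form (De Giorgi--Nash--Moser) theory for the $u$-equation, whose drift $\chi v$ is bounded, then gives continuity of $u$ in $(x,t)$ for $t>0$, a positive lower bound $u\ge\underline u>0$ away from $t=0$, and boundedness of $\|u(\cdot,t)-U\|_{L^\infty}$ on $[\tau,\infty)$ for each $\tau>0$; the $L^\infty$-bound on $v-V$ is propagated from $v_0-V\in L^\infty$ through $v(\cdot,t)=v_0+\int_0^t u_x\,d\tau$ and $\int_0^t\|u_x(\cdot,\tau)\|_{L^\infty}\,d\tau<\infty$. The cubic couplings $\chi(u-U)(v-V)$ are absorbed using these $L^\infty$-bounds and smallness.

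With the $\delta$-uniform bounds in hand, I would extract a subsequence $(u^\delta,v^\delta)$ converging weakly-$\ast$ in the spaces of the statement; since $u^\delta_t=\partial_x F^\delta$ is uniformly bounded in a negative-order space, Aubin--Lions gives strong $L^2_{\mathrm{loc}}$ convergence of $u^\delta$, upgraded to $C_{\mathrm{loc}}$ by the uniform H\"older bound from divergence-form parabolic theory, and this suffices to identify the limit of the only nonlinearity $u^\delta v^\delta$; one then checks that $(u,v)$ solves the weak formulation of \eqref{hp}--\eqref{Boundary} with the asserted regularity. For the large-time behavior, the energy estimates give $\int_0^\infty(\|(u-U)_x\|^2+\|v-V\|^2)\,dt<\infty$, while the time-weighted estimates give $\int_1^\infty(\|u_{xx}\|^2+\|u_{xxx}\|^2)\,dt<\infty$; using these one shows that $t\mapsto\|(u-U)_x(t)\|^2$ and $t\mapsto\|(v-V)(t)\|^2$ are of bounded variation on $[1,\infty)$, hence both tend to $0$. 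Then $\|u(\cdot,t)-U\|_{L^\infty}^2\le 2\|u-U\|\,\|(u-U)_x\|\to0$, and interpolating $\|(v-V)(t)\|_{L^2}\to0$ against the uniform $L^\infty$-bound yields $\|(v-V)(t)\|_{L^p}\to0$ for every $2\le p<\infty$.

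The step I expect to be the main obstacle is the a~priori estimate just described. The routine energy scheme fails because the second equation is non-diffusive, so $(v-V)_x\notin L^2$ and $u_t$ is only of negative regularity; one is forced to dig the dissipation of $v-V$ out of the shock-layer weighted energy plus the Kawashima compensator (which is exactly where $u_\pm>0$ is used), and, simultaneously, to rely on Hoff's time weight and the effective viscous flux for the $u$-derivative bounds that feed back into that same energy and that give meaning to the nonlinear flux and the $L^\infty$-controls. Making these two mechanisms coexist --- in particular bounding $\int_0^t\|\phi_{zz}\|^2\,d\tau$ and the cubic terms with constants independent of $\delta$ and without any $H^1$-smallness of $u_0-U$ --- is the technical core of the proof.
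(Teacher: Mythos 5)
Your overall architecture is essentially the paper's: antiderivative decomposition $(u,v)=(U,V)+(\phi_x,\psi_x)$ justified by the zero-mass condition, mollification of the data, a profile-weighted $L^2$/$H^1$ energy estimate exploiting the monotonicity of $(U,V)$, a cross (Kawashima-type) term extracting $\chi\int U\psi_x^2\,dx$ from the strict positivity $U\ge u_+>0$, Hoff's time weight $\sigma=\min\{1,t\}$ for the second-order bounds on $\phi$, Aubin--Lions compactness to pass to the limit, and decay by interpolation. However, two of your steps would fail as written. First, your mechanism for the uniform bound $\norm{v-V}_{L^\infty}\le C$ is broken: since $v_t=u_x$ and $\partial_t V(x+x_0-st)=U'$, the correct identity is $v-V=(v_0-V)+\int_0^t\phi_{xx}\,d\tau$, and your claim $\int_0^t\norm{u_x}_{L^\infty}d\tau<\infty$ is false ($u_x\to U'\neq 0$ in $L^\infty$), while the corrected requirement $\int_0^\infty\norm{\phi_{xx}}_{L^\infty}dt<\infty$ is not available from the estimates you derive (it would need third-order control of $\phi$). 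The paper instead rewrites the first perturbation equation as a damped transport equation, $\psi_{xt}+\tfrac{\chi}{D}u\,\psi_x=\tfrac1D(\phi_t-\chi V\phi_x)$, and applies the Gronwall-type Lemma \ref{le-ine} with damping coefficient bounded below by $\chi u_+/(2D)$ for large time and source $g=C(\norm{\phi_t}_{L^\infty}+\norm{\phi_x}_{L^\infty})\in L^1(0,\hat T)\cap L^2(\hat T,T)$, which is exactly what the first- and second-order (time-weighted) estimates control; this bound is load-bearing, since it is also what allows the $L^p$ ($2\le p<\infty$) convergence of $v-V$ by interpolation.

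Second, your large-time argument invokes $\int_1^\infty(\norm{u_{xx}}^2+\norm{u_{xxx}}^2)dt<\infty$ and bounded variation of $t\mapsto\norm{(u-U)_x(t)}^2$. In this low-regularity setting these quantities are not controlled and cannot be: $Du_{xx}=u_t-\chi(u v)_x$ involves $v_x$, which is only a distribution here ($v-V$ is merely $L^2\cap L^\infty$), so $u_{xx}\in L^2$ is not to be expected, and indeed the theorem only asserts $u-U\in L^2([0,\infty);H^1)$. The decay must be run one derivative lower, at the level of $\phi$: $\int_1^\infty(\norm{\phi_x}^2+\norm{\phi_{xt}}^2)dt<\infty$ gives $\norm{\phi_x(\cdot,t)}\to0$, and then $\norm{u-U}_{L^\infty}^2=\norm{\phi_x}_{L^\infty}^2\le 2\norm{\phi_x}\norm{\phi_{xx}}\to0$ using only the time-weighted bound $\sigma\norm{\phi_{xx}}^2\le C$, with no need for $\norm{(u-U)_x}\to0$. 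Relatedly, the effective viscous flux and De Giorgi--Nash--Moser machinery you import from the constant-state case are unnecessary here (the paper uses the flux only in Section 4); what is genuinely needed, and missing from your proposal, is the damped-ODE/Gronwall device above.
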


\begin{remark}
\em{The above nonlinear stability results hold true regardless of the size of the wave strength (i.e., $|u_+-u^-|+|v^+-v^-|$ could be arbitrarily large), and the amplitude of initial perturbations $\norm{u_0-U_0}_{L^\infty}$ and $\norm{v_0-V_0}_{L^\infty}$ can be arbitrarily large, which is an significant improvement of previous works (cf. \cite{jin13, Lij13, Li09}) where $\norm{u_0-U_0}_{L^\infty}$ and $\norm{v_0-V_0}_{L^\infty}$ are required to be small.
}
\end{remark}

\section{Some preliminaries}
We first derive a Gronwall-type inequality which will be essentially used in this paper.
\begin{lemma}\label{le-ine} Let the function $y\in W^{1,1}(0,T)$, $\alpha(t)\geq 0$ for $t\geq0$ and $\alpha(t)\geq \beta>0$ for $t\geq T_1>0$ satisfy
\begin{equation}\label{le-i}
y'(t)+\alpha(t)y(t)\le g(t) \ on \ [0, \infty), \ y(0)=y_0,
\end{equation}
where $\beta$ is a positive constant and  $g\in L^1(0, T_1)\cap L^p(T_1, T)$ for some $p\geq1$, and $T_1\in [0, T]$. Then
\begin{equation*}
\begin{split}
\sup_{0\leq t\leq T}y(t)\le |y_0|+(1+\beta^{-1})(\norm{g}_{L^1(0, T_1)}+\norm{g}_{L^p(T_1, T)}).
\end{split}
\end{equation*}
\end{lemma}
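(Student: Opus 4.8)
The plan is to rewrite the differential inequality \eqref{le-i} in integrated (Duhamel) form via an integrating factor, and then estimate the resulting kernel separately on $[0,T_1]$ and on $[T_1,T]$, invoking the lower bound $\alpha\ge\beta$ only on the second interval.

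\textbf{Step 1: Duhamel representation.} Set $A(t)=\int_0^t\alpha(s)\,ds$ (where $\alpha\in L^1_{loc}$, as is implicit in $\alpha y\in L^1$). Then $A$ is absolutely continuous and bounded on $[0,T]$, so $e^{A}$ is absolutely continuous there, and since $y\in W^{1,1}(0,T)$ is absolutely continuous, the product $e^{A(t)}y(t)$ is absolutely continuous on $[0,T]$. Multiplying \eqref{le-i} by $e^{A(t)}$ gives $\big(e^{A}y\big)'(t)\le e^{A(t)}g(t)$ a.e.; integrating over $(0,t)$ and dividing by $e^{A(t)}$ yields
\[
y(t)\ \le\ e^{-A(t)}y_0+\int_0^t e^{-\int_s^t\alpha(\tau)\,d\tau}\,g(s)\,ds .
\]
Since $\alpha\ge0$, we have $0\le\int_s^t\alpha\le A(t)$ for $0\le s\le t$, so $e^{-A(t)}\le1$ (hence the first term is $\le|y_0|$) and the kernel $e^{-\int_s^t\alpha}\le1$ throughout.

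\textbf{Step 2: estimates on the two subintervals.} For $t\in[0,T_1]$ we bound the kernel by $1$ and obtain $y(t)\le|y_0|+\|g\|_{L^1(0,T_1)}$. For $t\in[T_1,T]$ we split the $s$-integral at $T_1$: on $(0,T_1)$ the kernel is $\le1$ and contributes $\|g\|_{L^1(0,T_1)}$, while on $(T_1,t)$ the hypothesis $\alpha(\tau)\ge\beta$ for $\tau\ge T_1$ gives $\int_s^t\alpha\ge\beta(t-s)$, so by Hölder's inequality with $1/p+1/p'=1$,
\[
\int_{T_1}^t e^{-\beta(t-s)}|g(s)|\,ds\ \le\ \Big(\int_{T_1}^t e^{-p'\beta(t-s)}\,ds\Big)^{1/p'}\|g\|_{L^p(T_1,T)}\ \le\ (p'\beta)^{-1/p'}\,\|g\|_{L^p(T_1,T)}
\]
(for $p=1$ the kernel factor is simply $\le1$). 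An elementary case check---$p'\beta\ge1$ versus $p'\beta<1$---shows $(p'\beta)^{-1/p'}\le 1+\beta^{-1}$ for every $p\ge1$, $\beta>0$. Collecting the two cases and taking the supremum over $t\in[0,T]$,
\[
\sup_{0\le t\le T}y(t)\ \le\ |y_0|+\|g\|_{L^1(0,T_1)}+(1+\beta^{-1})\|g\|_{L^p(T_1,T)}\ \le\ |y_0|+(1+\beta^{-1})\big(\|g\|_{L^1(0,T_1)}+\|g\|_{L^p(T_1,T)}\big),
\]
which is the claim.

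The argument is entirely elementary; the only points deserving a line of care are the justification that $e^{A}y$ is absolutely continuous, so that the integrating-factor manipulation is legitimate at the $W^{1,1}$ rather than $C^1$ level, and the bookkeeping that converts $(p'\beta)^{-1/p'}$ into the clean, $p$-independent constant $1+\beta^{-1}$. Neither is a genuine obstacle, so I expect this lemma to serve as a warm-up, with the real difficulties arising in the uniform-in-$\delta$ a priori estimates where it will be applied.
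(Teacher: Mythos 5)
Your proposal is correct and follows essentially the same route as the paper: an integrating factor/Duhamel representation, a split of the source integral at $T_1$ with the kernel bounded by $1$ before $T_1$ and by $e^{-\beta(t-s)}$ after, and a H\"older estimate of the exponential kernel whose $L^{p'}$ norm is absorbed into the constant $1+\beta^{-1}$. Your explicit case check that $(p'\beta)^{-1/p'}\le 1+\beta^{-1}$ is in fact a slightly cleaner justification of that constant than the paper's own intermediate bound.
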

\begin{proof} Let $p'$ denote the conjugate number of $p$.
Multiplying
\eqref{le-i} by $e^{\int_0^t \alpha(\tau) d\tau}$ and integrating the resulting inequality over $(0, t)$ yield that
\begin{equation*}
\begin{split}
e^{\int_0^t \alpha(\tau) d\tau}y(t)=&y_0+\int_0^t e^{\int_0^s \alpha(\tau) d\tau} g(s)ds,
\end{split}
\end{equation*}
which gives
\begin{equation*}
\begin{split}
y(t)=&y_0e^{-\int_0^t \alpha(\tau) d\tau}+e^{-\int_0^t \alpha(\tau) d\tau}\int_0^t e^{\int_0^s \alpha(\tau) d\tau} g(s)ds\\
\le& |y_0|+e^{-\int_0^t \alpha(\tau) d\tau}\left(\int_0^{\min\{T_1,t\}} e^{\int_0^s \alpha(\tau) d\tau} g(s)ds+\int_{\min\{T_1,t\}}^t e^{\int_0^s \alpha(\tau) d\tau} g(s)ds\right)\\
\le& |y_0|+\int_0^{\min\{T_1,t\}} e^{-\int_s^t \alpha(\tau) d\tau} |g(s)|ds+\int_{\min\{T_1,t\}}^t e^{-\int_s^t \alpha(\tau) d\tau} |g(s)|ds\\
\le& |y_0|+\int_0^{\min\{T_1,t\}}|g(s)|ds+\int_{\min\{T_1,t\}}^t e^{-\beta(t-s)} |g(s)|ds\\
\le& |y_0|+\int_0^{\min\{T_1,t\}}|g(s)|ds+\norm{g}_{L^p(\min\{T_1,t\}, t)} \norm{e^{-\beta(t-s)}}_{L^{p'}(T_1, t)}\\
\le& |y_0|+(1+\beta^{-1})(\norm{g}_{L^1(0, T_1)}+\norm{g}_{L^p(T_1, T)}),
\end{split}
\end{equation*}
where in the last inequality we have used the following  fact:
\begin{equation*}
\begin{split}
\norm{e^{-\beta(t-s)}}_{L^r(0,t)}=&\left(\int_0^t |e^{-\beta(t-s)}|^r ds\right)^{\frac{1}{r}}\le e^{-\beta t} (\frac{1}{\beta r}e^{\beta r t}-\frac{1}{\beta r})^{\frac{1}{r}}\\
\le& e^{-\beta t} (\frac{1}{\beta r}e^{\beta r t}-\frac{1}{\beta r})^{\frac{1}{r}}\le \beta^{-1},
\end{split}
\end{equation*}
for all $r\in[1, \infty]$. Thus, the proof of Lemma \ref{le-ine} is completed.
\end{proof}

The well-known Aubin-Lions-Simon Lemma (cf. \cite{Rou}) will be used later. For convenience, we state it below.
\begin{lemma}[Aubin-Lions-Simon lemma]
Let $X_0$, $X$ and $X_1$ be three Banach spaces with $X_0 \subseteq X \subseteq X_1$. Suppose that $X_0$ is compactly embedded in $X$ and that $X$ is continuously embedded in $X_1$. For $1 \leq p, q\leq \infty$, let
$$W=\{f \in L^p([0,T]; X_0) | \partial_t f \in L^q([0,T]; X_1)\}.$$

(i) If $p<\infty$, then the embedding of $W$ into $L^p([0,T]; X)$ is compact (that is $W$ is relatively compact in $L^p([0,T]; X)$);

(ii) If $p=\infty$ and $q>1$, then the embedding of $W$ into $C([0,T]; X)$ is compact.

\end{lemma}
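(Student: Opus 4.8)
The plan is to reduce both statements to three standard ingredients: Ehrling's interpolation/compactness inequality, a quantitative control of time translations furnished by the bound on $\partial_t f$, and a vector-valued compactness theorem (Fr\'echet--Kolmogorov for part (i), Arzel\`a--Ascoli for part (ii)). Fix a sequence $\{f_n\}\subset W$ with $\|f_n\|_{L^p([0,T];X_0)}\le M$ and $\|\partial_t f_n\|_{L^q([0,T];X_1)}\le M$; since relative compactness is a sequential property, it suffices to extract a convergent subsequence. First I would establish Ehrling's lemma: because $X_0$ is compactly embedded in $X$ and $X$ is continuously embedded in $X_1$, for every $\varepsilon>0$ there is $C_\varepsilon$ with $\|v\|_X\le\varepsilon\|v\|_{X_0}+C_\varepsilon\|v\|_{X_1}$ for all $v\in X_0$. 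This is proved by contradiction: a sequence violating it, normalized so that $\|v_k\|_{X_0}=1$, has an $X$-convergent subsequence with nonzero limit by compactness, while its $X_1$-norm tends to zero, contradicting $\|\cdot\|_{X_1}\le C\|\cdot\|_X$. Integrating the $p$-th power of this inequality in $t$ yields $\|u\|_{L^p([0,T];X)}\le\varepsilon\|u\|_{L^p([0,T];X_0)}+C_\varepsilon\|u\|_{L^p([0,T];X_1)}$, which is the device that lets me replace the ``middle'' space $X$ by $X_1$ in all compactness arguments, at the cost of the harmless term $2M\varepsilon$.

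Next I would record the translation estimate (here and below $p'$, $q'$ denote the conjugate exponents of $p$, $q$). Writing $f_n(t+h)-f_n(t)=\int_t^{t+h}\partial_t f_n(s)\,ds$ in $X_1$ and applying H\"older's inequality in $s$ gives $\|f_n(t+h)-f_n(t)\|_{X_1}\le h^{1/q'}\big(\int_t^{t+h}\|\partial_t f_n(s)\|_{X_1}^q\,ds\big)^{1/q}$, so that $\sup_n\|f_n(\cdot+h)-f_n\|_{L^p([0,T-h];X_1)}\le M\,T^{1/p}\,h^{1/q'}$. For $q>1$ this tends to $0$ as $h\to0$ uniformly in $n$; the endpoint $q=1$ permitted in (i) is handled by interpolating the corresponding $L^1$-in-time bound against the uniform $L^\infty([0,T];X_1)$-bound that every element of $W$ automatically enjoys (an absolutely continuous $X_1$-valued function with integrable derivative is bounded).

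For part (i), with $p<\infty$, I would invoke the vector-valued Fr\'echet--Kolmogorov criterion: a bounded family in $L^p([0,T];X)$ is relatively compact once (a) the time integrals $\int_{t_1}^{t_2}f_n\,dt$ lie in a fixed compact subset of $X$ for all $0<t_1<t_2<T$, and (b) the translations $\tau_h f_n-f_n$ are small in $L^p([0,T-h];X)$, uniformly in $n$. Hypothesis (a) holds because $\|\int_{t_1}^{t_2}f_n\,dt\|_{X_0}\le M(t_2-t_1)^{1/p'}$, so these integrals are bounded in $X_0$ and hence precompact in $X$ by the compact embedding. Hypothesis (b) follows by applying the integrated Ehrling inequality to $\tau_h f_n-f_n$: the $X_0$-part is $\le2M\varepsilon$ and the $X_1$-part vanishes as $h\to0$ by the translation estimate, giving $\limsup_{h\to0}\sup_n\|\tau_h f_n-f_n\|_{L^p([0,T-h];X)}\le2M\varepsilon$ for every $\varepsilon$. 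If a self-contained argument is preferred over citing Fr\'echet--Kolmogorov, one replaces this by a time-mollification: for fixed $\rho$ the Steklov average $f_n^\rho(t):=\rho^{-1}\int_t^{t+\rho}f_n$ is bounded in $X_0$ pointwise and equicontinuous into $X$ (via Ehrling and $\tfrac{d}{dt}f_n^\rho=\rho^{-1}(\tau_\rho f_n-f_n)$), hence relatively compact in $C$ by Arzel\`a--Ascoli, while $\sup_n\|f_n^\rho-f_n\|_{L^p([0,T];X)}\to0$ by the translation estimate; a diagonal argument over $\rho=1/k$ then yields total boundedness in $L^p([0,T];X)$.

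For part (ii), with $p=\infty$ and $q>1$, I would use Arzel\`a--Ascoli for $X$-valued functions. Each $f_n$ belongs to $C([0,T];X_1)$ (absolute continuity from $\partial_t f_n\in L^q\subset L^1$), and pointwise relative compactness holds because $\|f_n(t)\|_{X_0}\le M$ makes $\{f_n(t)\}$ precompact in $X$. Uniform equicontinuity in $X$ comes from Ehrling combined with the bound $\|f_n(t)-f_n(s)\|_{X_1}\le M|t-s|^{1/q'}$ from the previous paragraph; this also promotes the $X_1$-continuity to $X$-continuity, so the limit lives in $C([0,T];X)$. Here the hypothesis $q>1$ is used in an essential way: it is exactly what makes the H\"older exponent $1/q'$ positive and hence supplies a modulus of continuity, whereas $q=1$ would give only absolute continuity and not uniform equicontinuity. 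The main obstacle throughout is the compactness engine behind part (i) --- turning the uniform translation-smallness and the pointwise precompactness into genuine total boundedness in $L^p([0,T];X)$ --- which is precisely the content of the vector-valued Fr\'echet--Kolmogorov theorem; the time-mollification plus diagonalization sketched above is the route I would take to prove it from scratch, the remaining steps (Ehrling and the fundamental-theorem-of-calculus translation bound) being routine.
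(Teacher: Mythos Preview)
The paper does not prove this lemma at all: it is stated with a citation to Roub\'{i}\v{c}ek \cite{Rou} and then used as a black box in Sections~4.2 and~5.2 to pass to the limit in the mollified approximations. So there is no ``paper's own proof'' to compare against.

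Your sketch is the standard route to the Aubin--Lions--Simon lemma (Ehrling's inequality, a translation estimate from the fundamental theorem of calculus, then vector-valued Fr\'echet--Kolmogorov for (i) and Arzel\`a--Ascoli for (ii)), and it is essentially correct. The one place worth tightening is the $q=1$ endpoint in part~(i): your interpolation argument is right but deserves one more line. From $\|f_n\|_{L^\infty([0,T];X_1)}\le K$ uniformly (which you correctly note follows from the $L^p$-bound in $X_0$ plus absolute continuity in $X_1$) and the Fubini estimate $\int_0^{T-h}\|f_n(t+h)-f_n(t)\|_{X_1}\,dt\le Mh$, one gets
\[
\int_0^{T-h}\|f_n(t+h)-f_n(t)\|_{X_1}^p\,dt\le(2K)^{p-1}\int_0^{T-h}\|f_n(t+h)-f_n(t)\|_{X_1}\,dt\le(2K)^{p-1}Mh,
\]
which supplies the missing uniform $L^p$-translation smallness. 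With that made explicit, the argument goes through.
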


\section{Proof of Theorem $\ref{pw6-th}$ }

In this section, we are interested in the dynamics of \eqref{hp}
for fixed values of $D$ and $\chi$. Hence, for simplicity, we take $D=\chi=1$. Now, we begin the proof of Theorem \ref{pw6-th}, by
constructing approximate solutions based upon the mollified initial data. First, we mollify the (coarse) initial data $(u_0, {v}_0)$ as follows:
\begin{equation*}
\begin{split}
u^\delta_0=j^\delta*u_0, \ \ \ v_0^\delta=j^\delta*v_0,
\end{split}
\end{equation*}
where $j^\delta$ is the standard mollifying kernel of width $\delta$ (e.g. see \cite{Adams}). Then we consider the following approximate system
\begin{equation}\label{pw6-hptr}
\begin{cases}
u^\delta_t-(u^\delta v^\delta)_x=u^\delta_{xx},\\
v^\delta_t-u^\delta_x=0,
\end{cases}
\end{equation}
with smooth initial data $(u^\delta_0, v_0^\delta)$ which satisfies
\begin{equation}\label{pw6ini}
(u^\delta_0-1, {\bf v}^\delta_0)\in H^3.
\end{equation}
Using standard arguments, we can obtain the local existence of solutions to the approximate system \eqref{pw6-hptr} with initial data $(u^\delta_0, v_0^\delta)$ satisfying (\ref{pw6ini}).
Next, we shall show in a sequence of lemmas that these approximate solutions
satisfy some global {\it a priori} estimates, independently of the mollifying parameter $\delta$. This will
allow us to take the $\delta$-limit of the sequence of approximate solutions in order to obtain
the solutions of Theorem \ref{pw6-th}.
\subsection{A Priori Estimates for \eqref{pw6-hptr}}
For the sake of simplicity, in this subsection, we still use $(u,  v)$ to
represent the approximate solution $(u^\delta, v^\delta)$. We start with the entropy estimate of $(u,  v)$.

\begin{lemma}\label{pw6-le1} Let $(u, v)$ be a smooth solution of \eqref{hp}-\eqref{Boundary} under the conditions of Theorem $\ref{pw6-th}$. Then there exists a positive constant $C$ independent of $t$ and $\delta$, such that
\begin{equation}\label{pw6-bL^2}
\int_{\mathbb{R}} (u\ln u-u+1) dx+\norm{v}^2+\int_0^T\int_{\mathbb{R}}
\frac{(u_x)^2}{u}dxdt\leq C.
\end{equation}
\end{lemma}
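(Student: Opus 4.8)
\textbf{Proof proposal for Lemma \ref{pw6-le1}.}

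The plan is to derive this as a standard entropy/energy identity for the approximate system \eqref{pw6-hptr} (with $D=\chi=1$), working with the smooth solutions so that all manipulations are justified, and then check that every constant produced is independent of both $t$ and the mollification width $\delta$. First I would multiply the first equation of \eqref{pw6-hptr} by $\ln u$ (note $u>0$ is propagated by the maximum principle applied to the first equation, so $\ln u$ is well-defined) and the second equation by $v$, then integrate over $\mathbb{R}$. The first product gives $\frac{d}{dt}\int_{\mathbb{R}}(u\ln u - u + 1)\,dx$ on the time-derivative side, and from the diffusion term $-\int u_{xx}\ln u\,dx = \int \frac{(u_x)^2}{u}\,dx$ after integration by parts. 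The chemotaxis term $-\int (uv)_x \ln u\,dx$ integrates by parts to $\int uv \frac{u_x}{u}\,dx = \int v\, u_x\,dx$. The second product gives $\frac12\frac{d}{dt}\|v\|^2 - \int u_x v\,dx$. Adding the two identities, the cross terms $\int v\,u_x\,dx$ cancel \emph{exactly}, which is the crucial structural feature of the system, and one is left with
\begin{equation*}
\frac{d}{dt}\left(\int_{\mathbb{R}}(u\ln u - u + 1)\,dx + \frac12\|v\|^2\right) + \int_{\mathbb{R}}\frac{(u_x)^2}{u}\,dx = 0.
\end{equation*}
Integrating in time over $(0,T)$ then yields the claimed bound, provided the initial datum of this functional is controlled uniformly in $\delta$.

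The remaining point, and the one requiring a little care, is to verify that $\int_{\mathbb{R}}(u_0^\delta\ln u_0^\delta - u_0^\delta + 1)\,dx + \frac12\|v_0^\delta\|^2$ is bounded independently of $\delta$. For the $v$ part this is immediate since mollification does not increase the $L^2$ norm: $\|v_0^\delta\| \le \|v_0\| \le C$ by \eqref{pw6-ict1}. For the entropy part I would use the elementary inequality $0 \le u\ln u - u + 1 \le C(u-1)^2$ valid on any bounded interval $u \in [m, M]$ together with a global bound $u\ln u - u + 1 \le C((u-1)^2 + (u-1)^4)$ or similar that holds for all $u>0$; since $u_0 - 1 \in L^2 \cap L^4$ and mollification is bounded on $L^2$ and on $L^4$, we get $\|u_0^\delta - 1\|_{L^2} + \|u_0^\delta - 1\|_{L^4} \le C$ uniformly in $\delta$, hence $\int_{\mathbb{R}}(u_0^\delta\ln u_0^\delta - u_0^\delta + 1)\,dx \le C$ uniformly. (One also uses $u\ln u - u + 1 \ge 0$ for all $u>0$, which makes the left side a genuine nonnegative functional, and the far-field condition $u_0^\delta \to 1$, $v_0^\delta \to 0$ to ensure the spatial integrals converge.)

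The only genuine obstacle is the boundary/decay justification in the integrations by parts: one must know that $u_x \to 0$ and $v \to 0$ at $x = \pm\infty$ and that $u\ln u - u + 1$ is integrable, so that no boundary terms appear when integrating $-\int u_{xx}\ln u\,dx$ and $-\int(uv)_x\ln u\,dx$ by parts. For the smooth approximate solutions this follows from the $H^3$ regularity assumed in \eqref{pw6ini} and a standard local-existence bootstrap that propagates spatial decay; since at this stage we are only proving an \emph{a priori} estimate for solutions already known to be smooth and decaying, this is legitimate. Everything else — the cancellation of the cross term, the sign of the dissipation $\int (u_x)^2/u\,dx \ge 0$, and the conservation structure — is algebraic and requires no estimate, so no small-data or amplitude restriction enters, consistent with the statement.
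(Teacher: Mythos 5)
Your proposal is correct and follows essentially the same route as the paper: multiply the first equation by $\ln u$ and the second by $v$, use the exact cancellation of the cross terms $\int v u_x\,dx$ to obtain the entropy--dissipation identity, and bound the initial entropy via $u\ln u-u+1\le C(u-1)^2$ together with the fact that mollification does not increase the $L^2$ (and $L^4$) norms, so the constant is uniform in $\delta$ and $t$. Your added care about positivity of $u$, decay at infinity for the integrations by parts, and $\delta$-uniformity of the mollified initial entropy only makes explicit what the paper leaves implicit.
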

\begin{proof}
\par
Multiplying the first equation of $(\ref{hp})$ by $\ln u$ and the second equation of $(\ref{hp})$ by $v$,
adding the results and integrating the result by parts over $[0,t]\times\R$, we have
\begin{equation*}
\begin{split}
&\int_{\mathbb{R}}(u\ln u-u+1) dx+\int_0^T\int_{\mathbb{R}}
\frac{(u_x)^2}{u}dxdt\\=&\int_{\mathbb{R}}(u_0\ln u_0-u_0+1)dx\le C \int_{\mathbb{R}}(u_0-1)^2dx\le C,
\end{split}
\end{equation*}
which leads to \eqref{pw6-bL^2}. Then, the proof of Lemma \ref{pw6-le1} is completed.
\end{proof}
 To carry out further energy estimates, we introduce a change of $\tilde{u}=u-1$.
Thus, problem \eqref{pw6-hptr} turns
into
\begin{equation}\label{hptr}
\begin{cases}
\tilde{u}_t-\tilde{u}_{xx}=(\tilde{u}v)_x+v_x,\\
v_t-\tilde{u}_x=0,\\
(\tilde{u}, v)(x,0)=(u_0-1, v_0)(x).
\end{cases}
\end{equation}

Next, we will derive the $L^2$ estimate for $(\tilde{u}, v)$. Since our goal is to prove the convergence of the solution to the positive
constant ground state, uniform-in-time estimation of the solution is necessary. Thus, we need a uniform-in-time estimation for $(\tilde{u}, v)$. It
turns out that the standard procedure ($L^2$-type energy estimate) is not sufficient to
achieve our goal, and we need to employ higher-order estimates.

\begin{lemma}\label{pw6le2} Let $(\tilde{u}, v)$ be a smooth solution of \eqref{hptr} under the conditions of Theorem $\ref{pw6-th}$. Then there exists a positive constant $C$ independent of $t$ and $\delta$, such that
\begin{equation}\label{pw6-L^4}
\norm{\tilde{u}}^2+\norm{\tilde{u}}_{L^4}^4+\norm{v}^2+\int_0^T\norm{\tilde{u}_x}^2+\norm{\tilde{uu_x}}^2 dt\leq C.
\end{equation}
\end{lemma}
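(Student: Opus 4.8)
The plan is to combine two energy identities: the standard $L^2$ estimate for $(\tilde u, v)$ and an $L^4$ estimate for $\tilde u$, exploiting the structure of \eqref{hptr} so that the troublesome terms (which cannot be controlled on their own because $v$ has no a priori $H^1$ bound) cancel or absorb into the good dissipation terms $\norm{\tilde u_x}^2$ and $\norm{\tilde u u_x}^2$. First I would multiply the first equation of \eqref{hptr} by $\tilde u$ and integrate over $\R$; integration by parts gives $\frac12\frac{d}{dt}\norm{\tilde u}^2 + \norm{\tilde u_x}^2 = \int_\R (\tilde u v)_x\tilde u\,dx + \int_\R v_x\tilde u\,dx$. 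The first term on the right is $-\int \tilde u v \tilde u_x\,dx$, which by Young's inequality is bounded by $\frac18\norm{\tilde u u_x}^2 + C\int v^2\tilde u^2\,dx$ (using $u_x = \tilde u_x$); the second term is $-\int v\tilde u_x\,dx \le \frac18\norm{\tilde u_x}^2 + C\norm{v}^2$, and $\norm{v}^2$ is already bounded by Lemma \ref{pw6-le1}. The residual $\int v^2\tilde u^2\,dx$ is the essential obstacle and must be handled together with the $L^4$ estimate, not estimated here in isolation.

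Next I would multiply the first equation of \eqref{hptr} by $\tilde u^3$ and integrate: this produces $\frac14\frac{d}{dt}\norm{\tilde u}_{L^4}^4 + 3\int \tilde u^2\tilde u_x^2\,dx = \int (\tilde u v)_x\tilde u^3\,dx + \int v_x\tilde u^3\,dx$. The first right-hand term is $-3\int \tilde u^2 v\,\tilde u_x\tilde u\,dx = -3\int v\,\tilde u^3\tilde u_x\,dx$, absorbable as $\varepsilon\int\tilde u^2\tilde u_x^2\,dx + C\int v^2\tilde u^4\,dx$; the second is $-3\int v\,\tilde u^2\tilde u_x\,dx$, absorbable as $\varepsilon\int\tilde u^2\tilde u_x^2\,dx + C\int v^2\tilde u^2\,dx$. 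For the $v$-equation I would multiply by $v$ to get $\frac12\frac{d}{dt}\norm{v}^2 = \int \tilde u_x v\,dx$, which already appeared above. To close the loop I also need an equation controlling $\int v^2\tilde u^2\,dx$ and $\int v^2\tilde u^4\,dx$; the natural move (this is where the ``peculiar structure'' is used) is to differentiate $\frac{d}{dt}\int \tilde u^k v^2\,dx$ using both PDEs: $\partial_t(\tilde u^k v^2) = k\tilde u^{k-1}v^2(\tilde u_{xx} + (\tilde u v)_x + v_x) + 2\tilde u^k v\,\tilde u_x$, and after integration by parts the worst term $\int \tilde u^{k-1}v^2\tilde u_{xx}\,dx$ generates $-\int \tilde u^{k-1}v^2\tilde u_x\,dx$ (cross terms) plus $-(k-1)\int\tilde u^{k-2}v^2\tilde u_x^2\,dx$, the latter having a favorable sign for $k=2$ (it is $-\int v^2\tilde u_x^2\,dx\le 0$) and for $k=4$ being $-3\int\tilde u^2 v^2\tilde u_x^2$, again favorable. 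The term $\int\tilde u^{k-1}v\,v_t\tilde u_x$-type contributions reduce to integrals already controlled.

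I would then form a single Lyapunov functional, schematically $E(t) = \norm{\tilde u}^2 + \norm{\tilde u}_{L^4}^4 + \norm{v}^2 + \eta_1\int v^2\tilde u^2\,dx + \eta_2\int v^2\tilde u^4\,dx$ with small weights $\eta_1,\eta_2$, so that the bad contributions $C\int v^2\tilde u^2$ and $C\int v^2\tilde u^4$ coming from the $L^2$ and $L^4$ identities are dominated by the good sign terms produced in the evolution of the weighted integrals, and all genuinely dangerous terms are swallowed by the dissipation $\norm{\tilde u_x}^2 + \int\tilde u^2\tilde u_x^2\,dx$ (recall $\int\tilde u^2\tilde u_x^2 = \norm{\tilde u u_x}^2$ up to lower order, since $u = \tilde u+1$, so $\norm{\tilde u u_x}^2 \le 2\int\tilde u^2 u_x^2 + \ldots$). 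Integrating the resulting differential inequality $E'(t) + c\big(\norm{\tilde u_x}^2 + \norm{\tilde u u_x}^2\big) \le C E(t)\cdot(\text{integrable factor})$ or, better, a genuinely dissipative $E'(t) + c(\ldots)\le 0$ after using the Gagliardo--Nirenberg inequality $\norm{\tilde u}_{L^\infty}^2\le C\norm{\tilde u}\norm{\tilde u_x}$ to control $\int v^2\tilde u^2 \le \norm{v}^2\norm{\tilde u}_{L^\infty}^2$, together with the uniform bounds already in hand, yields \eqref{pw6-L^4}. The main obstacle is precisely engineering the weights $\eta_1,\eta_2$ and the $\varepsilon$'s in Young's inequality so that every $v$-weighted quartic or sextic term closes — this is delicate because $v$ is only in $L^2\cap L^\infty$ with the $L^\infty$ bound itself needing to be propagated (presumably in a companion lemma), so some care is needed to invoke only the $L^2$-in-space, $L^\infty$-in-time control of $v$ that is already available from Lemma \ref{pw6-le1}, rather than circularly using estimates proved later.
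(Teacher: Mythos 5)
There is a genuine gap, and it sits exactly at the point you flag as the ``essential obstacle''. Your plan leaves the terms $\int_{\R} v^2\tilde u^2\,dx$ and $\int_{\R} v^2\tilde u^4\,dx$ to be controlled through the evolution of the weighted functionals $\int_{\R}\tilde u^k v^2\,dx$, but that computation does not close: since $\partial_x(\tilde u^{k-1}v^2)=(k-1)\tilde u^{k-2}\tilde u_x v^2+2\tilde u^{k-1}v v_x$, integrating $k\int_{\R}\tilde u^{k-1}v^2\tilde u_{xx}\,dx$ by parts produces, besides the good-signed term, the cross term $-2k\int_{\R}\tilde u^{k-1}v\,v_x\,\tilde u_x\,dx$ (not $-\int\tilde u^{k-1}v^2\tilde u_x\,dx$ as you wrote), and the contribution $k\int_{\R}\tilde u^{k-1}v^2(\tilde u v)_x\,dx$ likewise generates $v_x$-weighted integrals. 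No estimate on $v_x$ is available at this stage (and none is ever obtained in this problem: $v_0$ is only in $L^2\cap L^\infty$ and the second equation is hyperbolic), so these terms are genuinely uncontrollable; integrating by parts a second time only trades them back for $\tilde u_{xx}$ weighted by $v^2$, which is equally out of reach. Your claim that the cross terms ``reduce to integrals already controlled'' is therefore unfounded. A second, independent problem is the endgame: even if the weighted integrals were tamed, the differential inequality you write closes at best as $E'(t)\le CE(t)$ (e.g.\ via $\int v^2\tilde u^2\,dx\le\norm{v}^2\norm{\tilde u}_{L^\infty}^2\le C\norm{\tilde u}\,\norm{\tilde u_x}\le \varepsilon\norm{\tilde u_x}^2+C\norm{\tilde u}^2$), which Gronwall turns into a bound growing like $e^{CT}$, not the required constant independent of $t$ and $\delta$; you never identify the time-integrable Gronwall factor. (Your preliminary bound $-\int v\tilde u_x\,dx\le\tfrac18\norm{\tilde u_x}^2+C\norm{v}^2$ has the same defect: integrated in time it contributes $CT$; this term should instead be cancelled exactly against $\tfrac12\frac{d}{dt}\norm{v}^2=\int\tilde u_x v\,dx$, a cancellation you mention but do not actually use.)

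The paper's proof avoids both difficulties by pure algebra rather than by auxiliary functionals: it takes the $L^2$, $L^3$ and $L^4$ identities (multiplying the first equation of \eqref{hptr} by $\tilde u$, $\tilde u^2$, $\tilde u^3$ and the second by $v$) and forms the combination $\eqref{pw6-3}+\tfrac32\bigl(2\times\eqref{pw6-1}-\eqref{pw6-2}\bigr)$, so that \emph{all} cubic and quartic cross terms cancel identically and the resulting functional $A(t)$ and dissipation $B(t)$ are nonnegative by completing squares. The single surviving term $-3\int_{\R}\tilde u^3 v\tilde u_x\,dx$ is then bounded by $C\norm{\tilde u}_{L^\infty}^4\norm{v}^2+\tfrac12\norm{\tilde u|\tilde u_x|}^2$, and the crucial point you are missing is the estimate $\norm{\tilde u}_{L^\infty}^4\le C\bigl(\int_{\R}(\tilde u^4+\tilde u^2)\,dx\bigr)\int_{\R}\frac{\tilde u_x^2}{u}\,dx$, whose second factor is integrable in time by the entropy estimate of Lemma \ref{pw6-le1}; Gronwall then yields a bound uniform in $T$. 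In short, the lower-order $v$-weighted terms your scheme creates (and cannot control) are precisely the ones the paper's linear-combination trick eliminates, and the entropy dissipation $\int_0^T\int_{\R}\frac{u_x^2}{u}\,dx\,dt\le C$ is the integrable factor that makes the Gronwall argument uniform in time. As written, your proposal does not establish \eqref{pw6-L^4}.
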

\begin{proof}
\par The proof of \eqref{pw6-L^4} is due to Li, Li and Zhao \cite{Lipz}.
Multiplying the first equation of $(\ref{hptr})$ by $\tilde{u}$ and the second equation of $(\ref{hptr})$ by $v$, adding the results
and integrating by parts over $\R$, we have
\begin{equation}\label{pw6-1}
\begin{split}
\frac 12\frac{d}{dt}\left(\norm{\tilde{u}}^2+\norm{v}^2\right)+\norm{\tilde{u}_{x}}^2
 =\int_{\R} (\tilde{u}v)_x\tilde{u}dx=-\int_{\R}\tilde{u}v\tilde{u}_{x}dx.
\end{split}
\end{equation}
Multiplying the first equation of $(\ref{hptr})$ by $\tilde{u}^2$
and integrating the result by parts over $\R$, we have
\begin{equation}\label{pw6-2}
\begin{split}
\frac 13\frac{d}{dt}\left(\norm{\tilde{u}}_{L^3}^3\right)+2\int_{\R} \tilde{u}|\tilde{u}_{x}|^2dx
 =-2\int_{\R}\tilde{u}^2v\tilde{u}_{x}dx-2\int_{\R}\tilde{u}v\tilde{u}_{x}dx.
\end{split}
\end{equation}
Multiplying the first equation of $(\ref{hptr})$ by $\tilde{u}^3$
and integrating the result by parts over $\R$, we have
\begin{equation}\label{pw6-3}
\begin{split}
\frac 14\frac{d}{dt}\left(\norm{\tilde{u}}_{L^4}^4\right)+ 3\norm{\tilde{u}|\tilde{u}_{x}|}^2
 =-3\int_{\R}\tilde{u}^3v\tilde{u}_{x}dx-3\int_{\R}\tilde{u}^2v\tilde{u}_{x}dx.
\end{split}
\end{equation}
It follow from $2\times\eqref{pw6-1}-\eqref{pw6-2}$ that
\begin{equation}\label{pw6-4}
\begin{split}
\frac{d}{dt}\left(\norm{\tilde{u}}^2+\norm{v}^2-\frac 13\norm{\tilde{u}}^3\right)+2\norm{\tilde{u}_{x}}^2-2\int_{\R} \tilde{u}|\tilde{u}_{x}|^2dx
 =2\int_{\R}\tilde{u}^2v\tilde{u}_{x}dx.
\end{split}
\end{equation}
The operation $\displaystyle\eqref{pw6-3}+\frac 32\times\eqref{pw6-4}$ leads to
\begin{equation}\label{pw6-5}
\begin{split}
\frac{d}{dt}A(t)+3B(t) =-3\int_{\R}\tilde{u}^3v\tilde{u}_{x}dx.
\end{split}
\end{equation}
Noticing that
\begin{equation}\label{pw6-5-0}
\begin{split}
&A(t)=\frac32\norm{\tilde{u}}^2+\frac32\norm{v}^2-\frac 12\norm{\tilde{u}}_{L^3}^3+\frac14\norm{\tilde{u}}_{L^4}^4
 =\norm{\tilde{u}}^2+\frac18\norm{2\tilde{u}-\tilde{u}^2}^2+\frac18\norm{\tilde{u}}_{L^4}^4+\frac32\norm{v}^2,\\[2mm]
&B(t)=\norm{\tilde{u}_{x}}^2-\int_{\R} \tilde{u}|\tilde{u}_{x}|^2dx+\norm{\tilde{u}|\tilde{u}_{x}|}^2
 =\frac12\norm{\tilde{u}_{x}}^2+\frac12\norm{\tilde{u}_{x}-|\tilde{u}|\tilde{u}_{x}}^2+\frac12\norm{\tilde{u}|\tilde{u}_{x}|}^2.
\end{split}
\end{equation}
For the term on right-hand side of \eqref{pw6-5}, by Cauchy-Schwarz inequality, we get
\begin{equation*}
\begin{split}
-3\int_{\R}\tilde{u}^3v\tilde{u}_{x}dx\le& \frac92\norm{\tilde{u}^2v}^2+\frac12\norm{\tilde{u}|\tilde{u}_{x}|}^2\\
\le& C\norm{\tilde{u}}_{L^\infty}^4\norm{v}_{L^2}^2+\frac12\norm{\tilde{u}|\tilde{u}_{x}|}^2
\le C\norm{\tilde{u}}_{L^\infty}^4+\frac12\norm{\tilde{u}|\tilde{u}_{x}|}^2,
\end{split}
\end{equation*}
which together with \eqref{pw6-5} and \eqref{pw6-5-0} gives
\begin{equation}\label{pw6-7}
\begin{split}
&\frac{d}{dt}\left(\norm{\tilde{u}}^2+\frac18\norm{2\tilde{u}-\tilde{u}^2}^2+\frac18\norm{\tilde{u}}_{L^4}^4+\frac32\norm{v}^2\right)\\
&+\frac32\norm{\tilde{u}_{x}}^2+\frac32\norm{\tilde{u}_{x}-|\tilde{u}|\tilde{u}_{x}}^2+\norm{\tilde{u}|\tilde{u}_{x}|}^2\le
C\norm{\tilde{u}}_{L^\infty}^4.
\end{split}
\end{equation}
Noticing that
\begin{equation*}
\begin{split}
\tilde{u}^4=&4\int_{-\infty}^x \tilde{u}^3\tilde{u}_xdx\le 4\left(\int_{\R}\tilde{u}^6(\tilde{u}+1)dx\right)^{\frac 12} \left(\int_{\R}\frac{\tilde{u}_x^2}{\tilde{u}+1}dx\right)^{\frac 12}\\
\le& 4\norm{\tilde{u}}_{L^\infty}^2\left(\int_{\R}\tilde{u}^2(\tilde{u}+1)dx\right)^{\frac 12} \left(\int_{\R}\frac{\tilde{u}_x^2}{u}dx\right)^{\frac 12},
\end{split}
\end{equation*}
which gives
\begin{equation*}
\begin{split}
\norm{\tilde{u}}_{L^\infty}^4
\le& 4\norm{\tilde{u}}_{L^\infty}^2\left(\int_{\R}\tilde{u}^2(\tilde{u}+1)dx\right)^{\frac 12} \left(\int_{\R}\frac{\tilde{u}_x^2}{u}dx\right)^{\frac 12}\\
\le& \frac12\norm{\tilde{u}}_{L^\infty}^4+8\left(\int_{\R}\tilde{u}^2(\tilde{u}+1)dx\right)\left(\int_{\R}\frac{\tilde{u}_x^2}{u}dx\right)\\
\le& \frac12\norm{\tilde{u}}_{L^\infty}^4+C\left(\int_{\R}\left(\tilde{u}^4+\tilde{u}^2\right)dx\right)\left(\int_{\R}\frac{\tilde{u}_x^2}{u}dx\right).
\end{split}
\end{equation*}
Thus,
\begin{equation*}
\begin{split}
\norm{\tilde{u}}_{L^\infty}^4
\le& C\left(\int_{\R}\left(\tilde{u}^4+\tilde{u}^2\right)dx\right)\left(\int_{\R}\frac{\tilde{u}_x^2}{u}dx\right).
\end{split}
\end{equation*}
Substituting the above inequality into \eqref{pw6-7} and applying Gronwall's inequality, we obtain \eqref{pw6-L^4}.
\end{proof}

Next, we want to derive the appropriate estimates for the first order derivative of $(u,v)$. Since we plan to use the limit of the mollified function $(u^\delta, v^\delta)$ as $\delta \to 0$ to obtain the solution of our target system \eqref{hp}-\eqref{Boundary}, the estimates of the first order derivative of $(u^\delta, v^\delta)$ need to be independent of $\delta$. If we employ the method for $H^1$-estimates used in \cite{GXZZ, Lipz, zhangts}, we shall encounter the term $\int_\R (|u^\delta_{0x}|^2+|v^\delta_{0x}|^2)dx$ which is out of control since the initial assumption of $(u_0, v_0)$ is not yet up to $H^1(\R)$, see (\ref{pw8mde0}). Indeed in general the bound of $\int_\R (|u^\delta_{0x}|^2+|v^\delta_{0x}|^2)dx$ is of order $\frac{1}{\delta}$ given that $L^2(\R)$-norm is bounded (see \cite[Lemma 1.2]{OP}). Hence we have to find an idea to avoid the estimates of first-order derivative of $(u^\delta_0, v^\delta_0)$ to attain the uniform boundedness of first-order estimates in $\delta$. Inspired by the brilliant idea of Hoff \cite{Hoff-1992, Hoff-jde} of treating discontinuous data, we introduce a weight function $\sigma=\sigma(t)=\min\{1, t\}$ to resolve this obstacle. Unfortunately, this method is  not valid to $v^\delta$. Since, in this framework, to avoid the estimate of $\int_\R |v^\delta_{0x}|^2dx$, the uniform-in-$\delta$ bound of $\int_0^T\int_\R |v^\delta_{x}|^2dxdt$ is necessary. It turns out that it is nearly impossible to get the uniform-in-$\delta$ of $\int_0^T\int_\R |v^\delta_{x}|^2dxdt$ when the second equation of \eqref{hp} is hyperbolic (no diffusion term with respect to $v$) and $v$ has only lower-regularity initial data ($v_0 \in L^2(\mathbb{R})\cap L^\infty(\mathbb{R})$). Thus, we can only get the first order derivative of $u^\delta$ in the following.


\begin{lemma}\label{pw6-h^1 estimate}
Let the conditions of Theorem $\ref{pw6-th}$ hold and $(\tilde{u}, v)$ be a smooth solution of \eqref{hptr}. Then there exists a positive constant $C$ independent of $t$  and $\delta$, such that
\begin{equation}\label{pw6-L^2}
\begin{split}
\sigma\norm{\tilde{u}_x}^2+\sigma^2\norm{\tilde{u}_t}^2+\sigma^2\norm{v_t}^2+\int_0^T\sigma\norm{\tilde{u}_t}^2dt+\int_0^T\sigma^2\norm{\tilde{u}_{xt}}^2dt\le
C,
\end{split}
\end{equation}
where  $\sigma=\sigma(t)=\min\{1,t\}$.
\end{lemma}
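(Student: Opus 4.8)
The plan is to adapt Hoff's approach to parabolic systems with discontinuous data. Because the mollified data only obey $\norm{u^\delta_{0x}}+\norm{v^\delta_{0x}}=O(\delta^{-1})$, no $H^1$–bound can be expected at $t=0$, so every first–order estimate must carry the time weight $\sigma(t)=\min\{1,t\}$, which vanishes at $t=0$ and equals $1$ for $t\ge1$; accordingly the argument is split at $t=1$, the two regimes being glued by the Gronwall–type Lemma \ref{le-ine} with $T_1=1$. Since all constants must be independent of $\delta$, I would only use the $\delta$–uniform facts already proved in Lemmas \ref{pw6-le1}--\ref{pw6le2} — namely $\norm{\tilde u}+\norm{\tilde u}_{L^4}+\norm{v}\le C$ and $\int_0^\infty\big(\norm{\tilde u_x}^2+\norm{\tilde u\,\tilde u_x}^2\big)\,dt\le C$, whence also $\int_0^\infty\int_{\R}u\,\tilde u_x^2\,dx\,dt\le C$ since $\int_{\R}u\,\tilde u_x^2\,dx\le\norm{\tilde u_x}^2+\norm{\tilde u\,\tilde u_x}\norm{\tilde u_x}$ — together with the a priori bound $\norm{v(t)}_{L^\infty}\le C$.

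For the bound on $\tilde u_x$ and $\tilde u_t$, multiply the first equation of \eqref{hptr} by $\sigma\tilde u_t$ and integrate over $\R$ to obtain
\[
\sigma\norm{\tilde u_t}^2+\frac12\frac{d}{dt}\big(\sigma\norm{\tilde u_x}^2\big)=\frac12\sigma'\norm{\tilde u_x}^2+\sigma\int_{\R}(uv)_x\,\tilde u_t\,dx .
\]
The term $\frac12\sigma'\norm{\tilde u_x}^2$ is supported in $[0,1]$ and lies in $L^1(0,1)$. The delicate term is $\sigma\int(uv)_x\tilde u_t\,dx$, which cannot be closed directly because it contains $v_x$ and the second equation of \eqref{hptr} is hyperbolic. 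To bypass this I would introduce the effective viscous flux $F:=\tilde u_x+uv$, so that the first equation reads $\tilde u_t=F_x$, integrate by parts in $x$, and move the time derivative off $\tilde u_t$:
\[
\sigma\int_{\R}(uv)_x\tilde u_t\,dx=-\frac{d}{dt}\Big(\sigma\int_{\R}uv\,\tilde u_x\,dx\Big)+\sigma'\int_{\R}uv\,\tilde u_x\,dx+\sigma\int_{\R}\big(\tilde u_t v+u\,\tilde u_x\big)\tilde u_x\,dx ,
\]
where $\partial_t(uv)=\tilde u_t v+u\,\tilde u_x$ by the second equation $v_t=\tilde u_x$. Transferring the exact derivative to the left, and then using Cauchy--Schwarz, $\abs{\sigma\int_\R uv\,\tilde u_x\,dx}\le\frac14\sigma\norm{\tilde u_x}^2+C$, and the a priori bounds on $(\tilde u,v)$, I can absorb a fraction of $\sigma\norm{\tilde u_t}^2$ and recast the remainder as an $L^1(0,\infty)$ source (the contribution $\sigma\int_\R u\,\tilde u_x^2\,dx$ being time-integrable by the observation above) plus a locally bounded multiple of $\sigma\norm{\tilde u_x}^2$, the $t\ge1$ damping being supplied by the parabolic dissipation. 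Lemma \ref{le-ine} (with $T_1=1$) then gives $\sup_{t\ge0}\sigma\norm{\tilde u_x}^2\le C$, and integrating in time gives $\int_0^\infty\sigma\norm{\tilde u_t}^2\,dt\le C$.

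For the bound on $\tilde u_t$ and $\tilde u_{xt}$, differentiate the first equation of \eqref{hptr} in $t$; using $v_t=\tilde u_x$ and $v_{xt}=\tilde u_{xx}$ it becomes $\tilde u_{tt}-\tilde u_{xxt}=(\tilde u_t v+\tilde u\,\tilde u_x)_x+\tilde u_{xx}$. Multiplying by $\sigma^2\tilde u_t$ and integrating by parts yields
\[
\frac12\frac{d}{dt}\big(\sigma^2\norm{\tilde u_t}^2\big)+\sigma^2\norm{\tilde u_{xt}}^2=\sigma\sigma'\norm{\tilde u_t}^2-\sigma^2\int_{\R}\big(\tilde u_t v+\tilde u\,\tilde u_x+\tilde u_x\big)\tilde u_{xt}\,dx .
\]
Each term on the right can be bounded by $\frac12\sigma^2\norm{\tilde u_{xt}}^2$ plus a source integrable on $(0,\infty)$: $\sigma\sigma'\le\sigma$; the $v$–term, after the Gagliardo--Nirenberg bound $\norm{\tilde u_t}_{L^\infty}^2\le C\norm{\tilde u_t}\norm{\tilde u_{xt}}$ and $\norm{v}\le C$, is dominated by $\tfrac16\sigma^2\norm{\tilde u_{xt}}^2+C\sigma\norm{\tilde u_t}^2$; and the remaining terms are bounded by $C\big(\norm{\tilde u\,\tilde u_x}^2+\norm{\tilde u_x}^2\big)$. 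Thus the whole right-hand side is controlled by $\tfrac12\sigma^2\norm{\tilde u_{xt}}^2+C\big(\sigma\norm{\tilde u_t}^2+\norm{\tilde u\,\tilde u_x}^2+\norm{\tilde u_x}^2\big)$, whose last factor is in $L^1(0,\infty)$ by the previous step and Lemmas \ref{pw6-le1}--\ref{pw6le2}. Since $\sigma^2$ vanishes at $t=0$, integrating in time yields $\sup_{t\ge0}\sigma^2\norm{\tilde u_t}^2\le C$ and $\int_0^\infty\sigma^2\norm{\tilde u_{xt}}^2\,dt\le C$. Finally, the second equation of \eqref{hptr} gives $v_t=\tilde u_x$, so $\sigma^2\norm{v_t}^2=\sigma^2\norm{\tilde u_x}^2\le\sigma\big(\sigma\norm{\tilde u_x}^2\big)\le C$ by the first step; collecting the three bounds proves \eqref{pw6-L^2}.

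The main obstacle is the term $\sigma\int_\R(uv)_x\tilde u_t\,dx$ and its counterpart after differentiating in $t$: since the $v$–equation is hyperbolic and $v_0$ only lies in $L^2\cap L^\infty$, there is no bound on $v_x$ and a plain Cauchy--Schwarz fails. The effective-viscous-flux identity $\tilde u_t=\partial_x(\tilde u_x+uv)$ together with $v_t=\tilde u_x$ is precisely what trades the uncontrollable $v_x$ for the controllable $\norm{\tilde u_t}$, $\norm{\tilde u_x}$ and $\norm{\tilde u\,\tilde u_x}$; and the whole computation must be carried out with $\delta$–independent constants, which is why the singular $O(\delta^{-1})$ initial derivatives are permitted to appear only under the vanishing weight $\sigma$. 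A secondary difficulty is that the ordinary $H^1$ energy estimate gives no uniform-in-time control, which is exactly what the two-regime split and Lemma \ref{le-ine} overcome.
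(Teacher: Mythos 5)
Your overall architecture (time weight $\sigma=\min\{1,t\}$, multiplying by $\sigma\tilde u_t$, moving the time derivative off the flux term via $(uv)_t=\tilde u_tv+u\tilde u_x$, then a $\sigma^2$-weighted estimate for the differentiated equation) is essentially the paper's, but the way you close the first step has a genuine gap: you invoke ``the a priori bound $\norm{v(t)}_{L^\infty}\le C$'' as if it were available alongside Lemmas \ref{pw6-le1}--\ref{pw6le2}. It is not. At this stage the only bounds on $v$ are $\norm{v}_{L^2}\le C$ and $v_0\in L^\infty$; the uniform $L^\infty$ bound on $v$ is Lemma \ref{pw6-secl}, whose proof uses precisely the conclusion \eqref{pw6-L^2} of the present lemma (through the effective viscous flux $F$, whose estimates require $\sigma\norm{\tilde u_x}^2\le C$ and $\int_0^T\sigma\norm{\tilde u_t}^2dt\le C$, and through Lemma \ref{lnn} which defines $\hat T$). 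Since $v_t=\tilde u_x$ and no bound on $\int_0^t\norm{\tilde u_x}_{L^\infty}ds$ is available from Lemmas \ref{pw6-le1}--\ref{pw6le2}, there is no independent way to propagate $\norm{v_0}_{L^\infty}$ here, so your treatment of the crucial cross term $\sigma\int_{\R}\tilde u_t\,v\,\tilde u_x\,dx$ — absorbing $\epsilon\sigma\norm{\tilde u_t}^2$ and bounding the remainder by $C\sigma\norm{v}_{L^\infty}^2\norm{\tilde u_x}^2$ — is circular, and your claimed self-contained closure of the $\sigma$-weighted estimate fails.

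The paper's way around this is structural: it estimates the same cross term using only $\norm{v}_{L^2}\le C$ together with $\norm{\tilde u_t}_{L^\infty}^2\le 2\norm{\tilde u_t}\norm{\tilde u_{xt}}$, which unavoidably produces $C\eta\int_0^T\sigma^2\norm{\tilde u_{xt}}^2dt$ on the right of the first-order estimate; this quantity is then controlled by the $\sigma^2$-weighted estimate for the time-differentiated system, which in turn is bounded by $C+C\int_0^T\sigma\norm{\tilde u_t}^2dt$, and the two estimates are closed simultaneously by choosing $\eta$ small. Your second step is fine on its own (it uses only $\norm{v}_{L^2}\le C$ and matches the paper's $J_4$ treatment), but in your scheme it is invoked \emph{after} step one, whereas without the $L^\infty$ bound on $v$ the two steps must be coupled as in the paper. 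A secondary inaccuracy: your appeal to Lemma \ref{le-ine} with ``damping supplied by the parabolic dissipation'' does not fit the inequality you derive, since the dissipation there is $\sigma\norm{\tilde u_t}^2$ and no term $\alpha(t)\,\sigma\norm{\tilde u_x}^2$ with $\alpha\ge\beta>0$ appears on the left; if the $L^\infty$ bound were legitimate no Gronwall lemma would be needed (the right-hand side would already be in $L^1(0,\infty)$ by Lemma \ref{pw6le2}), and since it is not legitimate the Gronwall lemma cannot rescue the step either.
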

\begin{proof}
\par
We first multiply the first equation of $(\ref{hptr})$ by $\sigma \tilde{u}_t$ and
integrate the resulting equation over $\R\times[0, T]$ to get
\begin{equation}\label{pw6-es-ut}
\begin{split}
&\frac{1}{2}\sigma\norm{\tilde{u}_x}^2+\int_0^T\sigma\norm{\tilde{u}_t}^2dt\\
=&\frac{1}{2}\int_0^{\sigma(t)}\norm{\tilde{u}_x}^2dt-\int_0^T\sigma\int_{\mathbb{R}}
(\tilde{u}v)\tilde{u}_{xt} dxdt-\int_0^T\sigma\int_{\mathbb{R}}v\tilde{u}_{xt}dxdt .
\end{split}
\end{equation}
For the first term on the right-hand side of \eqref{pw6-es-ut}, we have from \eqref{pw6-L^4}
\begin{equation}\label{pw6-es1}
\begin{split}
\frac{1}{2}\int_0^{\sigma(t)}\norm{\tilde{u}_x}^2dt\le \frac{1}{2}\int_0^T\norm{\tilde{u}_x}^2dt \le C.
\end{split}
\end{equation}
For the second term on the right-hand side of \eqref{pw6-es-ut}, we have
\begin{equation}\label{pw6-es2}
\begin{split}
-\int_0^T\sigma\int_{\mathbb{R}}
(\tilde{u}v)\tilde{u}_{xt} dxdt=&-\int_0^T\left(\sigma\int_{\mathbb{R}}
\tilde{u}v\tilde{u}_{x} dx\right)_tdt+\int_0^{\sigma(t)}\int_{\mathbb{R}}
\tilde{u}v\tilde{u}_{x} dxdt\\
&+\int_0^T\sigma\int_{\mathbb{R}}
\tilde{u}_tv\tilde{u}_{x} dxdt+\int_0^T\sigma\int_{\mathbb{R}}
\tilde{u}v_t\tilde{u}_{x} dxdt\\[2mm]
=& H_1+H_2+H_3+H_4.
\end{split}
\end{equation}
By the Sobolev inequality $\norm{f}_{L^\infty}^2\le 2\norm{f}\norm{f_x}$, Cauchy-Schwarz inequality and \eqref{pw6-L^4}, we have
\begin{equation*}
\begin{split}
H_1=&-\sigma\int_{\mathbb{R}}
\tilde{u}v\tilde{u}_{x} dx\le \frac{\sigma}{8}\norm{\tilde{u}_{x}}^2+2\sigma\norm{\tilde{u}v}^2\\[2mm]
\le& \frac{\sigma}{8}\norm{\tilde{u}_{x}}^2+2\sigma\norm{\tilde{u}}^2_{L^\infty}\norm{v}^2\\[2mm]
\le& \frac{\sigma}{8}\norm{\tilde{u}_{x}}^2+C\sigma\norm{\tilde{u}}\norm{\tilde{u}_x}\\[2mm]
\le& \frac{\sigma}{4}\norm{\tilde{u}_{x}}^2+C\sigma\norm{\tilde{u}}^2\\[2mm]
\le& \frac{\sigma}{4}\norm{\tilde{u}_{x}}^2+C.
\end{split}
\end{equation*}
Employing Cauchy-Schwarz inequality and \eqref{pw6-L^4} again, we can estimate $H_2$ as
\begin{equation*}
\begin{split}
H_2=&\int_0^{\sigma(t)}\int_{\mathbb{R}}
\tilde{u}v\tilde{u}_{x} dxdt
\le  \frac{1}{4}\int_0^{\sigma(t)}\norm{\tilde{u}_{x}}^2dt+C\int_0^{\sigma(t)}\norm{\tilde{u}}^2dt\\
\le& \frac{1}{4}\int_0^{t}\norm{\tilde{u}_{x}}^2dt+C\norm{\tilde{u}}^2\le C,
\end{split}
\end{equation*}
where $\sigma(t)=\min\{1,t\}$ has been used. By the Sobolev
inequality, Cauchy-Schwarz inequality and \eqref{pw6-L^4}, we have
\begin{equation*}
\begin{split}
H_3=&\int_0^T\sigma\int_{\mathbb{R}}
\tilde{u}_tv\tilde{u}_{x} dxdt\\
\le&  C\int_0^{t}\norm{\tilde{u}_{x}}^2dt+\eta\int_0^{t}\sigma^2\norm{\tilde{u}_t v}^2dt\\
\le& C\int_0^{t}\norm{\tilde{u}_{x}}^2dt+\eta\int_0^{t}\sigma^2\norm{\tilde{u}_t}_{L^\infty}^2\norm{v}^2dt\\
\le& C\int_0^{t}\norm{\tilde{u}_{x}}^2dt+\eta\int_0^{t}\sigma^2\norm{\tilde{u}_t}\norm{\tilde{u}_{xt}}\norm{v}^2dt\\
\le& C+C\eta\int_0^{t}\sigma^2\norm{\tilde{u}_t}\norm{\tilde{u}_{xt}}dt\\
\le& C+\frac{1}{2}\int_0^{t}\sigma^2\norm{\tilde{u}_t}^2+C\eta\int_0^{t}\sigma^2\norm{\tilde{u}_{xt}}^2dt,
\end{split}
\end{equation*}
where $\eta>0$ is a positive constant which will be determined later. For $H_4$, we have from the fact $v_t=\tilde{u}_x$ and Cauchy-Schwarz inequality that
\begin{equation*}
\begin{split}
H_4=&\int_0^T\sigma\int_{\mathbb{R}}
\tilde{u}v_t\tilde{u}_{x} dxdt =\int_0^T\sigma\int_{\mathbb{R}}
\tilde{u}|\tilde{u}_{x}|^2 dxdt \\
\le& \int_0^{t}\norm{\tilde{u}_{x}}^2dt+\int_0^{t}\norm{\tilde{u}\tilde{u}_{x}}^2dt\le C,
\end{split}
\end{equation*}
where we have used \eqref{pw6-L^4}. Substituting the estimates of $H_1-H_4$ into \eqref{pw6-es2} to get
\begin{equation}\label{pw6-es-2}
\begin{split}
-\int_0^T\sigma\int_{\mathbb{R}}
(\tilde{u}v)\tilde{u}_{xt} dxdt\le \frac{\sigma}{4}\norm{\tilde{u}_{x}}^2+\frac{1}{2}\int_0^{t}\sigma^2\norm{\tilde{u}_t}^2+C\eta\int_0^{t}\sigma^2\norm{\tilde{u}_{xt}}^2dt+C.
\end{split}
\end{equation}
For the last term on the right-hand side of \eqref{pw6-es-ut}, we have
\begin{equation}\label{pw6-es23}
\begin{split}
-\int_0^T\sigma\int_{\mathbb{R}}
v\tilde{u}_{xt} dxdt=&-\int_0^T\left(\sigma\int_{\mathbb{R}}
v\tilde{u}_{x} dx\right)_tdt+\int_0^{\sigma(t)}\int_{\mathbb{R}}
v\tilde{u}_{x} dxdt\\
&+\int_0^T\sigma\int_{\mathbb{R}}
v_t\tilde{u}_{x} dxdt\\
=& I_1+I_2+I_3.
\end{split}
\end{equation}
By Cauchy-Schwarz inequality and \eqref{pw6-L^4}, we can estimate $I_1-I_3$ as
\begin{equation*}
\begin{split}
I_1=&-\sigma\int_{\mathbb{R}}
v\tilde{u}_{x} dx\le \frac{\sigma}{8}\norm{\tilde{u}_{x}}^2+2\sigma\norm{v}^2\le\frac{\sigma}{8}\norm{\tilde{u}_{x}}^2+C,
\end{split}
\end{equation*}

\begin{equation*}
\begin{split}
I_2=\int_0^{\sigma(t)}\int_{\mathbb{R}}
v\tilde{u}_{x} dxdt
&\le  \int_0^{\sigma(t)}\norm{\tilde{u}_{x}}^2dt+C\int_0^{\sigma(t)}\norm{\tilde{u}}^2dt\\
&\le \int_0^{t}\norm{\tilde{u}_{x}}^2dt+C\le C
\end{split}
\end{equation*}
and
\begin{equation*}
\begin{split}
I_3=&\int_0^T\sigma\int_{\mathbb{R}}
v_t\tilde{u}_{x} dxdt =\int_0^T\sigma\int_{\mathbb{R}}
|\tilde{u}_{x}|^2 dxdt\le C.
\end{split}
\end{equation*}
Substituting the estimates of $I_1-I_3$ into \eqref{pw6-es23}, we have
\begin{equation}\label{pw6-es-23}
\begin{split}
-\int_0^T\sigma\int_{\mathbb{R}}
v\tilde{u}_{xt} dxdt\le \frac{\sigma}{8}\norm{\tilde{u}_{x}}^2+C.
\end{split}
\end{equation}
Putting \eqref{pw6-es1}, \eqref{pw6-es-2} and \eqref{pw6-es-23} into \eqref{pw6-es-ut}, we conclude that
\begin{equation}\label{pw6-es6}
\begin{split}
\sigma\norm{\tilde{u}_x}^2+\int_0^T\sigma\norm{\tilde{u}_t}^2dt\le C+C\eta\int_0^{t}\sigma^2\norm{\tilde{u}_{xt}}^2dt.
\end{split}
\end{equation}

Next, in order to obtain the estimate of
$\displaystyle\int_0^T\sigma^2\norm{u_{xt}}^2dt$, differentiating $\eqref{hptr}$ with respect time $t$, we get
\begin{equation}\label{hptr2}
\begin{cases}
\tilde{u}_{tt}-\tilde{u}_{xxt}=(\tilde{u}v)_{xt}+v_{xt},\\
v_{tt}-\tilde{u}_{xt}=0.
\end{cases}
\end{equation}
Multiplying the first equation of $\eqref{hptr2}$ by $\sigma^2\tilde{u}_t$ and the
second by $\sigma^2v_t$, adding the results and integrating it over $\R\times[0, T]$, we have
\begin{equation}\label{pw6-es-utt}
\begin{split}
&\frac{\sigma^2}{2}\norm{\tilde{u}_t}^2+\frac{\sigma^2}{2}\norm{v_t}^2+\int_0^T\sigma^2\norm{\tilde{u}_{xt}}^2dt\\
\le&\int_0^T\sigma\norm{\tilde{u}_t}^2dt+\int_0^T\sigma\norm{v_t}^2dt-\int_0^T\sigma^2\int_{\mathbb{R}}(\tilde{u}v)_t \tilde{u}_{xt}dxdt\\
\le &\int_0^T\sigma\norm{\tilde{u}_t}^2dt+\int_0^T\sigma\norm{\tilde{v}_t}^2dt-\int_0^T\sigma^2\int_{\mathbb{R}}\tilde{u}v_t \tilde{u}_{xt}dxdt-\int_0^T\sigma^2\int_{\mathbb{R}}\tilde{u}_t v \tilde{u}_{xt}dxdt\\
\le &\int_0^T\sigma\norm{\tilde{u}_t}^2dt+\int_0^T\sigma\norm{\tilde{u}_x}^2dt-\int_0^T\sigma^2\int_{\mathbb{R}}\tilde{u}\tilde{u}_x \tilde{u}_{xt}dxdt-\int_0^T\sigma^2\int_{\mathbb{R}}\tilde{u}_t v \tilde{u}_{xt}dxdt\\
=& J_1+J_2+J_3+J_4,
\end{split}
\end{equation}
where we have used the integration by parts and $v_t=\tilde{u}_x$. From \eqref{pw6-L^4} and \eqref{pw6-es6}, we have
\begin{equation*}
\begin{split}
J_1+J_2\le C+C\eta\int_0^{t}\sigma^2\norm{\tilde{u}_{xt}}^2dt.
\end{split}
\end{equation*}
For the $J_3$, one has from Cauchy-Schwarz inequality and  \eqref{pw6-L^4} that
\begin{equation*}
\begin{split}
J_3\le  \frac{1}{4}\int_0^T\sigma^2 \norm{\tilde{u}_{xt}}^2 dt+ \int_0^T\sigma^2\norm{\tilde{u}\tilde{u}_x}^2dt\le C+\frac{1}{4}\int_0^T\sigma^2 \norm{\tilde{u}_{xt}}^2 dt.
\end{split}
\end{equation*}
For $J_4$, by the Sobolev
inequality, Cauchy-Schwarz inequality and \eqref{pw6-L^4}, we have
\begin{equation*}
\begin{split}
J_4\le& \frac{1}{4}\int_0^T\sigma^2 \norm{\tilde{u}_{xt}}^2 dt+ \int_0^T\sigma^2\norm{\tilde{u}_t v}^2dt\\[2mm]
\le& \frac{1}{4}\int_0^T\sigma^2 \norm{\tilde{u}_{xt}}^2 dt+ C\int_0^T\sigma^2\norm{\tilde{u}_t}_{L^\infty}^2\norm{v}^2dt\\[2mm]
\le& \frac{1}{4}\int_0^T\sigma^2 \norm{\tilde{u}_{xt}}^2 dt+ C\int_0^T\sigma^2\norm{\tilde{u}_t}\norm{\tilde{u}_{xt}}\norm{v}^2dt\\[2mm]
\le& \frac{1}{2}\int_0^T\sigma^2 \norm{\tilde{u}_{xt}}^2 dt+ C\int_0^T\sigma^2\norm{\tilde{u}_t}^2dt.
\end{split}
\end{equation*}
Substituting the estimates of $J_1-J_4$ into \eqref{pw6-es-utt}, we get
\begin{equation}\label{pw6-es9}
\begin{split}
\sigma^2\norm{\tilde{u}_t}^2+\sigma^2\norm{v_t}^2+\int_0^T\sigma^2\norm{\tilde{u}_{xt}}^2dt
\le C+C\int_0^T\sigma\norm{\tilde{u}_t}^2dt,
\end{split}
\end{equation}
which updates \eqref{pw6-es6} as
\begin{equation*}
\begin{split}
\sigma\norm{\tilde{u}_x}^2+\int_0^T\sigma\norm{\tilde{u}_t}^2dt\le C+C\eta\int_0^T\sigma\norm{\tilde{u}_t}^2dt.
\end{split}
\end{equation*}
By choosing $\eta$ sufficiently small, we get from the above inequality that
\begin{equation}\label{pw6-es10}
\begin{split}
\sigma\norm{\tilde{u}_x}^2+\int_0^T\sigma\norm{\tilde{u}_t}^2dt\le C,
\end{split}
\end{equation}
which together with \eqref{pw6-es9} yields
\begin{equation*}\label{pw6-es11}
\begin{split}
\sigma^2\norm{\tilde{u}_t}^2+\sigma^2\norm{v_t}^2+\int_0^T\sigma^2\norm{\tilde{u}_{xt}}^2dt
\le C.
\end{split}
\end{equation*}
This, along with \eqref{pw6-es10}, leads immediately to
\eqref{pw6-L^2}. Thus, the proof of Lemma \ref{pw6-h^1 estimate} is completed.
\end{proof}

Now, we can deduce the large-time behavior of $\tilde{u}$.
\begin{lemma}\label{lnn}
Let the conditions of Theorem $\ref{pw6-th}$ hold and let $(\tilde{u}, v)$ be a smooth solution of \eqref{hptr}. Then
it follows that
\begin{equation*}
\begin{split}
\sup\limits_{x\in\mathbb{R}}\abs{\tilde{u}(x,t)}\to
0~~as~~t\to\infty.
\end{split}
\end{equation*}
\end{lemma}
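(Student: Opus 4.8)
The plan is to combine the uniform-in-time $L^2$-bound on $\tilde u$ from Lemma \ref{pw6le2} with the decay of $\|\tilde u_x\|$ and $\|\tilde u_t\|$ obtained from Lemma \ref{pw6-h^1 estimate}, and then pass from $L^2$-decay plus $H^1$-control to $L^\infty$-decay via the Gagliardo--Nirenberg interpolation $\|\tilde u\|_{L^\infty}^2\le 2\|\tilde u\|\,\|\tilde u_x\|$. First I would establish that $\|\tilde u(\cdot,t)\|\to 0$ as $t\to\infty$. From \eqref{pw6-L^4} we know $\int_0^\infty\|\tilde u_x\|^2\,dt<\infty$; to get genuine decay of the $L^2$-norm itself I would show that $\frac{d}{dt}\|\tilde u\|^2$ is integrable in time (or at least that $\|\tilde u\|^2$ is uniformly continuous in $t$ with an integrable-in-time energy identity), which combined with $\|\tilde u\|\in L^2(0,\infty)$ — a consequence of Poincaré-type control $\|\tilde u\|\lesssim \|\tilde u_x\|$ is \emph{not} available on $\R$, so instead I would use the basic energy identity \eqref{pw6-1} and the already-established bounds to show $\|\tilde u\|^2$ has a limit, which must be zero because $\int_0^\infty\|\tilde u_x\|^2\,dt<\infty$ forces $\liminf_{t\to\infty}\|\tilde u_x\|=0$ and hence, through the equation $\tilde u_t=\tilde u_{xx}+(\tilde u v)_x+v_x$ together with $v_t=\tilde u_x$, the mass-type quantity cannot stay bounded away from zero.

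More concretely, the cleanest route: for $t\ge 1$ we have $\sigma(t)=1$, so Lemma \ref{pw6-h^1 estimate} gives the \emph{uniform} bounds $\|\tilde u_x(\cdot,t)\|\le C$, $\|\tilde u_t(\cdot,t)\|\le C$ and, crucially, $\int_1^\infty\|\tilde u_t\|^2\,dt\le C$. I would differentiate $\|\tilde u_x\|^2$ in time: $\frac{d}{dt}\|\tilde u_x\|^2 = 2\int_\R \tilde u_x\tilde u_{xt}\,dx = -2\int_\R \tilde u_{xx}\tilde u_t\,dx$, and using the first equation of \eqref{hptr} to replace $\tilde u_{xx}=\tilde u_t-(\tilde u v)_x-v_x$ this is controlled by $\|\tilde u_t\|^2$ plus lower-order terms already known to be integrable in $t$; hence $\int_1^\infty\left|\frac{d}{dt}\|\tilde u_x\|^2\right|dt<\infty$, so $\|\tilde u_x(\cdot,t)\|$ has a limit as $t\to\infty$, and since $\int_1^\infty\|\tilde u_x\|^2\,dt<\infty$ that limit is $0$, i.e. $\|\tilde u_x(\cdot,t)\|\to 0$. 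Similarly, $\frac{d}{dt}\|\tilde u\|^2 = 2\int_\R \tilde u\tilde u_t\,dx \le \|\tilde u\|^2+\|\tilde u_t\|^2$, and together with $\int_1^\infty(\|\tilde u\|^2+\|\tilde u_t\|^2)\,dt<\infty$ (the $\|\tilde u\|^2$-integrability coming from $\int_0^\infty\int_\R \tilde u^2 v\tilde u_x$ being absolutely integrable via \eqref{pw6-L^4}, or more directly from \eqref{pw6-1} integrated in time which shows $\int_0^\infty\|\tilde u_x\|^2<\infty$ and an energy argument bounding $\int_0^\infty\|\tilde u\|^2$) this gives $\|\tilde u(\cdot,t)\|$ a limit, necessarily $0$.

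Once $\|\tilde u(\cdot,t)\|\to 0$ and $\|\tilde u_x(\cdot,t)\|$ is bounded (indeed also $\to 0$), the Sobolev inequality $\|\tilde u\|_{L^\infty}^2\le 2\|\tilde u\|\,\|\tilde u_x\|$ immediately yields $\sup_{x\in\R}|\tilde u(x,t)|\to 0$, which is the claim. The main obstacle I anticipate is the very first step — extracting honest time-decay of $\|\tilde u\|$ (not merely $\int_0^\infty\|\tilde u\|^2\,dt<\infty$) on the whole line, where no Poincaré inequality is available; the fix is exactly the interplay between the parabolic equation for $\tilde u$ and the transport relation $v_t=\tilde u_x$, which lets one trade a derivative and close a Barbălat-type argument ($\|\tilde u\|^2\in L^1(0,\infty)$ together with $\frac{d}{dt}\|\tilde u\|^2$ bounded below, or the stronger integrability of $\frac{d}{dt}\|\tilde u\|^2$, forces $\|\tilde u\|\to 0$). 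Everything else is a routine application of Lemmas \ref{pw6-le1}--\ref{pw6-h^1 estimate} and the Gagliardo--Nirenberg inequality.
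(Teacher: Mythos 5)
Your overall skeleton (show $\norm{\tilde u_x(\cdot,t)}\to 0$ from time-integrability of $\norm{\tilde u_x}^2$ and of its derivative, then conclude by the interpolation $\norm{\tilde u}_{L^\infty}^2\le 2\norm{\tilde u}\,\norm{\tilde u_x}$) is exactly the paper's argument, but two steps as you wrote them do not go through. First, your claim that $\int_1^\infty\norm{\tilde u}^2\,dt<\infty$, and hence that $\norm{\tilde u(\cdot,t)}_{L^2}\to 0$, is unjustified: none of Lemmas \ref{pw6-le1}--\ref{pw6-h^1 estimate} gives time-integrability of $\norm{\tilde u}^2$ (there is no Poincar\'e inequality on $\R$, and the energy identity \eqref{pw6-1} only yields $\int_0^\infty\norm{\tilde u_x}^2\,dt<\infty$ together with a uniform bound on $\norm{\tilde u}$); your Barb\u{a}lat-type closing of this step therefore has no input. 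Fortunately this whole detour is unnecessary: the Gagliardo--Nirenberg step only needs the product $\norm{\tilde u}\,\norm{\tilde u_x}$ to vanish, and since Lemma \ref{pw6le2} gives $\norm{\tilde u(\cdot,t)}\le C$ uniformly in time, $\norm{\tilde u_x(\cdot,t)}\to 0$ alone suffices — this is precisely how the paper concludes, writing $\tilde u^2(x,t)\le 2\norm{\tilde u}\,\norm{\tilde u_x}\le C\norm{\tilde u_x}\to 0$.

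Second, in proving $\norm{\tilde u_x(\cdot,t)}\to 0$ you rewrite $\frac{d}{dt}\norm{\tilde u_x}^2=-2\int_\R\tilde u_{xx}\tilde u_t\,dx$ and substitute $\tilde u_{xx}=\tilde u_t-(\tilde u v)_x-v_x$, declaring the remaining terms ``lower-order and integrable in $t$''. That is not available: $v_x$ is exactly the quantity for which no uniform (in $\delta$ or in $t$) estimate exists in this low-regularity setting — the paper emphasizes that $L^2$-bounds on $v_x$ are out of reach — so $\int(\tilde u v)_x\tilde u_t\,dx$ and $\int v_x\tilde u_t\,dx$ are not controlled as you claim. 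The repair is to stop at your first identity and bound $\bigl|\frac{d}{dt}\norm{\tilde u_x}^2\bigr|=2\bigl|\int_\R\tilde u_x\tilde u_{xt}\,dx\bigr|\le\norm{\tilde u_x}^2+\norm{\tilde u_{xt}}^2$, noting that Lemma \ref{pw6-h^1 estimate} with $\sigma\equiv 1$ on $[1,\infty)$ gives $\int_1^\infty\norm{\tilde u_{xt}}^2\,dt\le C$ in addition to $\int_1^\infty\norm{\tilde u_x}^2\,dt\le C$ from Lemma \ref{pw6le2}; then $\norm{\tilde u_x}^2$ is in $W^{1,1}(1,\infty)$ and in $L^1(1,\infty)$, hence tends to zero. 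With these two corrections your proof coincides with the paper's.
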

\begin{proof}
\par
From \eqref{pw6-L^4} and \eqref{pw6-L^2}, by the fact $\sigma=1$ for $t\geq1$, we have
\begin{equation*}
\begin{split}
\int_1^\infty\norm{\tilde{u}_{x}}^2dt+\int_1^\infty\norm{\tilde{u}_{xt}}^2dt\le C,
\end{split}
\end{equation*}
which implies that
\begin{equation*}
\norm{\tilde{u}_{x}(\cdot,t)}^2\to 0
~~\text{as}~~t\to\infty.
\end{equation*}
Hence,
\begin{equation*}
\begin{split}
\tilde{u}_{x}^2(x,t)&=2\left|\int^{\infty}_x\tilde{u}\tilde{u}_{x}(y,t)dy\right|\\
             &\leq 2\left(\int_{-\infty}^\infty\tilde{u}^2dy\right)^{1/2}\left(\int_{-\infty}^\infty|\tilde{u}_{x}|^2dy\right)^{1/2}\\
             &\leq C\norm{\tilde{u}_{x}(\cdot,t)}\to 0 ~~\mathrm{as}~~ t\to\infty,
\end{split}
\end{equation*}
which completes the proof.
\end{proof}

\begin{remark}\label{lrem}
It follow from  Lemma \ref{lnn} that  there exists a positive constant $\hat{T}>1$, such that $|\tilde{u}(x,t)|<\frac{1}{2}$ for any $t\geq\hat{T}$.
\end{remark}
We now proceed to derive a uniform (in time) upper bound for $v$.  Motivated by the studies for the Navier-Stokes equations (cf. \cite{Hoff-1992, Hoff-jde, Hoff-arma}), we here introduce the following so-called ``effective viscous flux $F(x,t)$'':
\begin{equation}\label{pw6-evf}
F=\tilde{u}_x+(\tilde{u}+1)v.
\end{equation}
From the first equation of $\eqref{hptr}$, it is easy to see that
\begin{equation}\label{pw6-fut}
F_x=\tilde{u}_t.
\end{equation}

\begin{lemma}\label{pw6-secl}
Assume the conditions of Theorem $\ref{pw6-th}$ hold. Let $(\tilde{u}, v)$ be a smooth solution of \eqref{hptr}. Then there exists a positive constant $C$ independent of $t$  and $\delta$, such that
\begin{equation}\label{pw6-vin}
\norm{v}_{L^\infty} \le C.
\end{equation}
\end{lemma}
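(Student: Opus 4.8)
The plan is to exploit the identity $F_x = \tilde u_t$ together with the second equation $v_t = \tilde u_x$ to write a transport-type equation for $v$ along particle trajectories, and then bound $v$ by Gronwall's inequality (Lemma \ref{le-ine}). First I would rewrite the definition \eqref{pw6-evf} as $\tilde u_x = F - (\tilde u+1)v$, so that the second equation of \eqref{hptr} becomes $v_t = F - (\tilde u+1)v$, i.e. $v_t + (\tilde u+1)v = F$. This is an ODE in $t$ for $v(x,\cdot)$ at each fixed $x$, with the crucial feature that the coefficient $\tilde u+1 = u > 0$ (by the maximum principle / positivity propagated from $u_0>0$, which also gives a uniform positive lower bound, say $u \ge c_0$ for $t \ge \hat T$ using Remark \ref{lrem} where $|\tilde u| < 1/2$). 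Then $|v(x,t)| \le |v_0^\delta(x)| e^{-\int_0^t u\,d\tau} + \int_0^t e^{-\int_s^t u\,d\tau}|F(x,s)|\,ds$, and since $\|v_0^\delta\|_{L^\infty} \le \|v_0\|_{L^\infty}$, everything reduces to an $L^\infty_{x,t}$ bound on the effective viscous flux $F$.

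The heart of the matter is therefore to prove $\sup_{x}|F(x,t)| \le C$ uniformly in $t$ and $\delta$. I would control $F$ through the Sobolev embedding $\|F\|_{L^\infty}^2 \le 2\|F\|\,\|F_x\|$. Here $F_x = \tilde u_t$, so $\|F_x\| = \|\tilde u_t\|$, which is governed by Lemma \ref{pw6-h^1 estimate}: for $t \ge 1$ we have $\sigma = 1$, hence $\|\tilde u_t(\cdot,t)\| \le C$; for $0 < t < 1$ the bound $\sigma^2\|\tilde u_t\|^2 \le C$ degenerates, so one must handle small time separately (see below). For $\|F\|$, note $F = \tilde u_x + (\tilde u+1)v$, and $\|\tilde u_x\| \le C\sigma^{-1/2}$ (Lemma \ref{pw6-h^1 estimate}), while $\|(\tilde u+1)v\| \le (1 + \|\tilde u\|_{L^\infty})\|v\| \le C(1+\|\tilde u\|\,\|\tilde u_x\|)^{1/2}\cdots$ — here one uses \eqref{pw6-L^4} for $\|v\|$ and $\|\tilde u\|_{L^\infty}$. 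So for $t \ge \hat T$ (or even $t \ge 1$) one gets $\|F\|_{L^\infty} \le C$ cleanly. For $t \in [0,\hat T]$ the time-weight makes $\|F\|_{L^\infty}$ possibly blow up like $\sigma^{-3/4}$ or so, but this is \emph{integrable} in $t$, which is exactly what Lemma \ref{le-ine} tolerates: applying that Gronwall lemma to $v_t + u\, v = F$ with $\alpha(t) = u \ge c_0 > 0$ for $t \ge \hat T$, $g = F$, $T_1 = \hat T$, and noting $F \in L^1(0,\hat T)$ (since $\sigma^{-3/4} \in L^1(0,1)$) and $F \in L^\infty(\hat T, T)$, yields $\sup_{0\le t\le T}\|v(\cdot,t)\|_{L^\infty} \le C$ independently of $T$ and $\delta$.

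The main obstacle is the small-time interval $t \in (0,\hat T)$, where the time weight $\sigma(t) = \min\{1,t\}$ degenerates and the pointwise-in-$t$ bounds on $\tilde u_x$ and $\tilde u_t$ are lost. The resolution is precisely that the degeneracy rates are mild enough to keep $\|F(\cdot,t)\|_{L^\infty}$ (hence $g$) in $L^1(0,\hat T)$: from $\sigma\|\tilde u_x\|^2 \le C$ one has $\|\tilde u_x\| \le C t^{-1/2}$, from $\sigma^2\|\tilde u_t\|^2 \le C$ one has $\|F_x\| = \|\tilde u_t\| \le C t^{-1}$, whence $\|F\|_{L^\infty}^2 \le 2\|F\|\,\|F_x\| \le C t^{-1/2}\cdot t^{-1} = C t^{-3/2}$, giving $\|F\|_{L^\infty} \le C t^{-3/4}$, which is indeed integrable near $t = 0$. (One also needs $\|F\| \le C t^{-1/4}$ near $t=0$; this follows by combining $\|\tilde u_x\| \le Ct^{-1/2}$ with the already-bounded $\|(\tilde u+1)v\|$ from \eqref{pw6-L^4}.) Combining the integrable small-time part with the uniform large-time part through Lemma \ref{le-ine} then closes the estimate and proves \eqref{pw6-vin}.
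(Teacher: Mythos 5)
Your proposal is correct and takes essentially the same route as the paper: the effective viscous flux identity $v_t+(\tilde u+1)v=F$ from \eqref{pw6-evf}--\eqref{pw6-fut}, the Gronwall-type Lemma \ref{le-ine} with $T_1=\hat T$ and $\beta=\tfrac12$ (via Remark \ref{lrem}), and control of $F$ through $\norm{F}_{L^\infty}^2\le 2\norm{F}\,\norm{F_x}$ with $F_x=\tilde u_t$ bounded by the time-weighted estimates of Lemmas \ref{pw6le2} and \ref{pw6-h^1 estimate}. The only cosmetic difference is in how the time integrals of $F$ are handled: the paper bounds $\int_0^{\hat T}\norm{F}_{L^\infty}dt$ and $\int_{\hat T}^T\norm{F}_{L^\infty}^4dt$ by H\"older in time (taking $p=4$ in Lemma \ref{le-ine}), whereas you use pointwise-in-time rates $\norm{F}_{L^\infty}\le Ct^{-3/4}$ for small $t$ and $\norm{F}_{L^\infty}\le C$ for $t\ge\hat T$, i.e.\ $p=\infty$, which the proof of Lemma \ref{le-ine} indeed accommodates (the H\"older step with $p'=1$ and $\beta=\tfrac12\le 1$ goes through unchanged).
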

\begin{proof}
It follows from $v_t=\tilde{u}$ that
\begin{equation}\label{pw6-vc}
\begin{split}
v_t+(\tilde{u}+1)v=F.
\end{split}\end{equation}
Taking
\begin{equation*}
\begin{split}
y=v, \ \ \alpha(t)=\tilde{u}+1, \ \ T_1=\hat{T}>1,\ \ \beta=\frac12,\ \
g(t)=F,\ \ p=4
\end{split}\end{equation*}
in Lemma \ref{le-ine}, we have
\begin{equation}\label{pw6-in}
\begin{split}
v\le \norm{v_0}_{L^\infty}+3(\norm{g}_{L^1(0,\hat{T})}+\norm{g}_{L^4(\hat{T},t)}).
\end{split}\end{equation}
For $0\leq t\leq\hat{T}$, we have from the H\"older and Sobolev inequalities
\begin{equation}\label{pw6-f1}
\begin{split}
\int_0^{\hat{T}}\norm{F}_{L^\infty}dt\le& \sqrt{2}\int_0^{\hat{T}}\norm{F}^\frac{1}{2}\norm{F_x}^\frac{1}{2}dt\\
\le& \sqrt{2}\left(\int_0^{\hat{T}}\norm{F}^2dt\right)^\frac{1}{4}\left(\int_0^{\hat{T}}\sigma\norm{F_x}^2dt\right)^\frac{1}{4}
\left(\int_0^{\hat{T}}\sigma^{-\frac{1}{2}}dt\right)^\frac{1}{2}\\
\le& C\left(\int_0^{\hat{T}}\norm{F}^2dt\right)^\frac{1}{4}\left(\int_0^{\hat{T}}\sigma\norm{F_x}^2dt\right)^\frac{1}{4}\left(\int_0^{1}t^{-\frac{1}{2}}dt+\hat{T}-1\right)^\frac{1}{2}\\
\le& C\left(\int_0^{\hat{T}}\norm{F}^2dt\right)^\frac{1}{4}\left(\int_0^{\hat{T}}\sigma\norm{F_x}^2dt\right)^\frac{1}{4}.
\end{split}\end{equation}
Using \eqref{pw6-evf}, \eqref{pw6-L^4}, Sobolev and Cauchy-Schwarz inequalities, we have
\begin{equation*}
\begin{split}
\int_0^{\hat{T}}\norm{F}^2dt\le& C\int_0^{\hat{T}} \left(\norm{\tilde{u}_x}^2+\norm{\tilde{u}v}^2+\norm{v}^2\right)dt\\
\le& C\int_0^{\hat{T}} \left(\norm{\tilde{u}_x}^2+\norm{\tilde{u}}\norm{\tilde{u}_x}\norm{v}^2+\norm{v}^2\right)dt\\
\le& C\int_0^{\hat{T}} \left(\norm{\tilde{u}_x}^2+1\right)dt\le C.\\
\end{split}\end{equation*}
By \eqref{pw6-fut} and \eqref{pw6-L^2}, we get
\begin{equation*}
\begin{split}
\int_0^{\hat{T}}\sigma\norm{F_x}^2dt\le& \int_0^{\hat{T}}\sigma\norm{\tilde{u}_t}^2dt\le C.
\end{split}\end{equation*}
The above two inequalities update \eqref{pw6-f1} as
\begin{equation}\label{pw6-f1f}
\begin{split}
\int_0^{\hat{T}}\norm{F}_{L^\infty}dt\le C.
\end{split}
\end{equation}
For $\hat{T}\le t\le T$, one deduces from Sobolev inequality and \eqref{pw6-fut} that
\begin{equation}\label{pw6-f2}
\begin{split}
\int_{\hat{T}}^t\norm{F}_{L^\infty}^4dt\le& C\int_{\hat{T}}^t\norm{F}^2\norm{F_x}^2dt\\
\le& C\left(\sup_{t\geq \hat{T}}\norm{F}^2\right)\int_{\hat{T}}^t\norm{F_x}^2dt
\le C\left(\sup_{t\geq \hat{T}}\norm{F}^2\right)\int_{\hat{T}}^t\sigma\norm{\tilde{u}_t}^2dt\\
\le&C\left(\sup_{t\geq \hat{T}}\norm{F}^2\right),
\end{split}\end{equation}
where we have used the fact that $\sigma(t)=1$ for  $t\geq \hat{T}>1$. Using \eqref{pw6-evf}, Sobolev and Cauchy-Schwarz inequalities, \eqref{pw6-L^4} and \eqref{pw6-L^2}, we have
\begin{equation}\label{pw6-f2-0}
\begin{split}
\sup_{t\geq \hat{T}}\norm{F}^2 \le& C\sup_{t\geq \hat{T}}\left(\norm{\tilde{u}_x}^2+\norm{\tilde{u}v}^2+\norm{v}^2\right)\\
\le& C\sup_{t\geq \hat{T}} \left(\norm{\tilde{u}_x}^2+\norm{\tilde{u}}\norm{\tilde{u}_x}\norm{v}^2+\norm{v}^2\right)\\
\le& C\sup_{t\geq \hat{T}} \left(\sigma\norm{\tilde{u}_x}^2+1\right)\le C,
\end{split}\end{equation}
where in the last inequality we have used $\sigma(t)=1$ for  $t\geq \hat{T}$ again. This together with \eqref{pw6-f2} gives
$
\int_{\hat{T}}^t\norm{F}_{L^\infty}^4dt\le C,
$
which along with  \eqref{pw6-in} and \eqref{pw6-f1f} gives \eqref{pw6-vin}.
\end{proof}

\begin{lemma}\label{pw6-v66} Let the assumptions in Theorem $\ref{pw6-th}$ hold. Then there exists a positive constant $C$ independent of $t$  and $\delta$, such that
\begin{equation}\label{pw6-vcf}
 \sup\limits_{t\in[0,T]}\int_{\R}v^{6}dx+\int_{0}^t\int_{\R}v^{6}dxdt\le C.
\end{equation}
\end{lemma}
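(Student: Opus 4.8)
The plan is to obtain a closed differential inequality for $\int_{\R} v^6\,dx$ and then close it using the uniform bounds already established, in particular the $L^\infty$-bound on $v$ from Lemma \ref{pw6-secl} and the effective-viscous-flux identities \eqref{pw6-evf}--\eqref{pw6-fut}. First I would multiply the evolution equation \eqref{pw6-vc} for $v$, namely $v_t+(\tilde u+1)v=F$, by $v^5$ and integrate over $\R$. This gives
\begin{equation*}
\frac16\frac{d}{dt}\int_{\R} v^6\,dx+\int_{\R}(\tilde u+1)v^6\,dx=\int_{\R} F v^5\,dx.
\end{equation*}
By Remark \ref{lrem}, for $t\ge \hat T$ we have $\tilde u+1\ge \tfrac12$, so the second term on the left controls $\tfrac12\int_{\R} v^6\,dx$; for $0\le t\le\hat T$ the term $\int_{\R}(\tilde u+1)v^6\,dx$ is nonnegative anyway (since $u=\tilde u+1>0$) and can simply be dropped. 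Hence the only thing to control is the right-hand side $\int_{\R} F v^5\,dx$.

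For that term I would use Young's inequality to write $\int_{\R} F v^5\,dx\le \varepsilon\int_{\R} v^6\,dx+C_\varepsilon\int_{\R} |F|^6\,dx$, or, using the $L^\infty$-bound $\norm{v}_{L^\infty}\le C$ from Lemma \ref{pw6-secl}, the even simpler bound $\int_{\R} Fv^5\,dx\le C\norm{v}_{L^\infty}^4\int_{\R}|F||v|\,dx\le C\int_{\R}(F^2+v^2)\,dx$. Either way I am reduced to integrating $\int_{\R} F^2\,dx$ (resp. $\int_{\R}|F|^6\,dx$) in time. Using the definition \eqref{pw6-evf}, $F=\tilde u_x+(\tilde u+1)v$, together with the Sobolev inequality $\norm{f}_{L^\infty}^2\le 2\norm{f}\norm{f_x}$, the bound $\norm{v}_{L^\infty}\le C$, and the a priori estimates \eqref{pw6-L^4} and \eqref{pw6-L^2}, one gets $\int_0^T\norm{F}^2\,dt\le C$ and $\sup_{t\ge\hat T}\norm{F}^2\le C$ exactly as in \eqref{pw6-f2-0}; combined with $F_x=\tilde u_t$ and $\int_0^T\sigma\norm{\tilde u_t}^2\,dt\le C$ this also gives whatever higher integrability of $F$ in time is needed. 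Feeding these into the differential inequality
\begin{equation*}
\frac{d}{dt}\int_{\R} v^6\,dx+c\int_{\R} v^6\,dx\le C\,h(t),\qquad t\ge\hat T,
\end{equation*}
with $h\in L^1(\hat T,\infty)$ (or applying Lemma \ref{le-ine} with $y=\int_{\R}v^6dx$, $\alpha=$ the coefficient coming from $\tilde u+1$, $\beta=\tfrac12$, $T_1=\hat T$), yields both the uniform-in-$t$ bound $\sup_{t\in[0,T]}\int_{\R}v^6\,dx\le C$ and, after integrating the inequality in time, the space-time bound $\int_0^T\int_{\R}v^6\,dx\,dt\le C$. On $[0,\hat T]$ one just uses Gronwall directly together with $\int_0^{\hat T}\norm{F}_{L^\infty}\,dt\le C$ from \eqref{pw6-f1f} (or the $L^2$ bound on $F$ plus $\norm v_{L^\infty}\le C$) to bound $\sup_{[0,\hat T]}\int_{\R}v^6\,dx$, which also forces $\int_0^{\hat T}\int_{\R}v^6\,dx\,dt\le \hat T\sup_{[0,\hat T]}\int_{\R}v^6\,dx\le C$.

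I expect the main obstacle to be purely bookkeeping rather than conceptual: one must be careful that every estimate of $F$ (and of $\int v^6$) is uniform in the mollification parameter $\delta$, which is automatic here because every ingredient — \eqref{pw6-L^4}, \eqref{pw6-L^2}, \eqref{pw6-vin}, \eqref{pw6-f1f}, \eqref{pw6-f2-0} — is already uniform in $\delta$. A secondary subtlety is that near $t=0$ one does not have an $L^2$-bound on $\tilde u_x$ without the weight $\sigma$, so the split at $t=\hat T$ (or at $t=1$) in the spirit of Lemma \ref{le-ine} is essential: on $[0,\hat T]$ one pays with the integrable singularity $\sigma^{-1/2}=t^{-1/2}\in L^1(0,1)$, exactly as in \eqref{pw6-f1}, while on $[\hat T,\infty)$ the positive coefficient $\tilde u+1\ge\tfrac12$ provides genuine dissipation of $\int v^6$. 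No new idea beyond those already used in Lemmas \ref{pw6-secl} is needed; the argument is a sixth-power analogue of the $L^\infty$-estimate for $v$.
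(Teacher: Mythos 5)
Your proposal is correct and follows essentially the same route as the paper: multiply \eqref{pw6-vc} by $v^5$, split time at $\hat T$ using Remark \ref{lrem}, absorb $\int_{\R} Fv^5dx$ by Young's inequality against $\int_{\R} v^6dx$ plus $\int_{\R}|F|^6dx$, bound $\int_{\hat T}^t\norm{F}_{L^6}^6\,dt$ by interpolation ($\norm{F}_{L^6}^6\le\norm{F}^4\norm{F_x}^2$) from $\sup_{t\ge\hat T}\norm{F}^2\le C$ as in \eqref{pw6-f2-0} and $\int\norm{F_x}^2dt=\int\norm{\tilde u_t}^2dt\le C$, and on $[0,\hat T]$ use $\norm{v}_{L^\infty}\le C$ together with $\norm{v}\le C$ (the paper does this directly via $\int_{\R} v^6dx\le \norm{v}_{L^\infty}^4\norm{v}^2$, no Gronwall needed). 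One caveat: only the $|F|^6$-variant of Young's inequality closes the space--time term, since your ``even simpler'' bound $\int_{\R} Fv^5dx\le C\int_{\R}(F^2+v^2)dx$ and the claim $\int_0^T\norm{F}^2dt\le C$ are not uniform in $T$ (because $\int_{\hat T}^T\norm{v}^2dt$ may grow linearly in $T$), so that alternative would at best give the sup bound but not $\int_0^T\int_{\R} v^6dxdt\le C$.
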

\begin{proof}
Multiplying \eqref{pw6-vc} by $v^{5}$ and integrating the resulting equality over $\R$, one has
\begin{equation}\label{pw6-vc1}
\begin{split}
\frac1{6} \left(\int_{\R}v^{6}dx\right)_t+\int_{\R}(\tilde{u}+1)v^{6}dx=\int_{\R}Fv^{5}dx.
\end{split}\end{equation}
Integrating the above equality over $[\hat{T}, t)$, we have
\begin{equation}\label{pw6-vc2}
\begin{split}
\frac1{6}\int_{\R}v^{6}dx+\frac12\int_{\hat{T}}^t\int_{\R}v^{6}dxdt\le \sup\limits_{t\in[0,\hat{T}]}\left(\frac1{6}\int_{\R}v^{6}dx\right)+ \int_{\hat{T}}^t\int_{\R}|F||v|^{5}dxdt,
\end{split}\end{equation}
where we have used Remark \ref{lrem}. We need to further estimate the last term in \eqref{pw6-vc2}. By the Young
inequality, we have that
\begin{equation}\label{pw6-young}
\begin{split}
\int_{\hat{T}}^t\int_{\R}|F||v|^{5}dxdt\le \frac 14\int_{\hat{T}}^t\int_{\R}v^{6}dxdt+C\int_{\hat{T}}^t\int_{\R}|F|^{6}dxdt,
\end{split}\end{equation}
which updates \eqref{pw6-vc2} as
\begin{equation}\label{pw6-vc3}
\begin{split}
\frac1{6}\int_{\R}v^{6}dx+\frac14\int_{\hat{T}}^t\int_{\R}v^{6}dxdt\le \sup\limits_{t\in[0,\hat{T}]}\left(\frac1{6}\int_{\R}v^{6}dx\right)+ C\int_{\hat{T}}^t\int_{\R}|F|^{6}dxdt.
\end{split}\end{equation}
For the first term on the right hand side of \eqref{pw6-vc3}, we have from \eqref{pw6-L^4} and \eqref{pw6-vin} that
\begin{equation*}
\begin{split}
\sup\limits_{t\in[0,\hat{T}]}\left(\frac1{6}\int_{\R}v^{6}dx\right)\le C\norm{v}_{L^\infty}^4\norm{v}^2\le C.
\end{split}\end{equation*}
It follows from Gagliardo-Nirenberg inequality, \eqref{pw6-fut} and \eqref{pw6-f2-0} that
\begin{equation}\label{pw6-F6}
\begin{split}
\int_{\hat{T}}^t\norm{F}_{L^6}^6dt\le& \int_{\hat{T}}^t\norm{F}^4\norm{F_x}^2dt
\le\left(\sup_{t\geq \hat{T}}\norm{F}^4\right)\int_{\hat{T}}^t\norm{F_x}^2dt\\
\le&\left(\sup_{t\geq \hat{T}}\norm{F}^4\right)\int_{\hat{T}}^t\sigma\norm{\tilde{u}_t}^2dt
\le C,
\end{split}\end{equation}
where we have used the fact that $\sigma(t)=1$ for  $t\geq \hat{T}>1$. Substituting the above two inequalities into \eqref{pw6-vc3}, we obtain
\begin{equation*}
\begin{split}
 \sup\limits_{t\in[\hat{T},\infty]}\int_{\R}v^{6}dx+\int_{\hat{T}}^t\int_{\R}v^{6}dxdt\le C.
\end{split}\end{equation*}
For $0\leq t\leq\hat{T}$, we have from \eqref{pw6-L^4} and \eqref{pw6-vin} that
\begin{equation*}
\begin{split}
 &\sup\limits_{t\in[0,\hat{T}]}\int_{\R}v^{6}dx+\int_0^{\hat{T}}\int_{\R}v^{6}dxdt\\
 \le& \sup\limits_{t\in[0,\hat{T}]}\norm{v}_{L^\infty}^4\int_{\R}v^{2}dx+\hat{T}\sup\limits_{t\in[0,\hat{T}]}\norm{v}_{L^\infty}^4\int_{\R}v^{2}dx\le C.
\end{split}\end{equation*}
By coupling the above two inequalities together yields \eqref{pw6-vcf} and completes the proof.
\end{proof}
\subsection{Proof of Theorem $\ref{pw6-th}$}
We now prove Theorem \ref{pw6-th}. It first follows from  Lemmas \ref{pw6le2}-\ref{pw6-v66} that
\begin{equation}\label{pw6-le-last}
\begin{cases}
\begin{split}
&\norm{u^\delta-1}^2+\norm{u^\delta-1}_{L^4}^4+\norm{v^\delta}^2+\int_0^T\norm{u^\delta_x}^2+\norm{(u^\delta-1)u^\delta_x}^2 dt\leq C,\\
&\sigma\norm{u^\delta_x}^2+\sigma^2\norm{u^\delta_t}^2+\sigma^2\norm{v^\delta_t}^2+\int_0^T\sigma\norm{u^\delta_t}^2dt+\int_0^T\sigma^2\norm{u^\delta_{xt}}^2dt\le C,\\
&\|v^\delta\|_{L^\infty}+\|v^\delta\|_{L^6}^6+\int_{0}^t\|v^\delta\|_{L^6}^6dt\le C,
\end{split}
\end{cases}
\end{equation}
which gives
\begin{equation}\label{1-le-last}
\begin{cases}
u^\delta-1\in L^\infty([0,\infty), L^2(\mathbb{R})), \ \ \ \ \ (v^\delta_t, \nabla{u}^\delta)\in L^2([0,\infty), L^2(\mathbb{R})), \\[2mm]
u^\delta-1\in L^\infty((0,\infty),W^{1,2}(\mathbb{R})),\ \ \ u^\delta_t\in L^2((0,\infty),H^1(\mathbb{R})),\\[2mm]
v^\delta\in L^\infty([0,\infty); L^2(\mathbb{R})\cap L^{\infty}(\mathbb{R}))\cap L^6([0,\infty); L^6(\mathbb{R})).
\end{cases}
\end{equation}
By \eqref{1-le-last} and the Aubin-Lions-Simon lemma, we can extract a subsequence, still denoted by $(u^\delta, v^\delta)$, such that the following convergence hold as $\delta\to 0$
\begin{equation*}
\begin{cases}
v^\delta(\cdot,t)\to {\bf v}\ \text{strongly  in} \ C([0,\infty), H^{-1}(\mathbb{R})), \\[2mm]
u^\delta(\cdot,t)\to u(\cdot,t)\ \text{strongly  in} \ C((0,\infty), C(\mathbb{R})),\\[2mm]
u_x^\delta(\cdot,t)\to u_x(\cdot,t)\ \text{weakly  in} \ L^2([0,\infty),L^2(\mathbb{R})).
\end{cases}
\end{equation*}
Thus, it is easy to show that the limit function $(u, v)$  is indeed a weak solution of the system  \eqref{hp}-\eqref{Boundary} and inherits all the bounds of \eqref{pw6-le-last}. Thus, \eqref{pw6-le-last0} is proved.

To complete the proof of  Theorem $\ref{pw6-th}$,
we only need to prove $\eqref{pw6-lb}$. On the other hand, by \eqref{pw6-vc1}, we have
\begin{equation*}
\begin{split}
\int_{\hat{T}}^t\abs{\left(\norm{v}_{L^6}^6\right)_t}dt\le& C\int_{\hat{T}}^t\int_{\R}(\tilde{u}+1)v^{6}dxdt+C\int_{\hat{T}}^t\int_{\R}Fv^{5}dxdt\\
\le& C\int_{\hat{T}}^t\int_{\R}v^{6}dxdt+C\int_{\hat{T}}^t\int_{\R}|F||v|^{5}dxdt,
\end{split}\end{equation*}
where the boundedness of $\tilde{u}$ for $t>\hat{T}$ has been used (see Remark \ref{lrem}). This together with \eqref{pw6-young}, \eqref{pw6-F6} and \eqref{pw6-vcf} implies
\begin{equation}\label{pw6-vct}
\begin{split}
\int_{\hat{T}}^t\abs{\left(\norm{v}_{L^6}^6\right)_t}dt\le C.
\end{split}\end{equation}
Combining \eqref{pw6-vcf} with \eqref{pw6-vct} leads to
$
\norm{v}_{L^6}\to 0
~~\text{as}~~t\to\infty,
$
which together with the interpolation inequality, \eqref{pw6-L^4} and \eqref{pw6-vin} implies
\begin{equation*}
\norm{v}_{L^p}\to 0
~~\text{as}~~t\to\infty,\ \ 2<p<\infty.
\end{equation*}
This along with Lemma \ref{lnn} gives $\eqref{pw6-lb}$ and hence completes the proof of Theorem $\ref{pw6-th}$.

\section{Proof of Theorem $\ref{u_+=0}$ }

In this section, we prove the nonlinear stability of the traveling wave solution of (\ref{hp})-(\ref{Boundary}) with discontinuous initial data having large oscillations. The main result is that the solution of (\ref{hp})-(\ref{Boundary}) approaches the traveling wave solution $(U,V)(x-st)$, properly translated by an amount $x_0$, i.e.,
\begin{equation*}
\sup\limits_{x\in\R}\abs{(u,v)(x,t)-(U,V)(x+x_0-st)}\to 0,~~\mathrm{as}~~ t\to+\infty,
\end{equation*}
where $x_0$ satisfies the following identity derived from the ``conservation of mass'' principle
\begin{eqnarray*}
\int_{-\infty}^{+\infty}
\begin{pmatrix}
u_0(x)-U(x)\\
v_0(x)-V(x)
\end{pmatrix}
dx=x_0
\begin{pmatrix}
u_+-u_-\\
v_+-v_-
\end{pmatrix}+\beta r_1(u_-,v_-),
\end{eqnarray*}
where $r_1(u_-,v_-)$ denotes the first right eigenvector of the Jacobian matrix of (\ref{hp}) with in the absence of viscous terms evaluated at $(u_-,v_-)$, see details in \cite{smoller}. The coefficient $\beta$ yields the diffusion wave in general. Both $\beta$ and $x_0$ will be uniquely determined by the initial data $(u_0, v_0)$. For the stability of small-amplitude
shock waves of conservation laws with diffusion wave, $i.e.\ \beta\neq0 $, we refer to \cite{Liu09, sz93} . In the present paper, we will neglect the diffusion wave by assuming $\beta=0$ and we consider the stability of large-amplitude waves with large discontinuous data. Then by the conservation laws (\ref{hp}), we obtain that 
\begin{eqnarray}\label{p}
\begin{aligned}
&\int_{-\infty}^{+\infty}\bigg(\begin{array}{lll}\begin{aligned}
u(x,t)-U(x+x_0-st)\\v(x,t)-V(x+x_0-st)
\end{aligned}
\end{array}\bigg)dx
=\int_{-\infty}^{+\infty}\bigg(\begin{array}{lll}\begin{aligned}
u_0(x)-U(x+x_0)\\v_0(x)-V(x+x_0)
\end{aligned}
\end{array}\bigg)dx\\
&=\int_{-\infty}^{+\infty}\bigg(\begin{array}{lll}\begin{aligned}
u_0(x)-U(x)\\v_0(x)-V(x)
\end{aligned}
\end{array}\bigg)dx
+ \int_{-\infty}^{+\infty}\bigg(\begin{array}{lll}\begin{aligned}
U(x)-U(x+x_0)\\V(x)-V(x+x_0)
\end{aligned}
\end{array}\bigg)dx\\
&=\int_{-\infty}^{+\infty}\bigg(\begin{array}{lll}\begin{aligned}
u_0(x)-U(x)\\v_0(x)-V(x)
\end{aligned}
\end{array}\bigg)dx-x_0\bigg(\begin{array}{lll}\begin{aligned}u_+-u_-\\v_+-v_-
\end{aligned}
\end{array}\bigg).
\end{aligned}
\end{eqnarray}
This together with $\beta=0$ implies the zero integral of the initial perturbation
\begin{equation}\label{initial condition}
\D \int_{-\infty}^{+\infty}\bigg(\begin{array}{lll}
u_0(x)-U(x+x_0)
\\v_0(x)-V(x+x_0)
\end{array}\bigg) dx=
\bigg(\begin{array}{lll}
0\\
0
\end{array}\bigg).
\end{equation}
Then we  employ the technique of taking anti-derivative to decompose the solution as
\begin{equation}\label{2-5}
(u,v)(x,t)=(U,V)(x+x_0-st)+(\phi_x,\psi_x)(x,t).
\end{equation}
 That is
\begin{eqnarray*}
\begin{aligned}
(\phi(x,t), \psi(x,t)) =\int_{-\infty}^x (u(y,t)-U(y+x_0-st),v(y,t)-V(y+x_0-st))dy\\
\end{aligned}
\end{eqnarray*}
for $(x,t)\in \mathbb{R}\times \mathbb{R}_+ $. It then follows from (\ref{p}) that
\begin{equation*}
\phi(\pm\infty,t)=\psi(\pm\infty,t)=0, \ \ \mathrm{for \ all} \ \ t>0.
\end{equation*}
The initial perturbation $(\phi_0, \psi_0)(x)=(\phi(x,0), \psi(x,0))$ is thus given by
\begin{equation*}
(\phi_0, \psi_0)(x)=-\int_x^{\infty} (u_0(y)-U(y+x_0),
v_0(y)-V(y+x_0))dy,
\end{equation*}
which satisfies $(\phi_0, \psi_0)(\pm\infty)=0$ by the assumption \eqref{initial condition}.

Substituting $\eqref{2-5}$ into (\ref{hp}), using  $\eqref{traveling
wave equation}$ and integrating the system with respect to $x$, we
obtain that ~$(\phi,\psi)(x,t)$~satisfies
\begin{equation}\label{nonlinear system0}
\begin{cases}
\phi_t=D\phi_{xx}+\chi V\phi_x+\chi U\psi_x+\chi \phi_x\psi_x,~~t>0, ~~x\in\R,\\
\psi_t=\phi_x,
\end{cases}
\end{equation}
with initial perturbation
\begin{equation*}
(\phi_0, \psi_0)(x)=-\int_x^{\infty} (u_0(y)-U(y+x_0),
v_0(y)-V(y+x_0))dy,
\end{equation*}
and
\begin{equation}\label{ini0}
(\phi_0(x), \psi_{0}(x) )\in H^1(\mathbb{R}), \ \ \    \psi_{0x}(x)\in L^\infty(\mathbb{R}).
\end{equation}

We denote
\begin{equation*}
\begin{split}
X[0,T]:=\{&(\phi(x,t),\psi(x,t))\big|\phi\in L^\infty([0,T]; H^1), \phi_x\in L^\infty([0,T]; L^2)\cap L^2([0,T];H^1),    \\&\psi\in L^\infty([0,T]; H^1),\psi_x \in L^\infty([0,T]; L^2\cap L^\infty)\cap L^2([0,T];L^2)
\end{split}\end{equation*}
and
\begin{equation}\label{c0}
C_0:=\|\psi_0\|_1^2+\|\phi_0\|_{1}^2.
\end{equation}

\begin{definition} We say that $(\phi, \psi)$ is a weak solution of \eqref{nonlinear system0}-\eqref{ini0}, if $(\phi,\psi)\in X[0,\infty)$, and for all test functions $\Psi\in C_0^\infty(\mathbb{R}\times [0, \infty))$ satisfy that
\begin{equation}\label{de-weak1}
\begin{split}
\int_{\mathbb{R}}\phi_0\Psi_0(x)dx+\int_0^\infty\int_{\mathbb{R}}\left(\phi\Psi_t+D\phi_x\Psi_x\right)dxdt=\chi\int_0^\infty\int_{\mathbb{R}}\left( V\phi_x+ U\psi_x+\phi_x\psi_x\right)\Psi dxdt
\end{split}
\end{equation}
and
\begin{equation}\label{de-weak2}
\begin{split}
\int_{\mathbb{R}}\psi_0\Psi_0(x)dx+\int_0^\infty\int_{\mathbb{R}}\left(\psi\Psi_t-\phi_x\Psi\right)dxdt=0.
\end{split}
\end{equation}
\end{definition}
For the problem \eqref{nonlinear system0}-\eqref{ini0}, we have
the following results.
\begin{theorem}\label{global existence}
Let $u_+>0$ and the initial data satisfy \eqref{ini0}. There exists a  constant $\varepsilon>0$, such that if $C_0\leq\varepsilon$,
then the  problem \eqref{nonlinear system0}-\eqref{ini0} has a global weak solution in the sense of \eqref{de-weak1}-\eqref{de-weak2} satisfying
\begin{equation}\label{le-last}
\begin{cases}
\begin{split}
&\|\phi\|_{1}^2+\|\psi\|_1^2+\int_0^T\left(\|\phi_x(t)\|_{1}^2+\|\psi_x(t)\|^2\right)dt
\leq CC_0,\\[2mm]
&\sigma\int_{\R}({\phi^2_{t}}+\phi^2_{xx}) dx+\int_0^T\int_{\R}\sigma\phi^2_{xt}dxdt\le CC_0,\\[2mm]
&\sup_{t\in[0,\infty)}\|\psi(\cdot,t)_x\|_{L^\infty}\le C,
\end{split}\end{cases}
\end{equation}
where $\sigma=\sigma(t)=\min\{1,t\}$. Moreover, it follows that
\begin{equation}\label{long-time behavior}
\begin{split}
&\sup\limits_{x\in\mathbb{R}}\abs{\phi_x(x,t)}\to
0~~as~~t\to\infty,\\
&\sup\limits_{x\in\mathbb{R}} \norm{\psi_x(x,t)}_{L^p} \to
0~{\rm for~all}~2\le p<\infty~~as~~t\to\infty.
\end{split}
\end{equation}
\end{theorem}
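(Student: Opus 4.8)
The plan is to mimic the strategy used for Theorem \ref{pw6-th}: construct approximate solutions via mollified initial data $(\phi_0^\delta,\psi_0^\delta)$, establish a priori estimates uniform in the mollification parameter $\delta$, and then pass to the limit using the Aubin–Lions–Simon lemma. Since $(\phi_0,\psi_0)\in H^1$ already, the situation here is slightly better than in the constant-state case for $\phi$ (we can hope for full $H^1$ control of $\phi$ without a time weight), but the component $\psi_x$ still only starts in $L^2\cap L^\infty$ and the second equation $\psi_t=\phi_x$ is hyperbolic, so we must again avoid ever touching $\|\psi_{0xx}\|$. The novelty compared with \cite{jin13,Li09,Lij13} is that no smallness of $\|\psi_{0x}\|_{L^\infty}$ or $\|\phi_{0x}\|_{L^\infty}$ is assumed — only the $H^1$ quantity $C_0$ is small — so the $L^\infty$-type bounds must be obtained \emph{a posteriori} via the effective viscous flux, not bootstrapped from the data.

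The steps, in order, are: (1) \textbf{Basic energy estimate.} Multiply the first equation of \eqref{nonlinear system0} by $\phi$ and the second by $\chi U\psi$ (or a suitable weighted combination exploiting $\psi_t=\phi_x$), integrate, and use $U'<0$, $V'>0$ from Proposition \ref{etw} together with the smallness of $C_0$ and a Sobolev bound on $\phi_x$ to absorb the cubic term $\chi\phi_x\psi_x$; this yields $\|\phi\|^2+\|\psi\|^2+\int_0^T(\|\phi_x\|^2+\|\psi_x\|^2)\,dt\le CC_0$. The monotonicity of the wave and a Poincaré-type or weighted inequality for the shock profile are what make the $\psi$-dissipation appear — this is the classical antiderivative trick for viscous shocks. (2) \textbf{First-order estimate for $\phi$ with the time weight.} Multiply the differentiated equation by $\sigma\phi_t$ and $\sigma^2\phi_{tt}$ as in Lemma \ref{pw6-h^1 estimate}, using $\sigma=\min\{1,t\}$ to kill the uncontrolled $\|\phi_{0xx}\|$ (which is $O(1/\delta)$); this gives $\sigma(\|\phi_t\|^2+\|\phi_{xx}\|^2)+\int_0^T\sigma\|\phi_{xt}\|^2dt\le CC_0$. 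Actually, because $\phi_0\in H^1$ we may even get the un-weighted $\|\phi_x\|^2\le CC_0$ from step (1) combined with parabolic smoothing, but the weight is still needed for the second derivative. (3) \textbf{Effective viscous flux and $L^\infty$ bound for $\psi_x$.} Define $F:=D\phi_{xx}+\chi V\phi_x+\chi U\phi_x+\chi\phi_x\psi_x$ (the natural flux so that $F_x=\phi_t$ up to lower-order wave terms, analogous to \eqref{pw6-evf}–\eqref{pw6-fut}), rewrite $\psi_x$'s evolution as $(\psi_x)_t + (\text{positive coefficient})\psi_x = F + (\text{controlled})$, and apply the Gronwall-type Lemma \ref{le-ine} with $p=4$, exactly as in Lemma \ref{pw6-secl}, to get $\sup_t\|\psi_x\|_{L^\infty}\le C$ once $\int_0^{\hat T}\|F\|_{L^\infty}dt$ and $\int_{\hat T}^t\|F\|_{L^\infty}^4dt$ are bounded via the Gagliardo–Nirenberg inequality, $F_x=\phi_t$, and the estimates of steps (1)–(2). (4) \textbf{Large-time decay.} From the integrability $\int_1^\infty(\|\phi_{xx}\|^2+\|\phi_{xt}\|^2)\,dt<\infty$ conclude $\|\phi_{xx}(\cdot,t)\|\to0$, hence $\sup_x|\phi_x(x,t)|^2\le C\|\phi_x\|\,\|\phi_{xx}\|\to0$ (using $\|\phi_x\|$ bounded); for $\psi_x$, derive an $L^6$-in-space estimate by multiplying the $\psi_x$-equation by $(\psi_x)^5$ as in Lemma \ref{pw6-v66}, show $\|\psi_x\|_{L^6}\to0$ using boundedness of $\int_{\hat T}^t|(\|\psi_x\|_{L^6}^6)_t|\,dt$, and interpolate with the $L^2$ and $L^\infty$ bounds to get $\|\psi_x\|_{L^p}\to0$ for all $2\le p<\infty$. (5) \textbf{Passage to the limit.} Use the uniform bounds to invoke Aubin–Lions–Simon: $\phi^\delta\to\phi$ strongly in $C((0,\infty);C(\mathbb{R}))$, $\psi^\delta\to\psi$ strongly in $C([0,\infty);H^{-1}(\mathbb{R}))$, $\phi_x^\delta\rightharpoonup\phi_x$ weakly in $L^2$, which suffices to pass to the limit in the nonlinearity $\phi_x^\delta\psi_x^\delta$ (strong convergence of one factor, weak of the other, against a test function) and identify the weak solution in the sense \eqref{de-weak1}–\eqref{de-weak2}.

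The main obstacle I expect is \textbf{step (1), closing the basic energy estimate without smallness of the $L^\infty$ norms of the data}: the cubic term $\chi\int\phi_x\psi_x\phi\,dx$ (and its analogues in the higher-order steps) must be controlled by the parabolic dissipation $\|\phi_{xx}\|$ and the wave-induced dissipation of $\psi_x$, and the only small quantity available is $C_0$; one must therefore design the energy functional so that a Sobolev bound $\|\phi_x\|_{L^\infty}\le C\|\phi_x\|^{1/2}\|\phi_{xx}\|^{1/2}\le C C_0^{1/4}\|\phi_{xx}\|^{1/2}$ feeds back with a small constant, which requires the a priori bound $\|\phi_x\|^2\le CC_0$ to be part of a bootstrap/continuity argument rather than assumed. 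A secondary difficulty is that the effective-flux coefficient in step (3) involves $\tilde u+1=U+\phi_x$, and one needs $U+\phi_x$ bounded below by a positive constant for $t\ge\hat T$; since $U\ge u_+>0$ and $\sup_x|\phi_x|\to0$ (step (4), which must be proved first in the region $t\ge\hat T$ using only steps (1)–(2)), this holds, but the ordering of the argument — decay of $\phi_x$ before the $L^\infty$ bound on $\psi_x$ — must be arranged carefully, exactly as Lemma \ref{lnn} and Remark \ref{lrem} precede Lemma \ref{pw6-secl} in Section 4.
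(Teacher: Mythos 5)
Your overall architecture coincides with the paper's: mollify $(\phi_0,\psi_0)$, close a small-$C_0$ bootstrap (the paper's a priori assumption \eqref{assup}) for a weighted energy built on the monotone wave profile and the sign identity \eqref{zero}, use the time weight $\sigma=\min\{1,t\}$ only at the level of $\phi_t,\phi_{xx},\phi_{xt}$ so that $\|\phi_{0xx}\|$ never enters, prove $\sup_x\abs{\phi_x}\to0$ first so that $u=U+\phi_x\ge \frac{u_+}{2}$ for $t\ge\hat T$, then obtain the uniform bound on $\psi_x$ from the ODE $\psi_{xt}+\frac{\chi}{D}u\,\psi_x=\frac1D(\phi_t-\chi V\phi_x)$ via the Gronwall-type Lemma \ref{le-ine}, and finally pass to the limit by Aubin--Lions--Simon. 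Note that in this setting no genuine ``effective viscous flux'' is needed: your $F$ (modulo a typo) is exactly $\phi_t$, and the paper simply takes $g=C\norm{\phi_t}_{L^\infty}+C\norm{\phi_x}_{L^\infty}$ with $p=2$ in Lemma \ref{le-ine}, estimating these from the $\sigma$-weighted bounds.

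Three points in your plan would not survive as written. First, the zeroth-order energy (your step (1)) does not by itself yield $\int_0^T\norm{\psi_x}^2dt\le CC_0$: the weight $1/U$ on the $\phi$-equation together with \eqref{zero} produces only the $\phi_x$-dissipation (with an error $C\kappa_0\int_0^T\norm{\psi_x}^2dt$, cf. \eqref{L^2 estimate}); the $\psi_x$-dissipation is generated separately by multiplying the first equation of \eqref{nonlinear system0} by $\psi_x$ and using $\psi_{xt}=\phi_{xx}$ (the paper's claim \eqref{eqn-2.17}), which costs a term $\kappa_0\int_0^T\norm{\phi_{xx}}^2dt$ and therefore must be closed jointly with the $H^1$ estimate, not before it. Second, you cannot deduce $\norm{\phi_{xx}(\cdot,t)}\to0$ from $\int_1^\infty\norm{\phi_{xx}}^2dt<\infty$, since no control of $\phi_{xxt}$ is available; the correct (and the paper's) pairing is $\sup_x\abs{\phi_x}^2\le 2\norm{\phi_x}\norm{\phi_{xx}}$ with $\norm{\phi_{xx}}$ merely bounded for $t\ge1$ (from $\sigma\norm{\phi_{xx}}^2\le CC_0$) and $\norm{\phi_x(\cdot,t)}\to0$ (from $\int_1^\infty(\norm{\phi_x}^2+\norm{\phi_{xt}}^2)dt<\infty$). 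Third, the $L^6$ detour for $\psi_x$ is unnecessary here and, as you set it up, misses the endpoint $p=2$: interpolating a decaying $L^6$ norm against bounded $L^2$ and $L^\infty$ norms only covers $2<p<\infty$. Unlike the constant-state case for $v$, you do have $\int_0^\infty\norm{\psi_x}^2dt\le CC_0$ and $\psi_{xt}=\phi_{xx}\in L^2([0,\infty);L^2)$, so $\norm{\psi_x(\cdot,t)}_{L^2}\to0$ directly, and then $\norm{\psi_x}_{L^p}\le\norm{\psi_x}_{L^\infty}^{\frac{p-2}{p}}\norm{\psi_x}_{L^2}^{\frac 2p}\to0$ for all $2\le p<\infty$, which is exactly the paper's argument.
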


In view of \eqref{2-5}, Theorem \ref{u_+=0} is a consequence of Theorem \ref{global existence}. Hence next we are devoted to proving Theorem \ref{global existence}. Similarly as before, we first mollify the (coarse) initial data $(\phi_0, \psi_0)$ as follows:
\begin{equation*}
\begin{split}
\phi^\delta_0=j^\delta*\phi_0, \ \ \ \psi_0^\delta=j^\delta*\psi_0,
\end{split}
\end{equation*}
where $j^\delta$ is the standard mollifying kernel of width $\delta$ (e.g. see \cite{Adams}).
Then we consider the following augmented system
\begin{equation}\label{nonlinear system}
\begin{cases}
\phi^\delta_t=D\phi^\delta_{xx}+\chi V\phi^\delta_x+\chi U\psi^\delta_x+\chi \phi^\delta_x\psi^\delta_x, \  t>0, ~~x\in\R,\\[1mm]
\psi^\delta_t=\phi^\delta_x,
\end{cases}
\end{equation}
with smooth initial perturbation functions $(\phi^\delta_0, \psi_0^\delta)$ which satisfies
\begin{equation}\label{1ini}
(\psi^\delta_0(x), \phi^\delta_0(x))\in H^3(\mathbb{R}),
\end{equation}
and
\begin{equation}\label{pw8mde0}
\|\phi^\delta_0\|_{1}^2+\|\psi^\delta_0\|_1^2\le \|\phi_0\|_{1}^2+\|\psi_0\|_1^2=C_0,
\end{equation}
where we have used \eqref{c0} and the following properties:
\begin{equation*}
\begin{split}
\norm{\partial_k \phi^\delta_0}_{w}\le \norm{\partial_k\phi_0}_{w},\ \norm{\partial_k\psi^\delta_0}_{w}\le \norm{\partial_k\psi_0}_{w} \ \text{for every} \ k=0,1,\  \delta>0.
\end{split}
\end{equation*}

Next, by standard approaches, we prove the local existence of solutions to the system \eqref{nonlinear system} with initial data $(\phi^\delta_0,\  \psi_0^\delta)$ satisfying (\ref{1ini}). Then, we shall show in a sequence of lemmas that these approximate solutions
satisfy some global {\it a priori} estimates, independently of the mollifying parameter $\delta$. By the continuation argument, we can get the global existence of $(\phi^\delta, \psi^\delta)$. Finally, we show that the limit of $(\phi^\delta, \psi^\delta)$ as $\delta \to 0$ is a global weak solution of the Cauchy problem \eqref{nonlinear system}-\eqref{1ini}, and thus Theorem \ref{global existence} is proved.

\subsection{A priori estimates for \eqref{nonlinear system}}
For simplicity, in this subsection, we still use $(\phi,  \psi)$ to represent the approximate solution $(\phi^\delta, \psi^\delta)$ and employ the technique of {\it a priori} assumption to derive the {\it a priori} estimates for the smooth solutions of \eqref{nonlinear system}-\eqref{1ini}.
To this end, we first assume that the solution $(\phi,\psi)$ satisfies for any $t\in[0, T]$ that
\begin{equation}\label{assup}
\|\phi\|_{1}^2+\|\psi\|_1^2\leq 2\kappa_0,
\end{equation}
where $\kappa_0$ is a positive constant. Then we derive the {\it a priori} estimates to obtain global solutions. Finally, we show the obtained global solutions in turn satisfy the above {\it a priori} assumption and close our argument.

We first give the $L^2$-estimate of $(\phi, \psi)$.

\begin{lemma}\label{Basic L^2 estimate}  With the conditions of Theorem \ref{global existence}, we let $(\phi, \psi)$ be a smooth solution of \eqref{nonlinear system} satisfying \eqref{assup}. Then there exists a positive constant $C$ independent of $t$  and $\delta$, such that
\begin{equation}\label{L^2 estimate}
\norm{\phi}^2+\norm{\psi}^2+\int_0^T\norm{\phi_x}^2dt\leq
C C_0+C\kappa_0\int_0^T\int_{\R}\psi_x^2dxdt.
\end{equation}
\end{lemma}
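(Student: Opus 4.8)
The plan is to carry out a weighted energy estimate for the system \eqref{nonlinear system}, multiplying the first equation by $\phi$ and the second by $\chi U\psi$ (the factor $\chi U$ is inserted so that the cross-terms coming from $\chi U\psi_x$ in the $\phi$-equation and from $\phi_x$ in the $\psi$-equation will combine into a total derivative modulo lower-order terms involving $U'$). First I would compute
\begin{equation*}
\frac12\frac{d}{dt}\int_{\R}\bigl(\phi^2+\chi U\psi^2\bigr)dx+D\int_{\R}\phi_x^2\,dx = -\chi\int_{\R}V\phi\phi_x\,dx+\text{(cross terms)}+\text{(cubic term)}-\frac{\chi}{2}\int_{\R}(U_t)\psi^2\,dx,
\end{equation*}
where $U=U(x+x_0-st)$ so $U_t=-sU'$, and then integrate in $t$ over $[0,T]$. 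The linear term $-\chi\int V\phi\phi_x\,dx$ is absorbed by $D\|\phi_x\|^2$ after Cauchy–Schwarz at the cost of $C\|\phi\|^2$, which will later be controlled by the dissipation plus Gronwall; the terms containing $U'$ and $V'$ are dealt with using the exponential (or at least integrable-in-space) decay of $U',V'$ from Proposition \ref{etw}, together with the pointwise bounds $|U'|\le\lambda(u_--u_+)$, $|V'|\le\lambda(u_--u_+)/s$. Because $U$ is bounded below by $u_+>0$, the quadratic form $\phi^2+\chi U\psi^2$ is equivalent to $\phi^2+\psi^2$, so the left-hand side controls $\|\phi\|^2+\|\psi\|^2$.

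The cubic term $\chi\int_{\R}\phi\phi_x\psi_x\,dx$ (and any similar trilinear remainder) is where the factor $C\kappa_0\int_0^T\int_\R\psi_x^2\,dx\,dt$ on the right side of \eqref{L^2 estimate} enters: I would estimate it by
\begin{equation*}
\Bigl|\chi\int_{\R}\phi\phi_x\psi_x\,dx\Bigr|\le \|\phi\|_{L^\infty}\int_{\R}|\phi_x||\psi_x|\,dx\le C\|\phi\|^{1/2}\|\phi_x\|^{1/2}\bigl(\|\phi_x\|^2+\|\psi_x\|^2\bigr),
\end{equation*}
and then invoke the \emph{a priori} assumption \eqref{assup}, which gives $\|\phi\|_{L^\infty}^2\le 2\|\phi\|\|\phi_x\|\le C\kappa_0$, hence $\|\phi\|_{L^\infty}\le C\kappa_0^{1/2}$; the $\|\phi_x\|^3$-type piece is absorbed into $D\|\phi_x\|^2$ when $\kappa_0$ is small (since it carries a factor $\kappa_0^{1/2}$), while the $\|\phi\|_{L^\infty}\|\psi_x\|^2$ piece yields exactly the $C\kappa_0\int_0^T\|\psi_x\|^2\,dt$ term that is retained on the right. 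After collecting everything and using $\|\phi_0\|^2+\|\psi_0\|^2\le C_0$ together with the equivalence of norms, a Gronwall argument in $t$ (the coefficient multiplying $\|\phi\|^2+\|\psi\|^2$ being a fixed constant, or using Lemma \ref{le-ine}) closes the estimate and produces \eqref{L^2 estimate}.

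The main obstacle is the handling of the trilinear term $\chi\int\phi\phi_x\psi_x\,dx$: there is no dissipation in $\psi$, so one cannot absorb a $\|\psi_x\|^2$ contribution into any dissipative term, and it must simply be carried along on the right-hand side with the small factor $\kappa_0$ — this is precisely why the lemma is stated with that extra term rather than as a clean bound, and it is the reason the subsequent lemmas must control $\int_0^T\|\psi_x\|^2\,dt$ independently (so that this term can eventually be absorbed). A secondary technical point is to make sure all the $U'$- and $V'$-weighted terms are genuinely integrable and bounded uniformly in $t$; this follows from the explicit exponential decay of the traveling wave profile recorded in \cite{jin13} and summarized in Proposition \ref{etw}, but it must be checked that the constants do not depend on $\delta$, which is immediate since $(U,V)$ is independent of $\delta$.
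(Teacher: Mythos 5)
Your treatment of the trilinear term is fine and matches the spirit of the paper (use $\norm{\phi}_{L^\infty}\le C\kappa_0^{1/2}$ from \eqref{assup}, absorb the $\norm{\phi_x}$-part into the dissipation, and keep $C\kappa_0\int_0^T\int_{\R}\psi_x^2\,dxdt$ on the right). The gap is in the quadratic (linear-in-perturbation) structure. With your multipliers $(\phi,\;\chi U\psi)$ the energy identity produces leftover terms $\chi\int_{\R}V\phi\phi_x\,dx$, $-\chi\int_{\R}U_x\phi\psi\,dx$ and $-\tfrac{\chi}{2}\int_{\R}U_t\psi^2\,dx$; since $U_t=-sU_x>0$, the last one appears on the right-hand side as $+\tfrac{\chi s}{2}\int_{\R}|U_x|\psi^2\,dx\ge 0$, and none of these has a sign or a small factor. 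There is no dissipation in $\psi$ at all and no dissipation in $\phi$ itself (only in $\phi_x$, and Poincar\'e is unavailable on $\R$), so the only way to close your inequality is Gronwall in time, which yields a constant of the form $Ce^{CT}$. That contradicts the statement of the lemma ($C$ independent of $t$) and makes the later bootstrap impossible: the whole point of this estimate is that it is uniform in $T$, so that \eqref{eqn-2.17} can be fed back and the a priori assumption \eqref{assup} closed with a single small $\kappa_0$. Also note that Proposition \ref{etw} gives only amplitude bounds on $U',V'$, not spatial decay, and spatial decay would not help anyway, since $\int_0^T\int_{\R}|U_x|\psi^2\,dxdt$ grows in $T$ without a sign-definite or dissipative mechanism.

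The paper avoids this by a different choice of weights: multiply the first equation by $\phi/U$ and the second by $\chi\psi$. Then the cross terms combine into the exact derivative $\chi(\phi\psi)_x$ with no $U_x\phi\psi$ remainder, no $U_t\psi^2$ term arises (the $\psi^2$ weight is constant), and all the dangerous quadratic contributions collect on $\phi^2$ with coefficient $\left(\frac{D}{U}\right)_{xx}-\left(\frac{s+\chi V}{U}\right)_x$, which by the traveling wave ODE \eqref{traveling wave equation} equals $\frac{2u_+}{U^3}(s+\chi v_+)U_x<0$ (identity \eqref{zero}) and can simply be dropped. This sign-definite structure, not Gronwall, is what delivers the time-uniform bound; your proposal is missing exactly this idea, so as written it does not prove \eqref{L^2 estimate}.
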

\begin{proof}
\par Multiplying the first equation of $\eqref{nonlinear system}$ by $\phi/U$ and the second  by $\chi\psi$ and adding these equalities, we obtain
\begin{equation*}
\frac{1}{2}\left(\frac{\phi^2}{U}\right)_t-\frac{\phi^2}{2}\left(\frac{1}{U}\right)_t+\left(\frac{\chi\psi^2}{2}\right)_t
=\frac{D\phi\phi_{xx}}{U}+\chi\left(\phi\psi\right)_x+ \frac{\chi
V\phi\phi_x}{U}+\frac{\chi\phi\phi_x\psi_x}{U}.
\end{equation*}
Noting that
\begin{equation*}
\frac{\phi^2}{2}\left(\frac{1}{U}\right)_t=-\frac{s\phi^2}{2}\left(\frac{1}{U}\right)_x,
\end{equation*}
\begin{equation*}
\frac{\phi\phi_{xx}}{U}=\left(\frac{\phi\phi_x}{U}\right)_x-\frac{\phi_x^2}{U}-\phi\phi_x\left(\frac{1}{U}\right)_x
=\left(\frac{\phi\phi_x}{U}\right)_x-\frac{\phi_x^2}{U}-\left(\frac{\phi^2}{2}\left(\frac{1}{U}\right)_x\right)_x+\frac{\phi^2}{2}\left(\frac{1}{U}\right)_{xx},
\end{equation*}
\begin{equation*}
\begin{split}
\frac{V\phi\phi_x}{U}=\frac{1}{2}\left(\frac{V\phi^2}{U}\right)_x-\frac{\phi^2}{2}\left(\frac{V}{U}\right)_x,
\end{split}\end{equation*}
we get
\begin{equation}\label{2.15}
\begin{split}
\frac{1}{2}\left(\frac{\phi^2}{U}+\chi\psi^2\right)_t+\frac{D\phi_x^2}{U}
=&\left(\chi\phi\psi+\frac{D\phi\phi_x}{U}+\frac{D U_x\phi^2}{2U^2}
+\frac{\chi V\phi^2}{2U}\right)_x\\[3mm]
&+\frac{\phi^2}{2}\left[\left(\frac{D}{U}\right)_{xx}-\left(\frac{s+\chi
V}{U}\right)_x\right]+\frac{\chi\phi\phi_x\psi_x}{U}.
\end{split}
\end{equation}
By using $\eqref{traveling wave equation}$ and the fact that $U_x<0$
and $0<u_+\leq U\leq u_-$, it can be checked that
\begin{equation}\label{zero}
\left(\frac{D}{U}\right)_{xx}-\left(\frac{s+\chi
V}{U}\right)_x=\frac{2u_+}{U^3}(s+\chi v_+)\cdot U_x<0.
\end{equation}
Substituting \eqref{zero} into \eqref{2.15} and integrating the
equation over $\R\times[0, T]$, we derive
\begin{equation*}
\begin{split}
&\frac{1}{2}\int_{\R}\left(\frac{\phi^2}{U}+\chi\psi^2\right)dx+D\int_0^T\int_{\R}\frac{\phi_x^2}{U}dxdt\\[3mm]
\le&\frac{1}{2}\int_{\R}\left(\frac{\phi_0^2}{U}+\chi\psi_0^2\right)dx+\chi\int_0^T\int_{\R}\frac{\phi_x\psi_x\phi}{U}dxdt \\[3mm]
\leq&\frac{\chi}{2}\|\psi_0\|^2+C\|\phi_0\|^2+\frac{D}{2}\int_0^T\int_{\R}
\frac{\phi_x^2}{U}dxdt+C\kappa_0\int_0^T\int_{\R}
\frac{\psi_x^2}{U}dxdt,
\end{split}
\end{equation*}
where we have used the Sobolev
inequality $\norm{f}_{L^\infty}^2\le 2\norm{f}\norm{f_x}$ and $\eqref{assup}$. Then, using $0<u_+\leq U\leq u_-$ and \eqref{pw8mde0}, we obtain
\begin{equation*}
\int_{\R}\left(\phi^2+\chi\psi^2\right)dx+D\int_0^T\int_{\R}\phi_x^2dxdt
\leq C(\|\psi_0\|^2+\|\phi_0\|^2)+C\kappa_0\int_0^T\int_{\R}\psi_x^2dxdt,
\end{equation*}
which implies \eqref{L^2 estimate} and the proof of Lemma \ref{Basic L^2 estimate} is completed.
\end{proof}

The next lemma gives the estimate of the first order derivatives of
$(\phi,\psi)$.
\begin{lemma}\label{H^1 estimate} Let $(\phi, \psi)$ be a smooth solution of \eqref{nonlinear system} satisfying \eqref{assup} under the conditions of Theorem \ref{global existence}. Then there exists a positive constant $C$ independent of $t$  and $\delta$, such that
\begin{equation}\label{eqn-2.25}
\|\phi\|^2_{1} +\|\psi\|_1^2+\int_0^T\left(\|\phi_x\|^2_{1}
+\|\psi_x\|^2\right)dt \leq  CC_0.
\end{equation}
\end{lemma}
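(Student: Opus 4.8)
The plan is to upgrade the $L^2$ bound of Lemma \ref{Basic L^2 estimate} to the full first-order estimate \eqref{eqn-2.25} by supplementing it with two differential inequalities: a ``cross-term'' identity that manufactures the dissipation $\int_0^T\norm{\psi_x}^2\,dt$ for the hyperbolic component $\psi$, and a routine parabolic estimate for $\phi_x$ obtained by testing the first equation of \eqref{nonlinear system} against $-\phi_{xx}$. The smallness hypothesis \eqref{assup} (equivalently $C_0\le\varepsilon$) will be used only to absorb the genuinely cubic nonlinearities and to break the coupling with Lemma \ref{Basic L^2 estimate}.

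\textbf{Step 1: dissipation for $\psi_x$.} The key observation is that although $\psi$ solves the transport equation $\psi_t=\phi_x$, differentiating it in $x$ gives $\psi_{xt}=\phi_{xx}$, so that $\int_{\R}\phi\psi_x\,dx$ behaves almost like a Lyapunov functional. Using $\psi_t=\phi_x$ and the first equation,
\begin{equation*}
\frac{d}{dt}\int_{\R}\phi\psi_x\,dx=-\norm{\phi_x}^2+D\int_{\R}\phi_{xx}\psi_x\,dx+\chi\int_{\R}\big(V\phi_x+U\psi_x+\phi_x\psi_x\big)\psi_x\,dx,
\end{equation*}
and the diffusion term rewrites, via $\phi_{xx}=\psi_{xt}$, as $D\int_{\R}\phi_{xx}\psi_x\,dx=\frac{D}{2}\frac{d}{dt}\norm{\psi_x}^2$. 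Rearranging,
\begin{equation*}
\chi\int_{\R}U\psi_x^2\,dx=\frac{d}{dt}\Big(\int_{\R}\phi\psi_x\,dx-\tfrac{D}{2}\norm{\psi_x}^2\Big)+\norm{\phi_x}^2-\chi\int_{\R}V\phi_x\psi_x\,dx-\chi\int_{\R}\phi_x\psi_x^2\,dx .
\end{equation*}
Since $0<u_+\le U\le u_-$ and $u_+>0$ by Proposition \ref{etw}, the left-hand side dominates $\chi u_+\norm{\psi_x}^2$. Integrating over $[0,T]$, the boundary value of $\int_{\R}\phi\psi_x\,dx-\frac{D}{2}\norm{\psi_x}^2$ at $t=T$ is $\le C\norm{\phi(\cdot,T)}^2$ by Young's inequality and at $t=0$ is $\le CC_0$ by \eqref{pw8mde0}; the $V$-term is absorbed by $\frac{\chi u_+}{4}\int_0^T\norm{\psi_x}^2dt+C\int_0^T\norm{\phi_x}^2dt$; and, by \eqref{assup} and $\norm{\phi_x}_{L^\infty}^2\le 2\norm{\phi_x}\norm{\phi_{xx}}\le C\kappa_0$, the cubic term is $\le C\sqrt{\kappa_0}\int_0^T\norm{\psi_x}^2dt$. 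Choosing $\kappa_0$ small then gives $\int_0^T\norm{\psi_x}^2dt\le CC_0+C\norm{\phi(\cdot,T)}^2+C\int_0^T\norm{\phi_x}^2dt$; reading the same identity as an evolution law for $\frac{D}{2}\norm{\psi_x}^2-\int_{\R}\phi\psi_x\,dx$ while keeping $\chi\int_{\R}U\psi_x^2\,dx$ on the left also yields the uniform bound $\sup_{[0,T]}\norm{\psi_x}^2\le CC_0+C\sup_{[0,T]}\norm{\phi}^2+C\int_0^T\norm{\phi_x}^2dt$.

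\textbf{Step 2: the $\phi_x$ estimate and closure.} Testing the first equation of \eqref{nonlinear system} against $-\phi_{xx}$, integrating by parts, and using $\norm{\phi_x}_{L^\infty}^2\le 2\norm{\phi_x}\norm{\phi_{xx}}$ with \eqref{assup} to absorb the cubic term, I would get
\begin{equation*}
\frac12\frac{d}{dt}\norm{\phi_x}^2+\frac{D}{2}\norm{\phi_{xx}}^2\le C\norm{\phi_x}^2+C\norm{\psi_x}^2 ,
\end{equation*}
hence $\norm{\phi_x(\cdot,t)}^2+\int_0^T\norm{\phi_{xx}}^2dt\le C_0+C\int_0^T(\norm{\phi_x}^2+\norm{\psi_x}^2)dt$. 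It then remains to close the coupled inequalities: substituting the first bound of Step 1 into \eqref{L^2 estimate} and using $\norm{\phi(\cdot,T)}^2\le CC_0+C\kappa_0\int_0^T\norm{\psi_x}^2dt$, every occurrence of $\kappa_0\int_0^T\norm{\psi_x}^2dt$ is absorbed once $\kappa_0$ is small, giving $\norm{\phi}^2+\norm{\psi}^2+\int_0^T\norm{\phi_x}^2dt\le CC_0$ uniformly in $T$; feeding this back into Steps 1--2 produces $\int_0^T\norm{\psi_x}^2dt\le CC_0$, $\int_0^T\norm{\phi_{xx}}^2dt\le CC_0$, and the uniform-in-time bounds on $\norm{\phi_x}$ and $\norm{\psi_x}$, which is exactly \eqref{eqn-2.25}.

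\textbf{Main obstacle.} The only non-routine ingredient is the dissipation $\int_0^T\norm{\psi_x}^2dt$: since $\psi$ is governed by a transport equation, no direct energy identity produces it, and every derivative estimate for $\phi$ reintroduces $\norm{\psi_x}^2$ on the right-hand side with an \emph{a priori} order-one constant. The resolution rests on the cancellation $D\int_{\R}\phi_{xx}\psi_x\,dx=\frac{D}{2}\frac{d}{dt}\norm{\psi_x}^2$ together with the strict positivity $U\ge u_+>0$ from Proposition \ref{etw}, which turn $\int_{\R}\phi\psi_x\,dx$ into an effective Lyapunov functional; the leftover terms are cubic and controlled by \eqref{assup}. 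This mechanism is precisely what is unavailable in the constant-state Theorem \ref{pw6-th}, where there is no anti-derivative variable carrying small $L^2$ initial data, so that the $L^2$ bound of $v_x$ is genuinely out of reach.
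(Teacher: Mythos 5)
Your proposal follows essentially the same route as the paper: your Step 1 cross-term identity for $\frac{d}{dt}\int_{\R}\phi\psi_x\,dx$ is exactly the paper's device of multiplying the first equation of \eqref{nonlinear system} by $\psi_x$ and integrating by parts in time (its claim \eqref{eqn-2.17}), with the dissipation coming from $U\ge u_+>0$, and your Step 2 is the paper's second-order energy estimate with only the harmless omission of the $1/U$ weight, the final closure via smallness of $\kappa_0$ and Lemma \ref{Basic L^2 estimate} being identical. One correction: \eqref{assup} controls only $\|\phi\|_1$ and $\|\psi\|_1$, not $\norm{\phi_{xx}}$, so your justification $\norm{\phi_x}_{L^\infty}^2\le 2\norm{\phi_x}\norm{\phi_{xx}}\le C\kappa_0$ is not valid; the cubic term $\chi\int_0^T\int_{\R}\phi_x\psi_x^2\,dxdt$ should instead be split by Cauchy--Schwarz and bounded, as in the paper's \eqref{doub}, by $\epsilon\int_0^T\norm{\psi_x}^2dt+C\kappa_0\int_0^T\left(\norm{\phi_x}^2+\norm{\phi_{xx}}^2\right)dt$, which your closure (where $\int_0^T\norm{\phi_{xx}}^2dt$ enters with a small prefactor) still absorbs, so the argument survives unchanged.
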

\begin{proof}
\par Multiplying the first equation of $\eqref{nonlinear system}$ by $-\phi_{xx}/U$ and the second  by $-\chi\psi_{xx}$ and adding these equalities, we obtain
\begin{equation*}
-\frac{\phi_t\phi_{xx}}{U}-\chi\psi_t\psi_{xx}
=-\frac{D\phi_{xx}^2}{U}-\chi\left(\phi_x\psi_x\right)_x-\frac{\chi
V\phi_x\phi_{xx}}{U}-\frac{\chi\phi_x\psi_x\phi_{xx}}{U}.
\end{equation*}
Simple calculations give us that
\begin{equation*}
\begin{split}
-\frac{\phi_t\phi_{xx}}{U}=&-\left(\frac{\phi_t\phi_{x}}{U}\right)_x+\left(\frac{\phi_t}{U}\right)_x\phi_x\\
=&-\left(\frac{\phi_t\phi_{x}}{U}\right)_x+\frac{\phi_{xt}\phi_x}{U}+\left(\frac{1}{U}\right)_x\phi_t\phi_x\\
=&-\left(\frac{\phi_t\phi_{x}}{U}\right)_x+\left(\frac{\phi_x^2}{2U}\right)_t+\left(\frac{1}{U}\right)_x\frac{s\phi_x^2}{2}+\left(\frac{1}{U}\right)_x\phi_t\phi_x,\\[3mm]
\left(\frac{1}{U}\right)_x\phi_t\phi_x=&\left(\frac{1}{U}\right)_x\phi_x\left(D\phi_{xx}+\chi
V\phi_x+\chi
U\psi_x+\chi\phi_x\psi_x\right)\\
=&\left(\frac{D\phi_x^2}{2}\left(\frac{1}{U}\right)_x\right)_x-\frac{D\phi_x^2}{2}\left(\frac{1}{U}\right)_{xx}+\chi
V\left(\frac{1}{U}\right)_{x}\phi_x^2\\
&+\chi
U\left(\frac{1}{U}\right)_{x}\psi_x\phi_x+\chi\left(\frac{1}{U}\right)_{x}\phi_x^2\psi_x,\\
-\psi_t\psi_{xx}=&-\left(\psi_t\psi_{x}\right)_x+\left(\frac{\psi_x^2}{2}\right)_t,\\
-\frac{V\phi_x\phi_{xx}}{U}=&-\frac{1}{2}\left(\frac{V\phi_x^2}{U}\right)_x+\frac{\phi_x^2}{2}\left(\frac{V}{U}\right)_x.
\end{split}\end{equation*}
Thus we get from above inequalities that
\begin{equation}\label{2.151}
\begin{split}
\frac{1}{2}\left(\frac{\phi_x^2}{U}+\chi\psi_x^2\right)_t+\frac{D\phi_{xx}^2}{U}
=&\left(\frac{\phi_t\phi_{x}}{U}+\chi\psi_t\psi_x+\frac{DU_x\phi_x^2}{2U^2}-\chi\phi_x\psi_x-\frac{\chi
V\phi_x^2}{2U} \right)_x\\[3mm]
&+\frac{\phi_x^2}{2}\left[\left(\frac{D}{U}\right)_{xx}-\left(\frac{s+\chi
V}{U}\right)_x\right]+\frac{\chi V_x\phi_x^2}{U}\\[3mm]
 &-\chi
U\left(\frac{1}{U}\right)_{x}\psi_x\phi_x-\chi\left(\frac{1}{U}\right)_{x}\phi_x^2\psi_x-\frac{\chi\phi_x\psi_x\phi_{xx}}{U}.
\end{split}
\end{equation}
Integrating \eqref{2.151} over $\R\times[0, T]$ and using
\eqref{zero}, we obtain
\begin{equation*}
\begin{split}
&\frac{1}{2}\int_{\R}\left(\frac{\phi_x^2}{U}+\chi\psi_x^2\right)dx+D\int_0^T\int_{\R}\frac{\phi_{xx}^2}{U}dxdt\\[3mm]
\le&\frac{1}{2}\int\left(\frac{\phi_{0x}^2}{U}+\chi\psi_{0x}^2\right)dx
+\chi\int_0^T\int_{\R}\frac{
V_x\phi_x^2}{U}dxdt+\chi\int_0^T\int_{\R}\frac{U_x\psi_x\phi_x}{U}dxdt\\[3mm]
&+\chi\int_0^T\int_{\R}\frac{U_x\phi_x^2\psi_x}{U^2}dxdt-\chi\int_0^T\int_{\R}\frac{\phi_{xx}\phi_x\psi_x}{U}dxdt.
\end{split}\end{equation*}
Using the Cauchy-Schwarz inequality, we have
\begin{eqnarray*}
&&\int_{\R}\left(\frac{\phi_x^2}{U}+\chi\psi_x^2\right)dx+2D\int_0^T\int_{\R}\frac{\phi_{xx}^2}{U}dxdt\\[3mm]
&&\le\int_{\R}\left(\frac{\phi_{0x}^2}{U}+\chi\psi_{0x}^2\right)dx+C\int_0^T\int_{\R}\phi_x^2dxdt+C\int_0^T\int_{\R}
\psi_x^2dxdt\\[3mm]
&&\ \
+\frac{D}{2}\int_0^T\int_{\R}\frac{\phi_{xx}^2}{U}dxdt+C\int_0^T\int_{\R}\phi_x^2\psi_x^2  dxdt,
\end{eqnarray*}
where we have used the fact
$0<u_+\leq U\leq u_-,~\abs{U_x}\leq C,~\abs{V_x}\leq C$ due to Proposition \ref{etw}.
For the last term on the right-hand side of the above inequality, by the Sobolev
inequality $\norm{f}_{L^\infty}^2\le 2\norm{f}\norm{f_x}$ and \eqref{assup}, we have
\begin{equation}\label{doub}
\begin{split}
\int_0^T\int_{\R}\phi_x^2\psi_x^2  dxdt \le& \int_0^T \norm{\phi_x}^2_{L^\infty}\norm{\psi_x}^2  dt \le C\kappa_0 \int_0^T \norm{\phi_x}\norm{\phi_{xx}}  dt  \\[2mm]
\le& C\kappa_0 \int_0^T \left(\norm{\phi_x}^2+\norm{\phi_{xx}}^2\right)dt.
\end{split}
\end{equation}
The above two inequalities and the fact
$0<u_+\leq U\leq u_-$ yield that
\begin{equation}\label{eqn-2.16}
\begin{split}
&\int_{\R}\left(\phi_x^2+\psi_x^2\right)dx+\int_0^T\int_{\R}\phi_{xx}^2dxdt\\[2mm]
\leq &CC_0+C\int_0^T\int_{\R}{\psi_x^2}dxdt+C\kappa_0 \int_0^T \left(\norm{\phi_x}^2+\norm{\phi_{xx}}^2\right)dt\\[2mm]
\leq& CC_0 +C\int_0^T\int_{\R}{\psi_x^2}dxdt+C\kappa_0\int_0^T\int_{\R}\psi_x^2dxdt+C\kappa_0\int_0^T \norm{\phi_{xx}}^2 dt\\[2mm]
\leq& CC_0 +C\int_0^T\int_{\R}{\psi_x^2}dxdt+C\kappa_0\int_0^T \norm{\phi_{xx}}^2  dt,
\end{split}\end{equation}
where \eqref{L^2 estimate} has been used.

Next, we claim
\begin{equation}\label{eqn-2.17}
\int_0^T\int_{\R} \psi_x^2dxdt\leq C \left(C_0+\kappa_0\int_0^T \norm{\phi_{xx}}^2  dt\right).
\end{equation}
Indeed multiplying the first equation of $\eqref{nonlinear system}$ by
$\psi_x$, we get
\begin{equation}\label{eqn-2.18}
\chi U\psi_x^2=\phi_{t}\psi_x-D\phi_{xx}\psi_x-\chi
V\phi_x\psi_x-\chi\phi_x\psi_x^2.
\end{equation}
Integrating \eqref{eqn-2.18} over  $\R\times[0, T]$,
using the fact $\psi_{xt}=\phi_{xx}$ and following results
\begin{equation*}
\begin{split}
\phi_{t}\psi_x&=(\phi\psi_x)_t-\phi\psi_{xt}=(\phi\psi_x)_t-\phi\phi_{xx}=(\phi\psi_x)_t-(\phi\phi_x)_x+\phi_x^2,\\[2mm]
\phi_{xx}\psi_x&=\psi_{xt}\psi_x=\frac{1}{2}(\psi_x^2)_t,
\end{split}\end{equation*}
we obtain
\begin{equation*}
\begin{split}
&\frac{D}{2}\int_{\R}\psi_x^2dx+\chi\int_0^T\int_{\R} U\psi_x^2dxdt \\[2mm]
&=\frac{D}{2}\int_0^\infty\psi_{0x}^2dx+\int_{\R}\phi\psi_xdx-\int_{\R}\phi_0\psi_{0x}dx\\[2mm]
&\ \ \ +
\int_0^T\int_{\R}\phi_x^2dxdt-\chi\int_0^T\int_{\R}
V\phi_x\psi_xdxdt-\chi\int_0^T\int_{\R}\phi_x\psi_x^2dxdt\\[2mm]
&\leq
\frac{D+1}{2}\int_{\R}\psi_{0x}^2dx+\frac{1}{2}\int\phi_0^2dx
+\frac{1}{D}\int_{\R}\phi^2dx+\frac{D}{4}\int_{\R}\psi^2_xdx\\[2mm]
&\quad+C\int_0^T\int_{\R}
\phi_x^2dxdt+\frac{\chi}{2}\int_0^T\int_{\R}
U\psi_x^2dxdt+C\int_0^T\int_{\R}\phi_x^2\psi_x^2  dxdt ,
\end{split}
\end{equation*}
where we have used the Cauchy-Schwarz inequality and the fact
$0<u_+\leq U\leq u_-,~\abs{V}\leq C$. From
this inequality and  the fact $0<u_+\leq U\leq u_-$, \eqref{L^2 estimate} and \eqref{doub}, we have that
\begin{equation*}
\begin{split}
\int_{\R}\psi_x^2dx+\int_0^T\int_{\R}
\psi_x^2dxdt
\leq& C\left(\int_{\R}\psi_{0x}^2dx
+\int_{\R}\phi_{0x}^2dx\right)+C\int_{\R}\phi^2dx\\
&+C\int_0^T\int_{\R}\phi_x^2dxdt+C\kappa_0\int_0^T \norm{\phi_{xx}}^2 dt +C\kappa_0\int_0^T \norm{\phi_x}^2dt\\[2mm]
\leq&
CC_0+C\kappa_0\int_0^T\int_{\R}\psi_x^2dxdt+C\kappa_0\int_0^T \norm{\phi_{xx}}^2  dt.
\end{split}
\end{equation*}
Setting $\kappa_0$ suitably small such that $C\kappa_0\le\frac12$,
we get \eqref{eqn-2.17}. Then substituting \eqref{eqn-2.17} into
\eqref{eqn-2.16} and choosing $\kappa_0$ suitably small, we have
\begin{equation*}
\begin{split}
\int_{\R}\left(\phi_x^2+\psi_x^2\right)dx+\int_0^T\int_{\R}\phi_{xx}^2dxdt
 \leq CC_0,
\end{split}\end{equation*}
which together with \eqref{L^2 estimate} and \eqref{eqn-2.17} gives \eqref{eqn-2.25}.
\end{proof}

Now, taking $C_0$ sufficiently small such that $CC_0\le \kappa_0$, we immediately get from \eqref{eqn-2.25} that
\begin{equation*}
\|\phi\|^2_{1} +\|\psi\|_1^2+\int_0^T\left(\|\phi_x\|^2_{1}
+\|\psi_x\|^2\right)dt \leq \kappa_0,
\end{equation*}
which closes the a priori assumption \eqref{assup}.

Next, we derive the appropriate estimates for the second order derivative of $\phi$.


\begin{lemma}\label{sec}Let the conditions of Theorem \ref{global existence} hold and $(\phi, \psi)$ be a smooth solution of \eqref{nonlinear system} satisfying \eqref{assup}.
Then there exists a positive constant $C$ independent of $t$  and $\delta$, such that
\begin{equation}\label{eqn-2.31}
\sigma\int_{\R}({\phi^2_{t}}+\phi^2_{xx}) dx+D\int_0^T\int_{\R}\sigma\phi^2_{xt}dxdt\le CC_0,
\end{equation}
where $\sigma=\sigma(t)=\min\{1,t\}$ and $C$ is positive constant independent of $t$.
\end{lemma}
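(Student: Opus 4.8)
The plan is to adapt the argument of Lemma \ref{pw6-h^1 estimate} to the present setting; the key difference is that the initial perturbation now satisfies $(\phi_0,\psi_0)\in H^1(\R)$, so that $\phi_{0x},\psi_{0x}\in L^2(\R)$ are available, and consequently only the first power of the time weight $\sigma=\min\{1,t\}$ is needed rather than $\sigma^2$. The first thing I would record is the auxiliary bound $\int_0^T\|\phi_t\|^2\,dt\le CC_0$: this follows directly from the first equation of \eqref{nonlinear system}, which gives the pointwise inequality $\|\phi_t\|^2\le C\left(\|\phi_{xx}\|^2+\|\phi_x\|^2+\|\psi_x\|^2+\|\phi_x\psi_x\|^2\right)$, together with Lemma \ref{H^1 estimate} (in particular $\int_0^T(\|\phi_{xx}\|^2+\|\phi_x\|^2+\|\psi_x\|^2)\,dt\le CC_0$), the a priori assumption \eqref{assup}, and the Sobolev inequality $\|\phi_x\|_{L^\infty}^2\le 2\|\phi_x\|\|\phi_{xx}\|$, which yield $\int_0^T\|\phi_x\psi_x\|^2\,dt\le C\kappa_0\int_0^T\|\phi_x\|\|\phi_{xx}\|\,dt\le CC_0$. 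It is precisely this bound --- unavailable in the constant-state case, where no unweighted $\int_0^T\|\tilde u_{xx}\|^2\,dt$ estimate exists --- that allows the weight $\sigma$ in place of $\sigma^2$.

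Next I would differentiate the first equation of \eqref{nonlinear system} in $t$, using $\psi_{xt}=\phi_{xx}$, $U_t=-sU'$ and $V_t=-sV'$, to get
\begin{equation*}
\phi_{tt}=D\phi_{xxt}-s\chi V'\phi_x+\chi V\phi_{xt}-s\chi U'\psi_x+\chi U\phi_{xx}+\chi\psi_x\phi_{xt}+\chi\phi_x\phi_{xx},
\end{equation*}
then multiply it by $\sigma\phi_t$ and integrate over $\R\times[0,T]$. The principal terms produce $\frac12\sigma\|\phi_t\|^2+D\int_0^T\!\int_{\R}\sigma\phi_{xt}^2\,dx\,dt$ on the left, while the time-weight correction term on the right equals $\frac12\int_0^{\min\{1,T\}}\|\phi_t\|^2\,dt\le CC_0$ by the auxiliary bound. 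The linear terms containing $V',V,U',\psi_x$ are all controlled by $C\int_0^T(\|\phi_x\|^2+\|\psi_x\|^2+\|\phi_t\|^2)\,dt\le CC_0$, using $|U'|,|V'|,|U_x|,|V_x|\le C$ from Proposition \ref{etw} and an integration by parts in $x$ for the term with $\chi V\phi_{xt}$ (which becomes $-\frac{\chi}{2}\sigma\int_{\R}V_x\phi_t^2\,dx$). The term $\chi\int_0^T\!\int_{\R}\sigma U\phi_{xx}\phi_t\,dx\,dt$ is handled by integrating by parts once in $x$ and then in $t$: this generates the boundary term $-\frac{\chi}{2}\sigma\int_{\R}U\phi_x^2\,dx$ at $t=T$, which has a favorable sign since $U>0$, plus contributions bounded by $C\int_0^T\|\phi_x\|^2\,dt\le CC_0$. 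Finally, having bounded $\sigma\|\phi_t\|^2$ and $\int_0^T\!\int_{\R}\sigma\phi_{xt}^2\,dx\,dt$, I would recover $\sigma\|\phi_{xx}\|^2$ from the relation $D\phi_{xx}=\phi_t-\chi V\phi_x-\chi U\psi_x-\chi\phi_x\psi_x$ together with the $L^\infty$-in-time bounds $\|\phi_x\|^2+\|\psi_x\|^2\le CC_0$ of Lemma \ref{H^1 estimate}, which yields \eqref{eqn-2.31}.

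The main obstacle is the pair of genuinely nonlinear terms $\chi\int_0^T\!\int_{\R}\sigma\psi_x\phi_{xt}\phi_t\,dx\,dt$ and $\chi\int_0^T\!\int_{\R}\sigma\phi_x\phi_{xx}\phi_t\,dx\,dt$ (together with $\sigma\|\phi_x\psi_x\|^2$ arising in the recovery of $\sigma\|\phi_{xx}\|^2$), none of which closes by Cauchy--Schwarz alone. Estimating them through the Sobolev inequality and the bounds $\sup_t\|\phi_x\|^2,\ \sup_t\|\psi_x\|^2\le CC_0$ produces factors of $C_0^{1/2}$ multiplying the very quantities $\sup_{[0,T]}\sigma\|\phi_{xx}\|^2$ and $\int_0^T\!\int_{\R}\sigma\phi_{xt}^2\,dx\,dt$ that we are trying to estimate; one must therefore set up a closed inequality of the form $N(T)\le CC_0+C\sqrt{C_0}\,N(T)$, where $N(T):=\sup_{[0,T]}\sigma\left(\|\phi_t\|^2+\|\phi_{xx}\|^2\right)+\int_0^T\!\int_{\R}\sigma\phi_{xt}^2\,dx\,dt$, and absorb the last term by taking $C_0$ (hence $\kappa_0$) small --- choosing, as usual, the constants in Young's inequality small relative to $D$ so that a fraction of $D\int_0^T\!\int_{\R}\sigma\phi_{xt}^2\,dx\,dt$ may be returned to the left-hand side.
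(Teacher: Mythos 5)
Your proposal is correct and follows essentially the same route as the paper: differentiate the first equation in $t$, test with $\sigma\phi_t$, control the weight-correction term by the unweighted bound $\int_0^T\|\phi_t\|^2\,dt\le CC_0$ (the paper's \eqref{2.1520}), treat the nonlinear terms via the Sobolev interpolation $\|f\|_{L^\infty}^2\le 2\|f\|\,\|f_x\|$ with Young absorption of a fraction of $D\int_0^T\int_{\R}\sigma\phi_{xt}^2\,dx\,dt$, and finally recover $\sigma\|\phi_{xx}\|^2$ from the equation using smallness of $C_0$. Your extra integrations by parts for the $\chi U\phi_{xx}\phi_t$ and $\chi V\phi_{xt}\phi_t$ terms and the global bookkeeping $N(T)\le CC_0+C\sqrt{C_0}\,N(T)$ are harmless variants of what the paper does term by term (the paper bounds the $U\phi_{xx}\phi_t$ term directly by Cauchy--Schwarz, since $\int_0^T\|\phi_{xx}\|^2\,dt\le CC_0$ is already available from Lemma \ref{H^1 estimate}).
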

\begin{proof}
\par We differentiate the first equation of \eqref{nonlinear system} with respect to $t$ to get
\begin{equation*}
\phi_{tt}=D\phi_{xxt}-\chi sV_x\phi_x+\chi V\phi_{xt}-\chi s U_x\psi_x+\chi U\psi_{xt}+\chi\phi_{xt}\psi_x+\chi\phi_x\psi_{xt}.
\end{equation*}
Multiplying the above equality by
$\sigma\phi_{t}$, one gets
\begin{equation}\label{2.152}
\begin{split}
{\sigma\phi_{tt}\phi_{t}}=&{D\sigma\phi_{xxt}\phi_{t}}+\chi\sigma U\psi_{xt}\phi_{t}\\
&+{\chi\sigma\phi_{t}}(
- sV_x\phi_x+ V\phi_{xt}-s U_x\psi_x+\phi_{xt}\psi_x+\phi_x\psi_{xt}).
\end{split}
\end{equation}
Integrating \eqref{2.152}  over $\R\times[0, T]$
and rearranging the resulting equation, we get
\begin{equation}\label{2.31}
\begin{split}
&\frac{1}{2}\int_{\R}{\sigma\phi^2_{t}} dx+D\int_0^T\int_{\R}\sigma\phi^2_{xt}dxdt
\\[2mm]
&=\frac{1}{2}\int_0^{\sigma(t)}\int_{\R}\phi^2_{t}dxdt+\chi\int_0^T\int_{\R}\sigma U\phi_{xx}\phi_{t}dxdt+\chi\int_0^T\int_{\R}\sigma\phi_{xt}\psi_x\phi_t dxdt
\\[2mm]
& \ \ \ +\chi\int_0^{t}\int_{\R}{\sigma\phi_{t}}(
- s U_x\psi_x-sV_x\phi_x+ V\phi_{xt})dxdt+\chi\int_0^T\int_{\R}\sigma\phi_{t}\phi_x\phi_{xx}dxdt\\[2mm]
&=I_1+I_2+I_3+I_4+I_5,
\end{split}\end{equation}where we have used the integration by parts and the fact $\psi_{xt}=\phi_{xx}$ due to the second equation of $\eqref{nonlinear system}$. Because $\left|V\right|$ and $|U|$ are all bounded, we
get by the first equation of \eqref{nonlinear system}, \eqref{doub} and \eqref{eqn-2.25} that
\begin{equation}\label{2.1520}
\begin{split}
\int_0^T\int_{\R}\phi^2_{t}dxdt\le& C\int_0^T\int_{\R} (\phi^2_{xx}+V^2 \phi^2_{x}+ U^2 \psi^2_{x})dxdt+C\int_0^T\int_{\R}\phi^2_{x}\psi^2_{x}dxdt\\[2mm]
\le& C\int_0^T\int_{\R} (\phi^2_{xx}+\phi^2_{x}+\psi^2_{x})dxdt+CC_0\int_0^T\int_{\R} (\phi^2_{xx}+\phi^2_{x})dxdt\\[2mm]
\le& CC_0.
\end{split}
\end{equation}
Then, $I_1$ can be bounded as
$
I_1\le \int_0^1\int_{\R}\phi^2_{t}dxdt
\le CC_0.
$
For $I_2$, by the Cauchy-Schwartz inequality, \eqref{eqn-2.25} and \eqref{2.1520}, we have
$
I_2\le C\int_0^T\int_{\R}\sigma (\phi^2_{xx}+\phi^2_{t})dxdt \le CC_0.
$
For $I_3$, using the Cauchy-Schwartz and Sobolev inequalities, \eqref{eqn-2.25} and \eqref{2.1520}, we have
\begin{equation*}
\begin{split}
I_3\le& \frac{D}{8}\int_0^T\int_{\R}\sigma\phi^2_{xt}dxdt+C\int_0^T\sigma\norm{\phi_t}^2_{L^\infty}\norm{\psi_x}^2dt\\[2mm]
\le& \frac{D}{8}\int_0^T\int_{\R}\sigma\phi^2_{xt}dxdt+CC_0\int_0^T\sigma\norm{\phi_t}\norm{\phi_{xt}}dt\\[2mm]
\le& \frac{D}{4}\int_0^T\int_{\R}\sigma\phi^2_{xt}dxdt+CC_0\int_0^T\sigma\norm{\phi_t}^2dt\\[2mm]
\le& \frac{D}{4}\int_0^T\int_{\R}\sigma\phi^2_{xt}dxdt+CC_0.
\end{split}
\end{equation*}
Since $\left|V\right|$, $|U_x|$ and $|V_x|$ are all bounded, we
get by the Cauchy-Schwartz inequality and \eqref{eqn-2.25} that
\begin{equation*}
\begin{split}
I_4\le&
\frac{D}{4}\int_0^T\int_{\R}\sigma\phi^2_{xt}dxdt+C\int_0^T\int_{\R} \sigma(\phi^2_{t}+\phi^2_{x}+ \psi^2_{x})dxdt\\[2mm]
\le& \frac{D}{4}\int_0^T\int_{\R}\sigma\phi^2_{xt}dxdt+CC_0.
\end{split}
\end{equation*}
Using the integration by parts, the Cauchy-Schwartz and Sobolev inequalities and \eqref{eqn-2.25}, we have
\begin{equation*}
\begin{split}
I_5=&-\frac{\chi}{2}\int_0^T\int_{\R}\sigma\phi_{xt}\phi^2_xdxdt\\[2mm]
\le& \frac{D}{4}\int_0^T\int_{\R}\sigma\phi^2_{xt}dxdt+C\int_0^T\sigma\norm{\phi_x}^2_{L^\infty}\norm{\phi_x}^2dxdt\\[2mm]
\le& \frac{D}{4}\int_0^T\int_{\R}\sigma\phi^2_{xt}dxdt+CC_0\int_0^T\sigma\norm{\phi_x}^2_1dxdt\\[2mm]
\le& \frac{D}{4}\int_0^T\int_{\R}\sigma\phi^2_{xt}dxdt+CC_0.
\end{split}
\end{equation*}
Substituting the estimates of $I_1-I_5$ into \eqref{2.31},
one has
\begin{equation}\label{eqn-2.27}
\begin{split}
\int_{\R}{\sigma\phi^2_{t}} dx+D\int_0^T\int_{\R}\sigma\phi^2_{xt}dxdt\le CC_0,
\end{split}\end{equation}
which, combined with \eqref{nonlinear system}, the Cauchy-Schwartz and Sobolev inequalities and \eqref{eqn-2.25} gives
\begin{equation}\label{uxx}
\begin{split}
\sigma\int_{\R}\phi^2_{xx}dx\le& C\sigma\int_{\R} (\phi^2_{t}+V^2 \phi^2_{x}+ U^2 \psi^2_{x})dx+C\sigma\int_{\R}\phi^2_{x}\psi^2_{x}dx\\[2mm]
\le& C\sigma\int_{\R} (\phi^2_{t}+\phi^2_{x}+\psi^2_{x})dx+CC_0\sigma\int_{\R} (\phi^2_{xx}+\phi^2_{x})dx\\[2mm]
\le& CC_0+CC_0\sigma\int_{\R} \phi^2_{xx}dx.
\end{split}
\end{equation}
By choosing $C_0$ sufficiently small, we get from \eqref{uxx} that
$\sigma\int_{\R}\phi^2_{xx}dx\le CC_0,$
which together with \eqref{eqn-2.27} leads to \eqref{eqn-2.31}. Thus, the proof of Lemma \ref{sec} is completed.
\end{proof}

\begin{lemma}Let the conditions of Theorem \ref{global existence} hold and $(\phi, \psi)$ be a smooth solution of \eqref{nonlinear system}.
Then it holds that
\begin{equation*}
\begin{split}
\sup\limits_{x\in\mathbb{R}}\abs{u(x,t)-U}\to 0
~~as~~t\to\infty.
\end{split}
\end{equation*}
Furthermore there exists a positive constant $\hat{T}>1$, such that $
u\geq \frac{u_+}{2}>0 $ for any $t\geq\hat{T}$.
\end{lemma}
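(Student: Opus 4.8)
The plan is to mirror the proof of Lemma \ref{lnn} (together with Remark \ref{lrem}), exploiting the decomposition \eqref{2-5}, which gives $u(x,t)-U(x+x_0-st)=\phi_x(x,t)$; thus the first assertion is equivalent to $\norm{\phi_x(\cdot,t)}_{L^\infty}\to 0$ as $t\to\infty$. Recall that once $C_0$ is taken small enough the {\it a priori} assumption \eqref{assup} has been closed, so the estimates of Lemmas \ref{H^1 estimate} and \ref{sec} hold for the genuine smooth solution, with constants independent of $\delta$.

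First I would collect the bounds that will be used. From \eqref{eqn-2.25} we have $\sup_{t\ge 0}\norm{\phi_x(\cdot,t)}_1^2\le CC_0$ and $\int_0^\infty\bigl(\norm{\phi_x}^2+\norm{\phi_{xx}}^2\bigr)\,dt\le CC_0$, while from \eqref{eqn-2.31}, using $\sigma(t)=1$ for $t\ge 1$, we get $\int_1^\infty\norm{\phi_{xt}}^2\,dt\le CC_0$. Setting $h(t):=\norm{\phi_x(\cdot,t)}^2$, we have $\abs{h'(t)}=\abs{2\int_{\R}\phi_x\phi_{xt}\,dx}\le\norm{\phi_x}^2+\norm{\phi_{xt}}^2$, which is integrable on $[1,\infty)$, so $h$ converges as $t\to+\infty$; since $h$ is itself integrable on $[1,\infty)$, that limit is $0$, i.e.\ $\norm{\phi_x(\cdot,t)}\to 0$. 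Because $\norm{\phi_x(\cdot,t)}_1^2\le CC_0$ for every $t$, we have $\phi_x(\cdot,t)\in H^1(\R)$ and hence $\phi_x(x,t)\to 0$ as $x\to\pm\infty$, so that
\begin{equation*}
\phi_x^2(x,t)=-2\int_x^{\infty}\phi_x\phi_{xx}\,dy\le 2\norm{\phi_x(\cdot,t)}\,\norm{\phi_{xx}(\cdot,t)}\le C\norm{\phi_x(\cdot,t)}\longrightarrow 0
\end{equation*}
uniformly in $x$ as $t\to\infty$. This proves $\sup_{x\in\R}\abs{u(x,t)-U(x+x_0-st)}\to 0$.

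For the lower bound, recall from Proposition \ref{etw} that $U$ is monotone with $u_+\le U(z)\le u_-$ for all $z\in\R$, so $U\ge u_+>0$. Combining this with the uniform convergence just established, one may fix $\hat T>1$ so large that $\sup_{x\in\R}\abs{u(x,t)-U(x+x_0-st)}<\tfrac{u_+}{2}$ for all $t\ge\hat T$, whence $u(x,t)\ge U(x+x_0-st)-\tfrac{u_+}{2}\ge\tfrac{u_+}{2}>0$ for all $x\in\R$ and $t\ge\hat T$. The argument involves no real obstacle; the only point deserving attention is that every bound invoked is uniform in the mollification parameter $\delta$, so the resulting threshold $\hat T$ is $\delta$-independent and can be used safely in the later lemmas, exactly as $\hat T$ was used after Remark \ref{lrem} in the constant-state case.
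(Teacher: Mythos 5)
Your argument is correct and is essentially the paper's own proof: the time-integrability of $\norm{\phi_x}^2$ and $\norm{\phi_{xt}}^2$ (the latter from \eqref{eqn-2.31} with $\sigma=1$ for $t\ge 1$) gives $\norm{\phi_x(\cdot,t)}\to 0$, the interpolation $\phi_x^2\le 2\norm{\phi_x}\norm{\phi_{xx}}$ upgrades this to uniform convergence via \eqref{2-5}, and the bound $u\ge \frac{u_+}{2}$ for $t\ge\hat T$ follows from $U\ge u_+$ exactly as in the paper. The only slip is attributing the uniform-in-time bound on $\norm{\phi_{xx}(\cdot,t)}$ to \eqref{eqn-2.25}; it actually comes from the $\sigma\norm{\phi_{xx}}^2\le CC_0$ part of \eqref{eqn-2.31} (valid for $t\ge 1$), which you invoke anyway, so the proof stands.
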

\begin{proof}
\par
From $\sigma=1$ for $t\geq1$, $\psi_{xt}=\phi_{xx}$ and
$\eqref{le-last}$, we have
\begin{equation*}
\int_1^\infty\left(\|\phi_x\|^2
+\|\phi_{xt}\|^2+\|\psi_x\|^2
+\|\psi_{xt}\|^2\right)dt\le C,
\end{equation*}
which implies that
\begin{equation}\label{pwlt}
\norm{\phi_x(\cdot,t),\psi_x(\cdot,t)}\to 0
~~\text{as}~~t\to\infty.
\end{equation}
Hence, for all $x\in\mathbb{R}$, $t>1$,
\begin{equation*}
\begin{split}
\phi_x^2(x,t)&=2\left|\int^{\infty}_x\phi_x\phi_{xx}(y,t)dy\right|\\
             &\leq 2\left(\int_{\R}\phi_x^2dy\right)^{1/2}\left(\int_{\R}\phi_{xx}^2dy\right)^{1/2}\\
              &=2\left(\int_{\R}\phi_x^2dy\right)^{1/2}\left(\int_{\R}\sigma\phi_{xx}^2dy\right)^{1/2}
             \leq C\norm{\phi_x(\cdot,t)}\to 0 ~~\mathrm{as}~~ t\to\infty,
\end{split}
\end{equation*}
where we have used $\sigma(t)=1$ for $t>1$, \eqref{eqn-2.31} and \eqref{pwlt}. Thus,
$
\sup\limits_{x\in\mathbb{R}}\abs{\phi_x(x,t)}\to
0~~as~~t\to\infty,
$
which together with \eqref{2-5} leads to
$
\sup\limits_{x\in\mathbb{R}}\abs{u(x,t)-U}\to 0
~~as~~t\to\infty.
$
This implies that there exists a positive constant $\hat{T}>1$, such that for any $t\geq\hat{T}$,
$
\sup\limits_{x\in[\hat{T}, \infty)}\abs{u(x,t)-U}\le \frac{u_+}{2},
$
which, along with $U>u_+>0$ gives
$
u\ge \frac{u_+}{2}>0,\ {\rm for } \ t\geq\hat{T}
$
and hence completes the proof.
\end{proof}
We now proceed to derive a uniform (in time) upper bound for $\psi_x$.
\begin{lemma}\label{secl}Assume  the conditions of Theorem \ref{global existence} hold.
Let $(\phi, \psi)$ be a smooth solution of \eqref{nonlinear system}.
Then there exists a positive constant $C$ independent of $t$  and $\delta$, such that
\begin{equation}\label{seclp}
\norm{\psi_x}_{L^\infty} \le C.
\end{equation}
\end{lemma}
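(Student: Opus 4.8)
The plan is to run the ``effective viscous flux'' argument of Lemma~\ref{pw6-secl}, but carried out in the perturbation variables. The naive physical flux $Du_x+\chi uv$ cannot be used here because it tends to the nonzero constants $\chi u_\pm v_\pm$ at $x=\pm\infty$ and hence is not in $L^2(\R)$; the correct object is $\phi_t$, whose spatial antiderivative controls $(u-U)_t$ (playing the role that $F$ with $F_x=\tilde u_t$ played before) and which carries the good bounds produced by Lemmas~\ref{H^1 estimate} and \ref{sec}.

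First I would rewrite the first equation of \eqref{nonlinear system}. Writing $u=U+\phi_x>0$ (so that $\chi U\psi_x+\chi\phi_x\psi_x=\chi u\,\psi_x$) and using $\psi_{xt}=\phi_{xx}$, which follows from $\psi_t=\phi_x$, the equation becomes
\begin{equation*}
D\,\partial_t(\psi_x)+\chi u\,\psi_x=\phi_t-\chi V\phi_x .
\end{equation*}
For each fixed $x$ this is an ODE in $t$ of the form $y'+\alpha(t)y=g(t)$ with $\alpha=\chi u/D\ge0$ and $g=D^{-1}(\phi_t-\chi V\phi_x)$, and $\alpha\ge\beta:=\chi u_+/(2D)>0$ for $t\ge\hat T$ by the preceding lemma. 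Applying Lemma~\ref{le-ine} (with $T_1=\hat T$, $p=4$) to $y=\psi_x(x,\cdot)$ and to $-\psi_x(x,\cdot)$ and then taking the supremum over $x$ gives
\begin{equation*}
\norm{\psi_x(\cdot,t)}_{L^\infty}\le\norm{\psi_{0x}}_{L^\infty}
+C\Big(\int_0^{\hat T}\norm{g}_{L^\infty}\,dt+\Big(\int_{\hat T}^{t}\norm{g}_{L^\infty}^4\,dt\Big)^{1/4}\Big),
\end{equation*}
where $\norm{\psi_{0x}^\delta}_{L^\infty}\le\norm{\psi_{0x}}_{L^\infty}$ is uniform in $\delta$ by a property of the mollifier.

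It then remains to bound the two time integrals of $\norm{g}_{L^\infty}$. By the one-dimensional Sobolev inequality $\norm{f}_{L^\infty}^2\le2\norm{f}\norm{f_x}$ and the boundedness of $V$,
\begin{equation*}
\norm{g}_{L^\infty}\le C\big(\norm{\phi_t}^{1/2}\norm{\phi_{xt}}^{1/2}+\norm{\phi_x}^{1/2}\norm{\phi_{xx}}^{1/2}\big).
\end{equation*}
On $[0,\hat T]$ I would insert the weight $\sigma$ and apply H\"older's inequality, using $\int_0^{\hat T}\norm{\phi_t}^2\,dt\le CC_0$ (see \eqref{2.1520}), $\int_0^{\hat T}\sigma\norm{\phi_{xt}}^2\,dt\le CC_0$ from Lemma~\ref{sec}, $\norm{\phi_x}^2\le CC_0$ and $\int_0^{\hat T}\norm{\phi_{xx}}^2\,dt\le CC_0$ from \eqref{eqn-2.25}, together with $\int_0^1 t^{-1/2}\,dt<\infty$, to get $\int_0^{\hat T}\norm{g}_{L^\infty}\,dt\le C$. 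On $[\hat T,t]$, where $\sigma\equiv1$, I would pull $\sup_{s\ge\hat T}\norm{\phi_t(\cdot,s)}^2\le CC_0$ (Lemma~\ref{sec}) and $\sup_s\norm{\phi_x(\cdot,s)}^2\le CC_0$ (from \eqref{eqn-2.25}) out of the integrals and use $\int_{\hat T}^{\infty}\norm{\phi_{xt}}^2\,dt\le CC_0$ and $\int_0^{\infty}\norm{\phi_{xx}}^2\,dt\le CC_0$ to get $\int_{\hat T}^{t}\norm{g}_{L^\infty}^4\,dt\le C$. Combining these with the displayed inequality yields \eqref{seclp}.

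The main obstacle is conceptual rather than computational: recognizing that the effective viscous flux in this setting is the perturbation flux $\phi_t$ (not the physical flux), which is exactly what makes the forcing $g$ of the ODE decay in time and estimable through the existing energy bounds. The remaining technical care, precisely as in Lemma~\ref{pw6-secl}, is that the only available control of $\phi_{xt}$ near $t=0$ is the time-weighted bound $\int_0^{\hat T}\sigma\norm{\phi_{xt}}^2\,dt\le CC_0$, so the singular factor $\sigma^{-1/2}$ must be absorbed by H\"older's inequality on $[0,\hat T]$; this is the reason for working with $\hat T>1$, so that $\sigma=\min\{1,t\}$ contributes only the integrable singularity $t^{-1/2}$ on $[0,1]$.
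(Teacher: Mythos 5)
Your proposal is correct and follows essentially the same route as the paper: rewrite the first equation (using $u=U+\phi_x$ and $\psi_{xt}=\phi_{xx}$) as the damped ODE $\psi_{xt}+\frac{\chi}{D}u\,\psi_x=\frac1D(\phi_t-\chi V\phi_x)$, invoke Lemma \ref{le-ine} with $T_1=\hat T$ and the lower bound $u\ge u_+/2$ for $t\ge\hat T$, and control the forcing by Sobolev interpolation together with \eqref{2.1520}, \eqref{eqn-2.25} and Lemma \ref{sec}, absorbing the $\sigma^{-1/2}$ singularity on $[0,\hat T]$ by H\"older. The only (immaterial) difference is your choice $p=4$ in Lemma \ref{le-ine} where the paper uses $p=2$.
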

\begin{proof}
It follows from \eqref{nonlinear system} that
\begin{equation*}
\begin{split}
\psi_{xt}=\phi_{xx}=\frac{1}{D}(\phi_{t}-\chi V\phi_x-\chi U\psi_x-\chi \phi_x\psi_x),
\end{split}\end{equation*}
which together with \eqref{2-5} gives
\begin{equation*}
\begin{split}
\psi_{xt}+\frac{\chi}{D} u\psi_x=&\frac{1}{D}(\phi_{t}-\chi V\phi_x)
\le C\norm{\phi_{t}}_{L^\infty}+C\norm{\phi_x}_{L^\infty}.
\end{split}\end{equation*}
Taking
\begin{equation}\label{le-def}
\begin{split}
\begin{cases}
y=\psi_x, \ \ \alpha(t)=\frac{\chi}{D} u, \ \ \beta=\frac{\chi u_+}{2D}, \ \ T_1=\hat{T}>1, \ p=2,\\
g(t)=C\norm{\phi_{t}}_{L^\infty}+C\norm{\phi_x}_{L^\infty}
\end{cases}
\end{split}\end{equation}
in Lemma \ref{le-ine}, we have
\begin{equation}\label{l-in}
\begin{split}
\psi_x\le \norm{\psi_{0x}}_{L^\infty}+\left(1+\frac{2D}{\chi u_+}\right)(\norm{g}_{L^1(0,\hat{T})}+\norm{g}_{L^2(\hat{T},T)}).
\end{split}\end{equation}
For $0\leq t \leq \hat{T}$, we have from the H\"older and Sobolev inequalities, \eqref{eqn-2.31} and \eqref{2.1520} that
\begin{equation*}
\begin{split}
&\int_0^{\hat{T}}\norm{\phi_{t}}_{L^\infty}dt\\
&\le \sqrt{2}\int_0^{\hat{T}}\norm{\phi_{t}}^\frac{1}{2}\norm{\phi_{xt}}^\frac{1}{2}dt\\
&\le \sqrt{2}\left(\int_0^{\hat{T}}\norm{\phi_{t}}^2dt\right)^\frac{1}{4}\left(\int_0^{\hat{T}}\sigma(t)\norm{\phi_{xt}}^2dt\right)^\frac{1}{4}
\left(\int_0^{\hat{T}}\sigma(t)^{-\frac{1}{2}}dt\right)^\frac{1}{2}\\
&\le C C_0^\frac12
\left(\int_0^1\sigma(t)^{-\frac{1}{2}}dt+\int^{\hat{T}}_1 \sigma(t)^{-\frac{1}{2}}dt\right)^\frac{1}{2}\\
&\le C C_0^\frac12\left(\int_0^{\hat{T}}t^{-\frac{1}{2}}dt+\hat{T}-1\right)^\frac{1}{2}\le C.
\end{split}\end{equation*}
Using the H\"older and Sobolev inequalities again, we have from \eqref{eqn-2.25} that
\begin{equation*}
\begin{split}
\int_0^{\hat{T}}\norm{\phi_{x}}_{L^\infty}dt\le& \sqrt{2}\int_0^{\hat{T}}\norm{\phi_{x}}^\frac{1}{2}\norm{\phi_{xx}}^\frac{1}{2}dt\\
\le& \sqrt{2}\left(\int_0^{\hat{T}}\norm{\phi_{x}}^2dt\right)^\frac{1}{4}\left(\int_0^{\hat{T}}\norm{\phi_{xx}}^2dt\right)^\frac{1}{4}
\left(\int_0^{\hat{T}}1 dt\right)^\frac{1}{2}
\le C.
\end{split}\end{equation*}

The above inequalities in combination with \eqref{le-def} gives
\begin{equation}\label{gl1}
\begin{split}
\frac{2D}{\chi u_+}\norm{g}_{L^1(0,\hat{T})}\le C.
\end{split}\end{equation}
For $\hat{T}\le t \le T$, one deduces from the H\"older and Sobolev inequalities, \eqref{eqn-2.31} and \eqref{2.1520} that
\begin{equation*}
\begin{split}
\int^T_{\hat{T}}\norm{\phi_{t}}_{L^\infty}^2dt\le& 2\int^T_{\hat{T}}\norm{\phi_{t}}\norm{\phi_{xt}}dt\\[2mm]
\le& 2\left(\int^T_{\hat{T}}\norm{\phi_{t}}^2dt\right)^\frac{1}{2}\left(\int^T_{\hat{T}}\norm{\phi_{xt}}^2dt\right)^\frac{1}{2}\\[2mm]
\le& 2\left(\int^T_{\hat{T}}\norm{\phi_{t}}^2dt\right)^\frac{1}{2}\left(\int^T_{\hat{T}}\sigma(t)\norm{\phi_{xt}}^2dt\right)^\frac{1}{2}
\le C,
\end{split}\end{equation*}
where we have used the fact that $\sigma(t)=1$ for  $\hat{T}\le t\le T$. In a similar way, we have
\begin{equation*}
\begin{split}
\int^T_{\hat{T}}\norm{\phi_{x}}_{L^\infty}^2dt\le& 2\int^T_{\hat{T}}\norm{\phi_{x}}\norm{\phi_{xx}}dt\\
\le& 2\left(\int^T_{\hat{T}}\norm{\phi_{x}}^2dt\right)^\frac{1}{2}\left(\int^T_{\hat{T}}\norm{\phi_{xx}}^2dt\right)^\frac{1}{2}
\le C.
\end{split}\end{equation*}
The above two inequalities yields that
$
\frac{2D}{\chi u_+}\norm{g}_{L^2(\hat{T}, T)}\le C.
$
This along with \eqref{gl1} and \eqref{l-in} completes the proof of Lemma \ref{secl}.
\end{proof}


\subsection{Proof of Theorem $\ref{global existence}$}
Now we turn to prove Theorem \ref{global existence}. It first follows from \eqref{eqn-2.25}, \eqref{eqn-2.31} and \eqref{seclp} that
\begin{equation}\label{1le-last}
\begin{cases}
\begin{split}
&\|\phi^\delta\|^2_{1} +\|\psi^\delta\|_1^2+\int_0^T\left(\|\phi^\delta_x\|^2_{1}
+\|\psi^\delta_x\|^2\right)dt \leq CC_0,\\
&\sigma(\|\phi^\delta_{t}\|^2+\|\phi^\delta_{xx}\|^2)+D\int_0^T\sigma\|\phi^\delta_{xt}\|^2dt\le CC_0,\\
&\norm{\psi^\delta_x}_{L^\infty}\le C,
\end{split}
\end{cases}
\end{equation}
which together with \eqref{2.1520} leads to
\begin{equation*}
\begin{cases}
&(\phi^\delta, \psi^\delta)\in L^\infty([0,\infty), H^1(\mathbb{R})),  (\phi^\delta_t, \psi^\delta_t)\in L^2([0,\infty), L^2(\mathbb{R})), \\[2mm]
&(\phi^\delta_x, \psi^\delta_x)\in L^2([0,\infty), L^2(\mathbb{R})), \ \ \  \phi^\delta_x\in L^\infty((0,\infty), L^2(\mathbb{R})),\\[2mm]
&\phi_{xx}^\delta\in  L^2([0,\infty), L^2(\mathbb{R}))\cap L^\infty((0,\infty),L^2(\mathbb{R})),\ \ \ \phi^\delta_{xt}\in L^2((0,\infty), L^2(\mathbb{R})).
\end{cases}
\end{equation*}
By the Aubin-Lions-Simon lemma, we can extract a subsequence, still denoted by $(\phi^\delta,\  \psi^\delta)$, such that the following convergence hold as $\delta\to 0$
\begin{equation*}
\begin{cases}
(\phi^\delta, \psi^\delta)(\cdot,t)\to (\phi, \psi)(\cdot,t)\ \text{strongly  in} \ C([0,\infty), C(\mathbb{R})), \\[2mm]
\phi_x^\delta(\cdot,t)\to \phi_x(\cdot,t)\ \text{strongly  in} \ C((0,\infty), C(\mathbb{R})),\\[2mm]
\phi^\delta_{xx}(\cdot,t)\to \phi_{xx}(\cdot,t)\ \text{weakly  in} \ L^2([0,\infty),L^2(\mathbb{R})),\\[2mm]
\psi^\delta_x(\cdot,t)\to \psi_x(\cdot,t)\ \text{weakly  in} \ L^2([0,\infty),L^2(\mathbb{R})).
\end{cases}
\end{equation*}
Thus, it is easy to show that the limit function $(\phi, \psi)$  is indeed a weak solution of the system \eqref{nonlinear system0}-\eqref{ini0} and inherits all the bounds of \eqref{1le-last}. Thus, \eqref{le-last} is proved.

To complete the proof of  Theorem $\ref{global existence}$,
we only need to prove $\eqref{long-time behavior}$. For all $2\leq p<\infty$, we have from \eqref{le-last} and \eqref{pwlt} that
\begin{equation*}
\norm{\psi_x(x,t)}_{L^p}\le \norm{\psi_x(x,t)}_{L^\infty}^{\frac{p-2}{p}}\norm{\psi_x(x,t)}^{\frac{2}{p}}_{L^2}
\leq C\norm{\psi_x(\cdot,t)}^{\frac{2}{p}}\to 0 ~~\mathrm{as}~~ t\to\infty.
\end{equation*}
Hence $\eqref{long-time behavior}$ is proved and the proof of Theorem $\ref{global existence}$ is completed.

\section{Numerical verifications and predictions}
In this section, we shall numerically verify our results and further exploit the impact of the regularity of initial data on the regularity of solutions. For brevity, we shall take the case considered in Theorem \ref{u_+=0} as a target for simulations only, but similar conclusions apply to the case considered in Theorem \ref{pw6-th}.  We have shown in Theorem \ref{u_+=0} that if the initial data satisfy \eqref{pw6-ict1}, then the system \eqref{hp} has a global weak  solution $(u,v)$ where $u$ converges to a shifted traveling wave profile uniformly as time tends to infinity and $v$ converges to a shifted traveling wave profile in $L^p$-norm for $2\leq p<\infty$. However, it is unknown whether or not the regularity of weak solutions shown Theorem $\ref{u_+=0}$ can be improved. {\color{black}It has been shown in the existing literature (cf. \cite{jin13, Li09, Wang-xiang-yu}) that the initial data has $H^s$-regularity ($s\geq 1$), then strong solutions in $H^s$ can be obtained}. In this paper,  we focus on the initial data with lower regularity in $L^p$-space which in particular allows the discontinuous data. In our simulations, we shall first numerically illustrate the convergence of traveling wave profiles to verify our results. Second, we shall check if the regularity of (weak) solutions can be improved further to some extend. We perform the simulations in the interval $[0, 400]$ with Dirichlet boundary condition to mimic the whole space $\R$. We assume the asymptotic states are  $u_-=2, u_+=1, v_-=\frac{3-\sqrt{3}}{2}, v^+=1$ satisfying the relation \eqref{R-H condition}  and set the initial value $(u_0, v_0)$ as
\begin{eqnarray}\label{ini}
u_0(x)=\begin{cases}2, & 0\leq x\leq 50\\1, & 50< x\leq 400  \end{cases}, \ \ v_0(x)=\begin{cases}\frac{3-\sqrt{3}}{2}, & 0\leq x\leq 50\\2, & 50< x\leq 400\end{cases}
\end{eqnarray}
as plotted in Fig.\ref{fig1} (see red color dashed line).
\begin{figure}[!htbp]
\centering
\includegraphics[width=7.5cm]{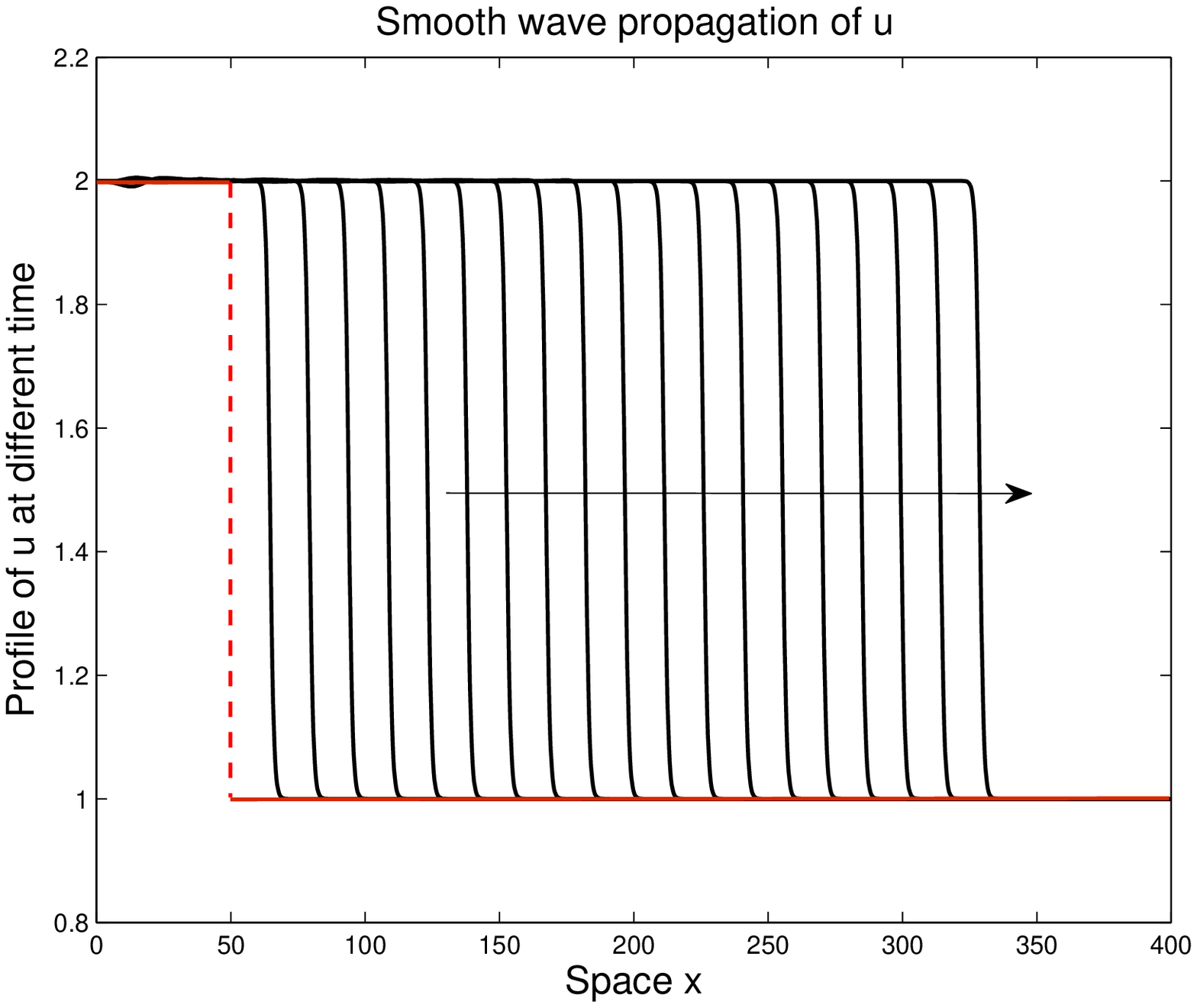}
\includegraphics[width=7.5cm]{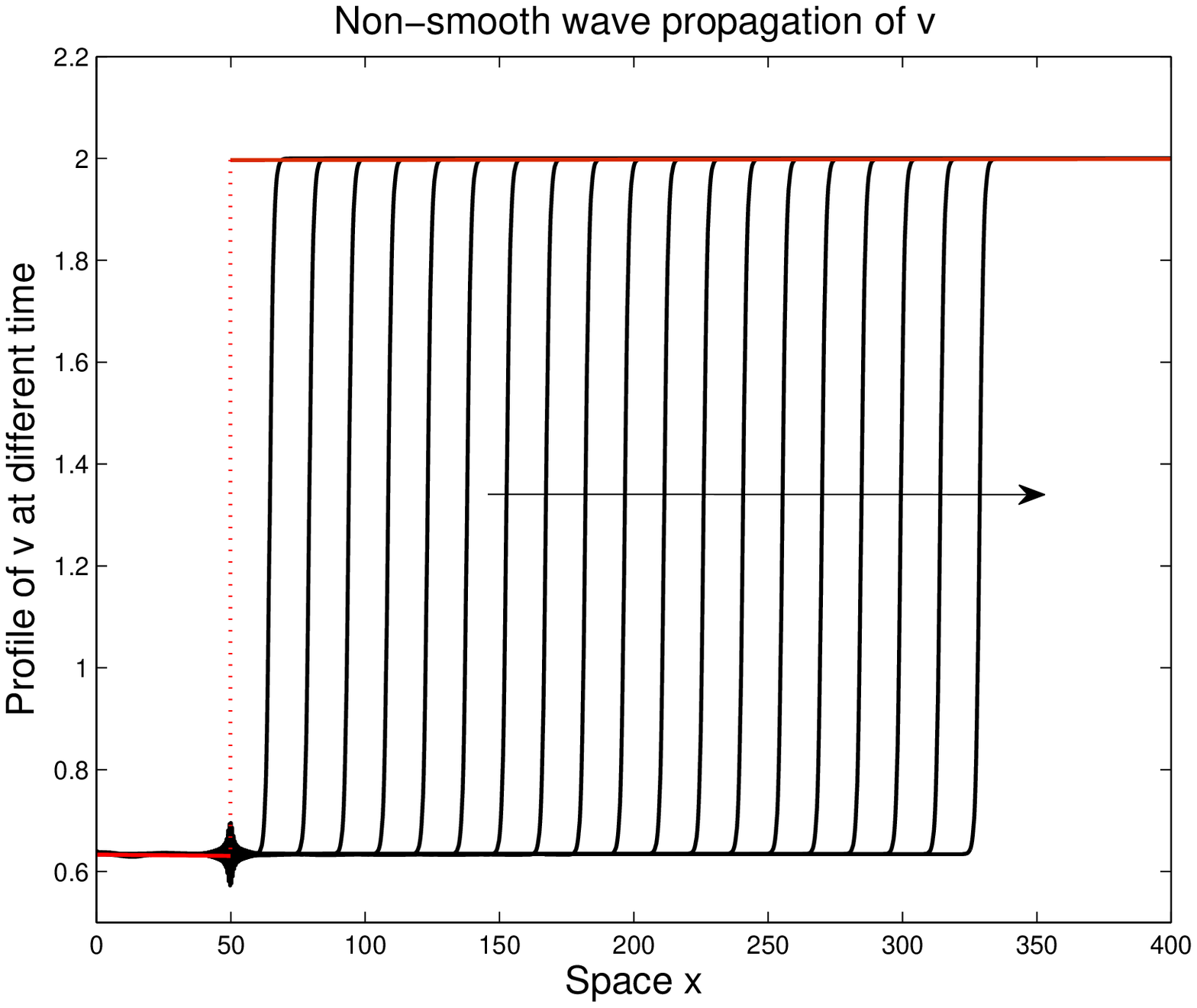}

\caption{Numerical illustration of stabilization of smooth wave profile formation for solution component $u$ and non-smooth wave profile formation for solution component $v$ to system \eqref{hp} with discontinuous initial data given by \eqref{ini} in an interval $[0,400]$, where we choose $\chi=1$. Each curve represents the solution (wave) profile at a certain time starting at $t= 0$ (red dashed curve) and spaced by $t=20$. The arrow indicates that the wave propagates from the left to the right. The amplified visualization of the fuzzy part of $v$-profile near the discontinuous point $x=50$ is plotted in Fig.\ref{fig2}. }
\label{fig1}
\end{figure}

\begin{figure}[!htbp]
\centering
\includegraphics[width=7cm]{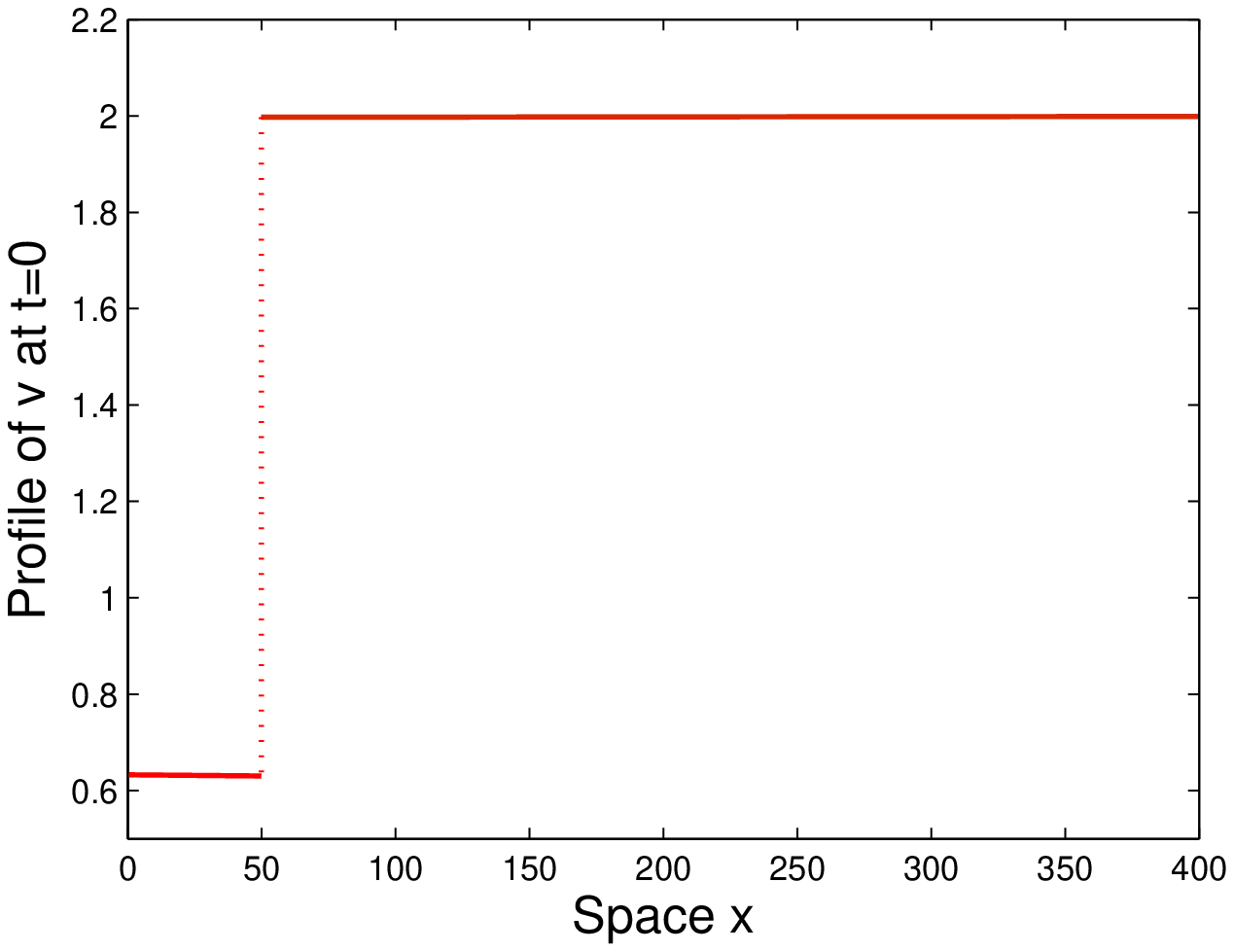}
\includegraphics[width=7cm]{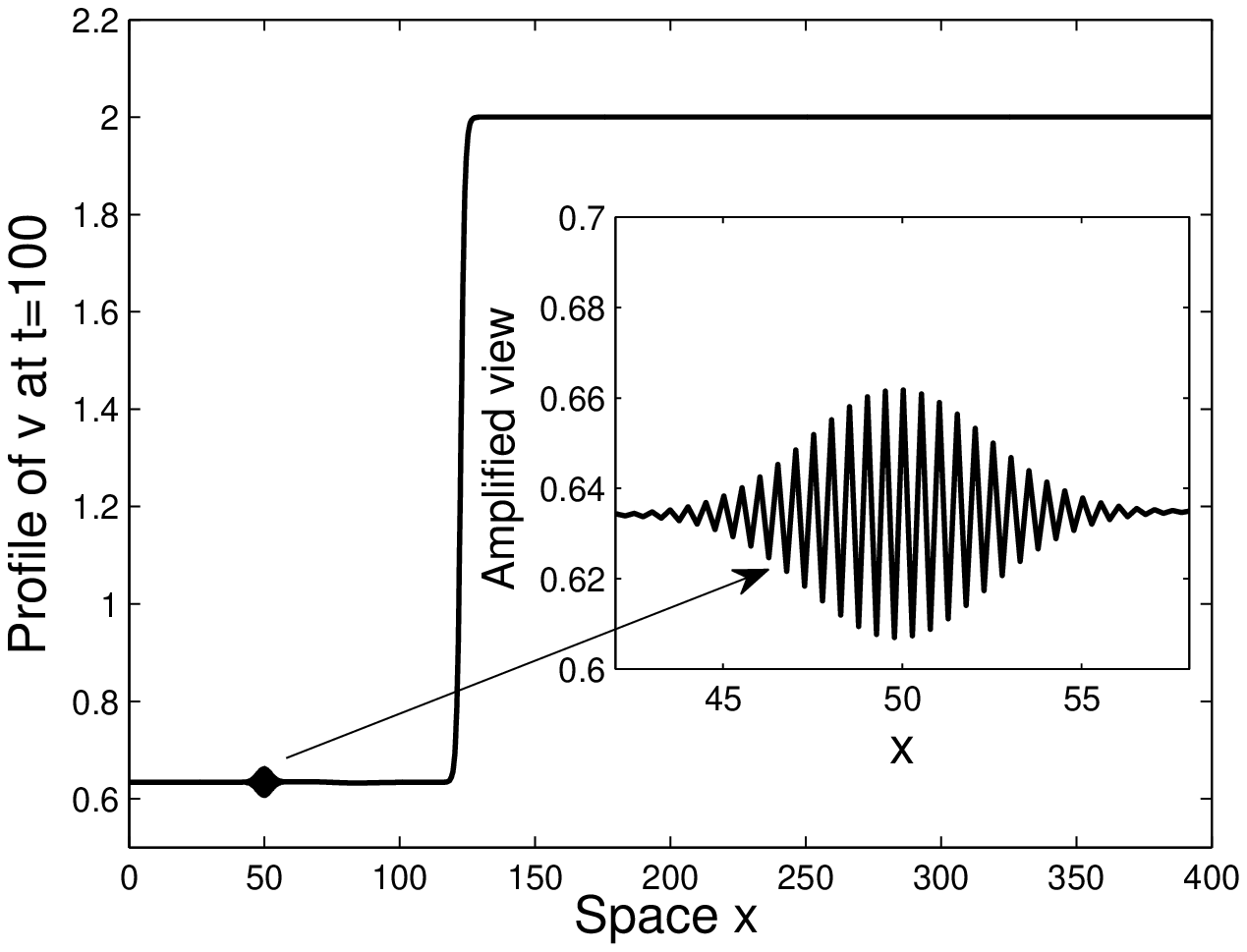}

\includegraphics[width=7cm]{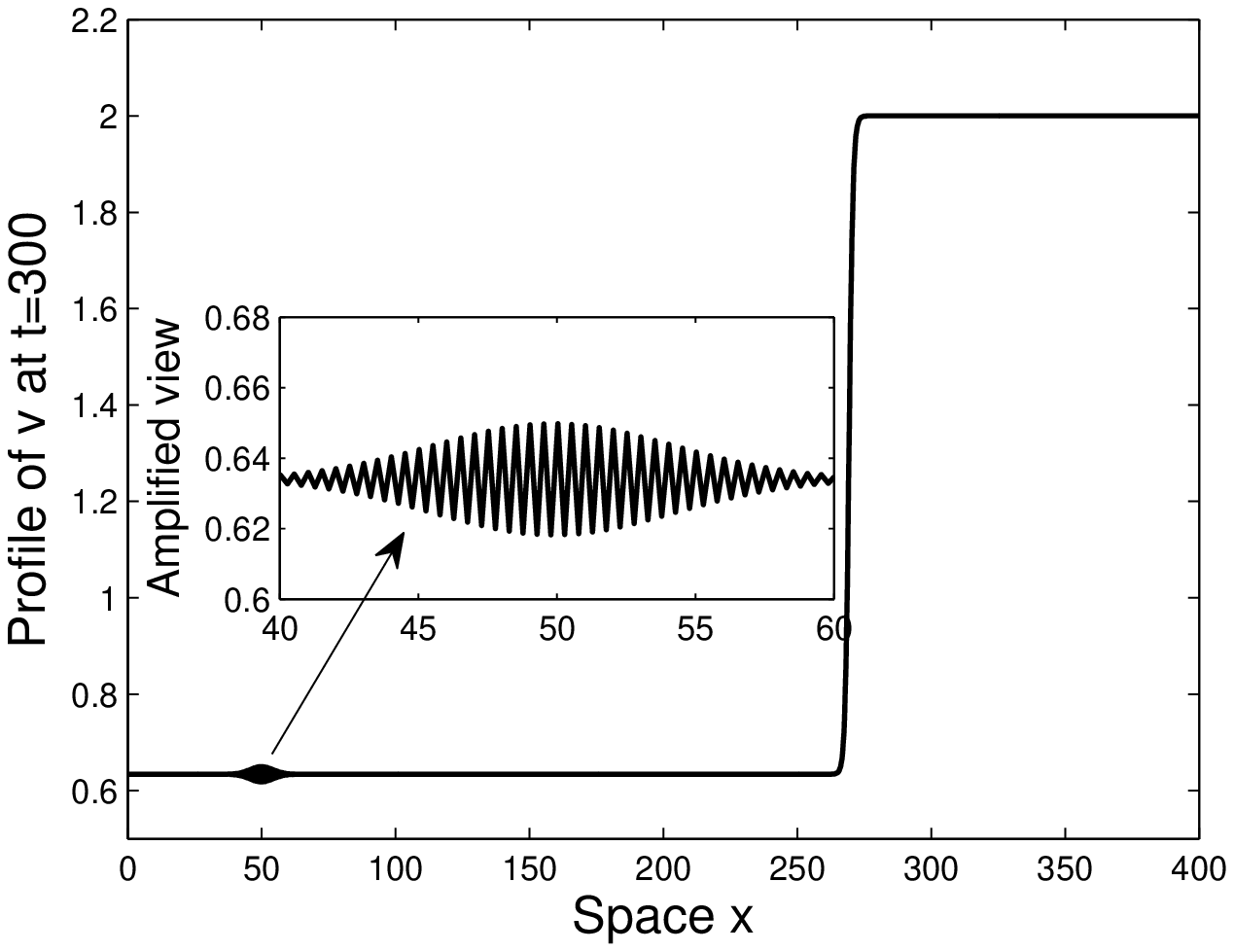}
\includegraphics[width=7cm]{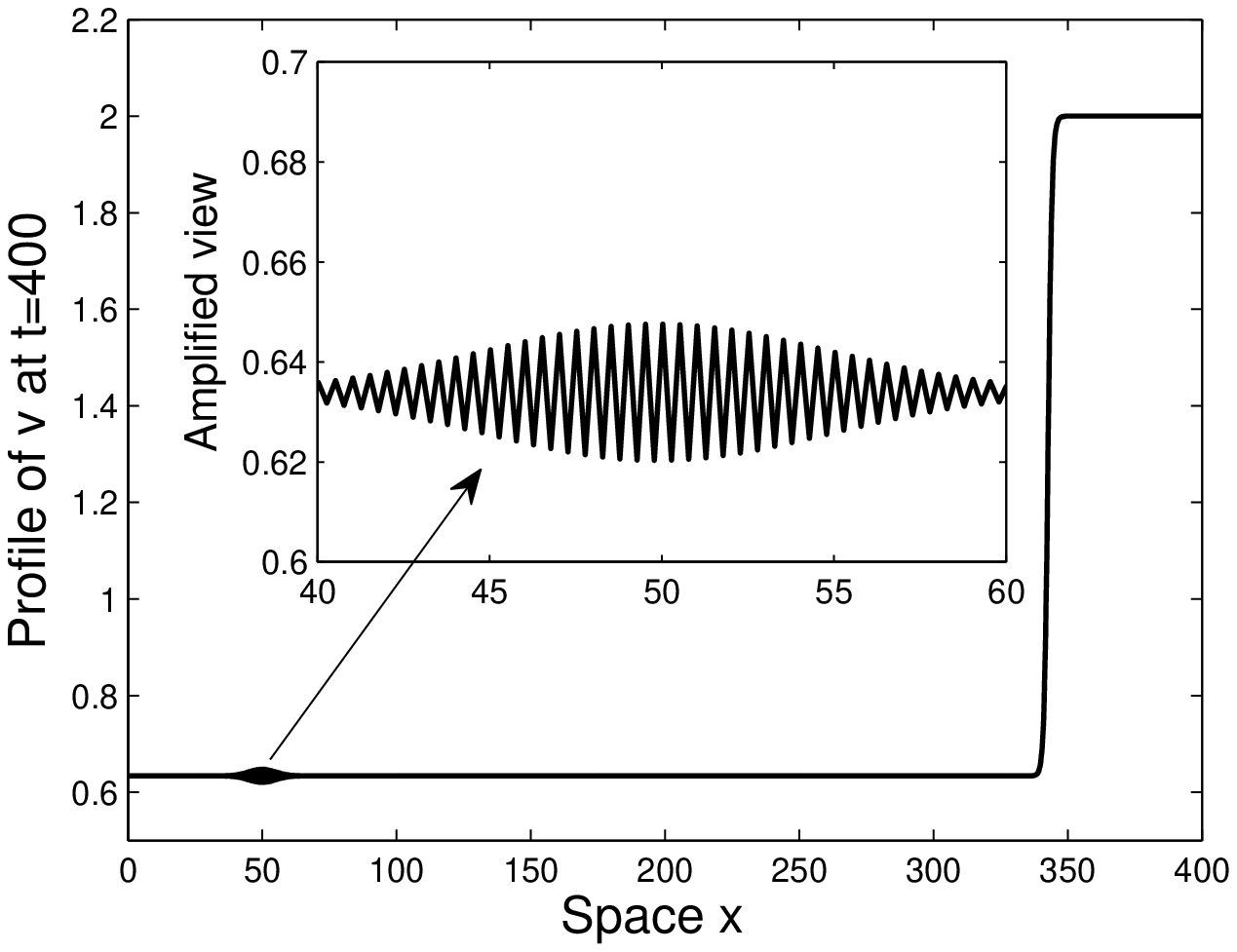}

\caption{The amplified visualization of the fuzzy part for $v$-profile near the discontinuous point $x=50$ plotted in Fig.\ref{fig1}, where we see that traveling wave profile $v$ is not differentiable near the initial discontinuous point $x=50$ and the size of non-differentiability is expanding in space as time evolves.}
\label{fig2}
\end{figure}
Then we prescribe Dirichlet boundary conditions compatible with the asymptotic states $u_{\pm}$ and $v_{\pm}$ and solve the parabolic-hyperbolic system \eqref{hp} with Matlab PDE solver based on the finite-difference method. The numerical solution profiles at progressive time steps are plotted in Fig.\ref{fig1} where we observe that solution component $u$ is smooth for any $t>0$ and converges to a shifted traveling wave profile as $t\to \infty$ but $v$ is non-smooth for any $t>0$ and converge to a shifted traveling wave profile which is non-smooth around the initial discontinuous point $x=50$. This is well consistent with our analytical results of Theorem \ref{u_+=0}. By an amplified view of the non-smoothness near the discontinuous point $x=50$ shown in Fig.\ref{fig2}, one can find that the solution component $v$ is still continuous but not differentiable and furthermore the non-smoothness is expanding in space as time evolves. Our simulations imply that although the parabolic-hyperbolic system \eqref{hp} has a dissipative (parabolic) effect, the classical solutions of (\ref{hp}) seem to be impossible due to the hyperbolic effect and only weak solutions can be established if initial data have only $L^p$-regularity. But the regularity of solution component $u$ can be slightly improved from discontinuity to continuity (see Theorem \ref{u_+=0}). The results in \cite{jin13,Li09} have shown that if the initial value has $H^1$-regularity, then the solution $(u,v)$ may have the same regularity as initial data. However it is unknown if the regularity of solutions can be improved further. Here we exploit the possibility through numerical simulations. To this end, we choose initial value $(u_0,v_0)$ as
\begin{eqnarray}\label{inin}
u_0(x)=\begin{cases}2, & 0\leq x\leq 20\\-\frac{x}{20}+3, & 20<x<40 \\1, & 50< x\leq 300  \end{cases},
\ \ v_0(x)=\begin{cases}\frac{3-\sqrt{3}}{2}, & 0\leq x\leq 20\\\frac{1+\sqrt{3}}{40}x+1-\sqrt{3}, & 20<x<40 \\2, & 20< x\leq 300\end{cases}
\end{eqnarray}
which is continuous and has $H^1$-regularity as plotted in the first panel of Fig.\ref{fig3}. It turns out from numerical results shown in Fig.\ref{fig3} that smooth solutions can be obtained for $(u_0,v_0)$ defined in \eqref{inin}. This finding indicates that the parabolic effect in the system \eqref{hp} can dominate over hyperbolic effect to smoothen  solutions if the initial value has $H^1$-regularity.  However, this is no longer the case once the regularity of initial values is reduced to be discontinuity, as shown in Fig.\ref{fig1}.  {\color{black}Hence these numerical simulations allow us to speculate that the minimal regularity of initial data leading to classical (smooth) solutions of \eqref{hp} is perhaps $H^1$ and the hyperbolic effect of system \eqref{hp} will play an important role when the initial value has regularity lower than $H^1(\R)$}. In this paper we are unable to prove these regularity properties of solutions found in numerical simulations and our speculations may launch interesting questions to pursue in the future.
\begin{figure}[!htbp]
\centering
\includegraphics[width=7.5cm]{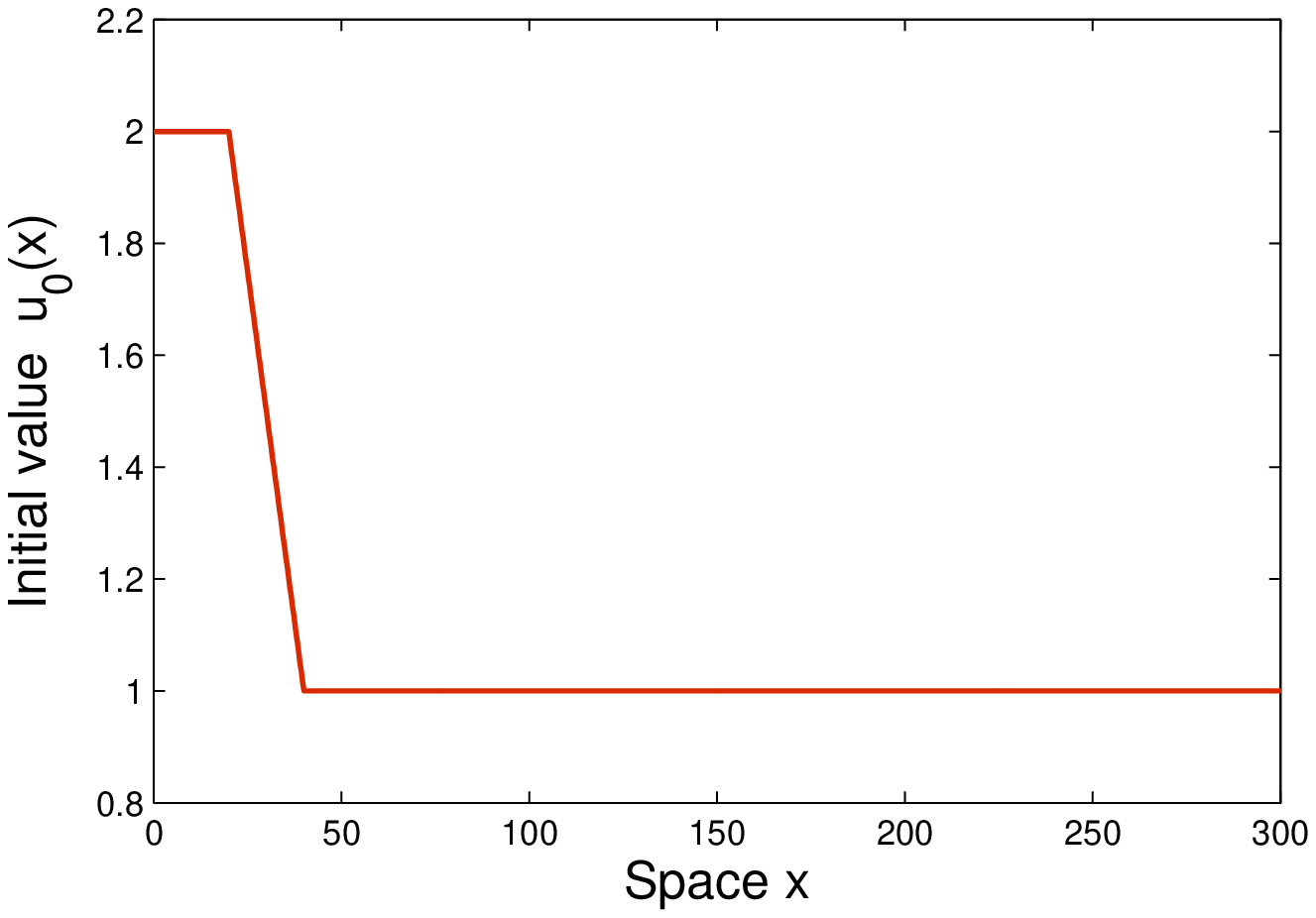}
\includegraphics[width=7.5cm]{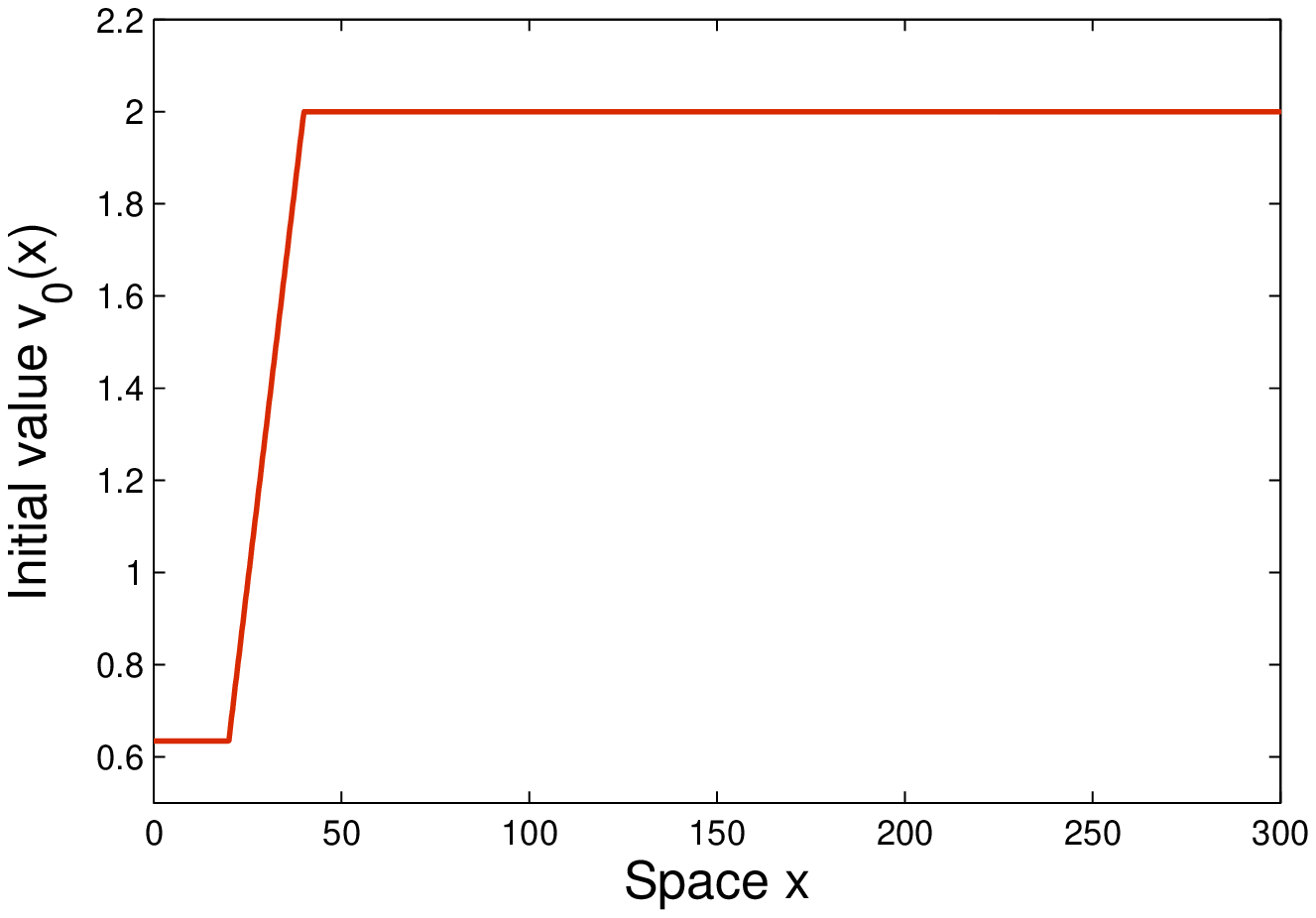}

\includegraphics[width=7.5cm]{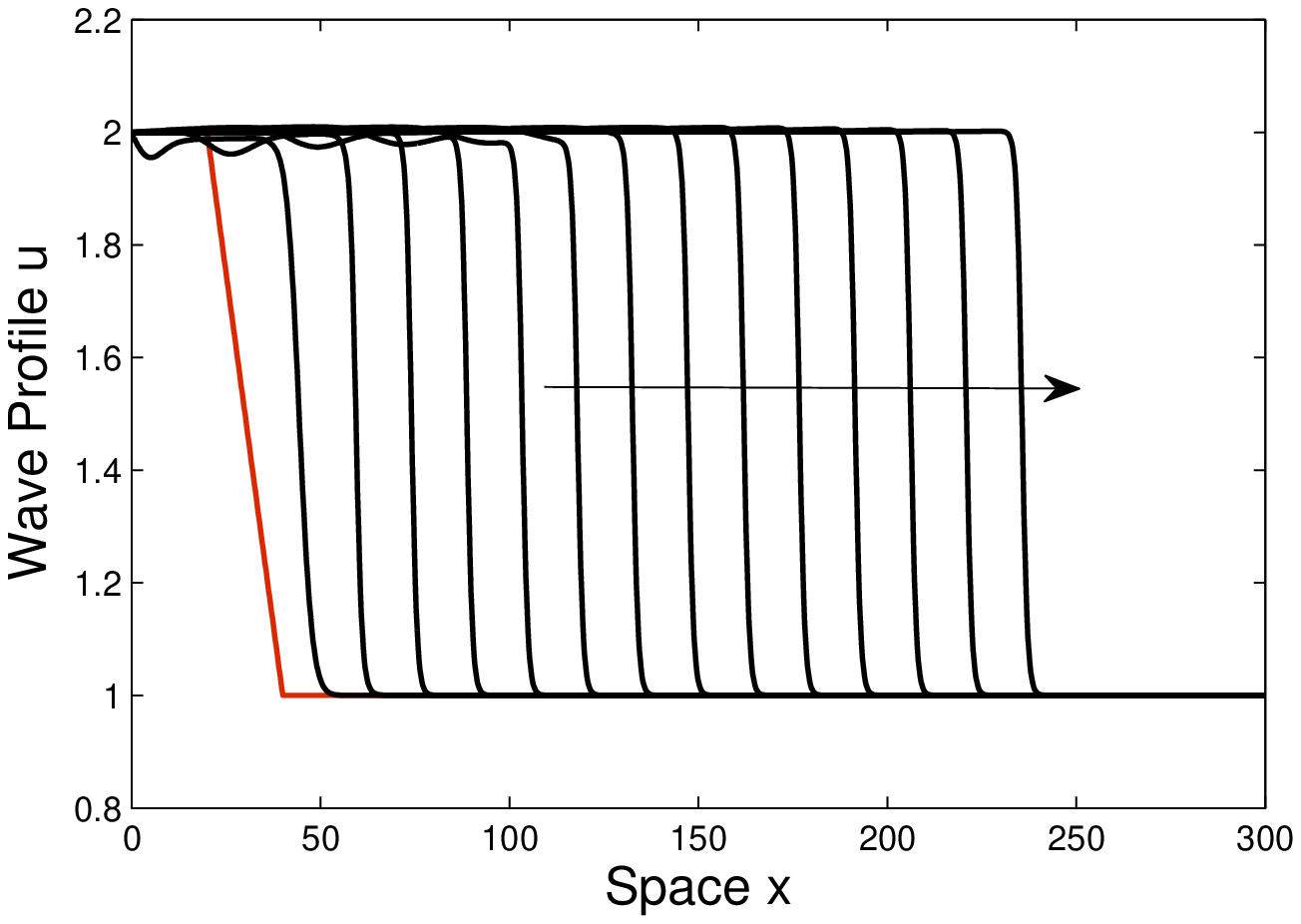}
\includegraphics[width=7.5cm]{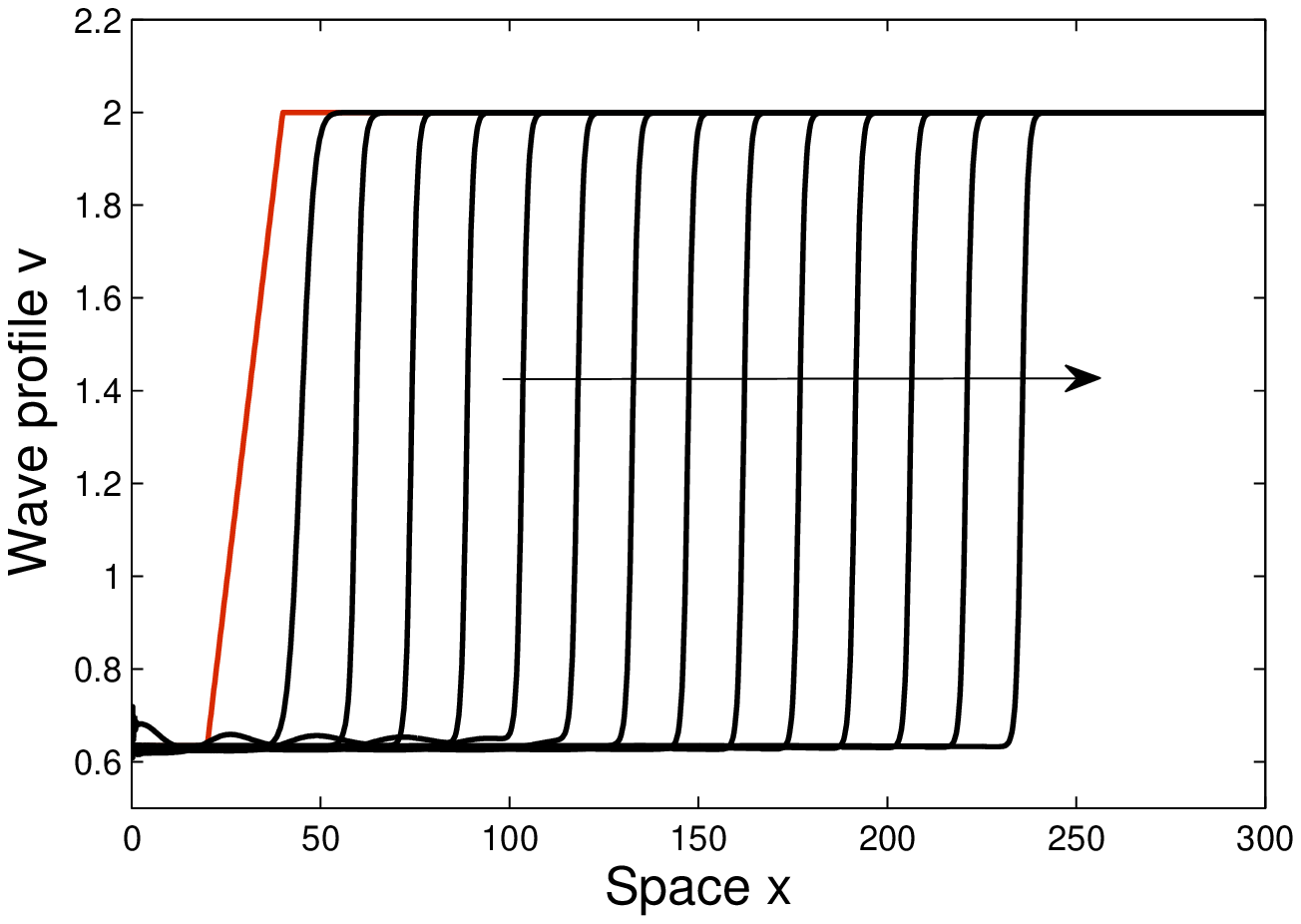}

\caption{Stabilization of smooth traveling waves for \eqref{hp}  with continuous initial data given by \eqref{inin}, where $\chi=1$.}
\label{fig3}
\end{figure}

\bigbreak

\noindent \textbf{Acknowledgement}.  H. Peng was supported by the Fundamental Research Funds
for the Central Universities 2017BQ008. Z. Wang was supported by an internal grant 4-ZZHY from the Hong Kong Polytechnic University.



\begin{thebibliography}{99}
\bibitem{Adams} R.A. Adams and J. J.F. Fournier, {\it Sobolev spaces}, Pure and Applied Mathematics. 140 (2nd ed.). Boston, Academic Press, 2003.
\bibitem{DL} C. Deng and T. Li, Well-posedness of a 3D parabolic-hyperbolic Keller-Segel system in the Sobolev space framework, {\it J. Differential Equations}, 257(2014), 1311-1332.
\bibitem{Fan-zhao} J. Fan and K. Zhao, Blow up criteria for a hyperbolic-parabolic system arising from chemotaxis.
{\it J. Math. Anal. Appl.,} 394(2012), 687-695.
\bibitem{Frid}H. Frid and Y. Li, A boundary value problem for a class of anisotropic degenerate parabolic-hyperbolic equations, {\it Arch. Rational Mech. Anal.}, 226(2017): 975-1008.
\bibitem{Rafael1} R. Granero-Belinch\'{o}n, On the fractional Fisher information with applications to a hyperbolic-parabolic system of chemotaxis, {\it J. Differential Equations}, 262(2017), 3250-3283.
\bibitem{Rafael2} R. Granero-Belinch\'{o}n, Global solutions for a hyperbolic-parabolic system ofchemotaxis, {\it J. Math. Anal. Appl.}, 449(2017), 872-883.
\bibitem{GXZZ} J. Guo, J.X. Xiao, H.J. Zhao and C.J. Zhu, Global solutions to a hyperbolic-parabolic coupled system with large initial data, {\it Acta Math. Sci. Ser. B Engl. Ed.,} 29 (2009), 629-641.
\bibitem{Hao}   C. Hao,  Global well-posedness for a multidimensional chemotaxis model in critical Besov spaces, {\it Z. Angew Math. Phys.}, 63 (2012),  825-834.
\bibitem{HaoJDE} J. Hao, Z. Liu and J. Yong, Regularity analysis for an abstract system of coupled hyperbolic and parabolic equations. {\it J. Differential Equations}, 259(2015), 4763-4798.
\bibitem {Hoff-1987}
 D. Hoff, Global existence for 1D, compressible, isentropic Navier-Stokes equations with large initial data, {\it Trans. Amer. Math. Soc.}, 303(1987), 169-181.

\bibitem {Hoff-19980}
 D. Hoff, Global solutions of the equations of one-dimensional, compressible flow with large data and forces, and with differing end states, {\it Z. Angew. Math. Phys.},
 49(1998), 774-785.

\bibitem {Hoff-1992}
 D. Hoff, Spherically symmetric solutions of the Navier-Stokes equations for compressible, isothermal flow with large, discontinuous initial data,  {\it Indiana Univ. Math. J.},
  41(1992), 1225-1302.

\bibitem {Hoff-jde} D. Hoff, Global solutions of the Navier-Stokes equations for multidimensional compressible
flow with discontinuous initial data, {\it J. Differential
Equations,} 120(1995), 215-254.

\bibitem{Hoff-arma} D. Hoff, Discontinuous solutions of the Navier-Stokes equations for multidimensional flows of
heat-conducting fluids, {\it Arch. Rational Mech. Anal.,} 139(1997),
303-354.

\bibitem {Hoff-1989}
 D. Hoff and T.P. Liu, The inviscid limit for the Navier-Stokes equations of compressible, isentropic flow with shock data, {\it Indiana Univ. Math. J.}, 38(1989), 861-915.

\bibitem{HWJMPA} Q.Q. Hou and Z.A. Wang, Convergence of boundary layers for the Keller-Segel system with singular sensitivity in the half-plane, {\it J. Math. Pures. Appl.}, doi.org/10.1016/j.matpur.2019.01.008, 2019.

\bibitem{jin13}
H.Y. Jin, J.Y. Li and Z.A. Wang, Asymptotic stability of traveling
waves of a chemotaxis model with singular sensitivity, {\it J.
Differential Equations,} 255 (2013), 193-219.

\bibitem{Levine97}
 H.A. Levine and B.D. Sleeman, A system of reaction diffusion equtions arising in the theory of
reinforced random walks, {\it SIAM J. Appl. Math.,} 57(1997),
683-730.
\bibitem{LSN} H.A. Levine, B.D. Sleeman and M. Nilsen-Hamilton,  A mathematical model for the roles of pericytes and macrophages in the initiation of angiogenesis. I. the role of protease inhibitors in preventing angiogenesis,  {\it Math. Biosci.}, 168(2000), 71-115.
\bibitem{Li111}
 D. Li, T. Li and  K. Zhao, On a hyperbolic-parabolic system modeling chemotaxis, {\it Math. Models Methods Appl. Sci.,} 21(2011), 1631-1650.
\bibitem{Li-Zhao-JDE} H. Li and K. Zhao, Initial-boundary value problems for a system of hyperbolic balance laws arising from chemotaxis, {\it J. Differential Equations}, 258(2015), 302-308.
\bibitem{Lipz}
D. Li, R. Pan and K. Zhao, Quantitative decay of a hybrid type chemotaxis model with
large data, {\it Nonlinearity,} 28(2015), 2181-2210.
\bibitem{Lij13}
J.Y. Li, L.N. Wang and K.J. Zhang, Asymptotic stability of a
composite wave of two traveling waves to a hyperbolic-parabolic
system modeling chemotaxis, {\it Math. Methods Appl. Sci.,}
 36(2013), 1862-1877.
\bibitem{Li112}
T. Li, R.H. Pan and K. Zhao, Global dynamics of a chemotaxis model on
bounded domains with large data, {\it SIAM J. Appl. Math.,}
72(2012), 417-443.
\bibitem{Li09}
T. Li and Z.A. Wang, Nonlinear stability of traveling waves to a
hyperbolic-parabolic system modeling chemotaxis, {\it SIAM J. Appl.
Math.,} 70(2009), 1522-1541.
\bibitem{Li10}
T. Li and Z.A. Wang, Nonlinear stability of large amplitude viscous shock waves of a
generalized hyperbolic-parabolic system arising in chemotaxis,
{\it Math. Models Methods Appl. Sci.,} 20(2010), 1967-1998.
\bibitem{Liu09}
T.P. Liu and Y. Zeng, Time-asymptotic behavior of wave propagation around a
viscous shock profile, {\it Commun. Math. Phys.,} 290(2009), 23-82.

\bibitem{Othmer97}
H.G. Othmer and  A. Stevens, Aggregation, blowup, and collapse: the
ABC's of taxis in reinforced random walks, {\it SIAM J. Appl.
Math.,} 57(1997), 1044-1081.
\bibitem{OP} W.S. Oza\'{n}ski and B.C. Pooley, Leray's fundamental work on the Navier-Stokes
equations: a modern review of ``Sur le mouvement d'un liquide visqueux emplissant l'espace'',  arXiv:1708.09787v1, 31 August 2017.


\bibitem{Qin}Y. Qin and L. Huang. {\it Global well-posedness of nonlinear parabolic-hyperbolic coupled systems}. Springer Science and Business Media, 2012.

\bibitem{Rou}T. Roub\'{i}\v{c}ek, {\it Nonlinear Partial Differential Equations with Applications (2nd ed.)},  Basel: Birkh\"{a}user, 2013.
\bibitem{smoller} J. Smoller, {\it Shock Waves and Reaction-Diffusion Equations}, Second edition, Spring-Verlag, Berlin, 1994.

\bibitem{sz93}
A. Szepessy and Z. P. Xin, Nonlinear stability of viscous shock waves, {\it Arch. Rational
Mech. Anal.,} 122(1993), 53-103.

\bibitem{wang08}
Z.A. Wang  and T. Hillen, Shock formation in a chemotaxis model, {\it
Math. Methods Appl. Sci.,} 31 (2008), 45-70.
\bibitem{Wang-xiang-yu}
Z.A. Wang, Z. Xiang and P. Yu,
Asymptotic dynamics on a singular chemotaxis system modeling onset of tumor angiogenesis,
{\it J. Differential Equations,} 260(2016), 2225-2258.
\bibitem{ZhangYH}Y. Zhang, Global analysis of smooth solutions to a hyperbolic-parabolic coupled system, {\it Front. Math. China}, 8:1437-1460, 2013.
\bibitem{zhang07}
M. Zhang and C.J. Zhu, Global existence of solutions to a
hyperbolic-parabolic system, {\it Proc. Amer. Math. Soc.,}  135
(2007), 1017-1027.
\bibitem{zhang131}
Y. Zhang, R.H. Pan and Z. Tan, Zero dissipation limit to a Riemann solution consisting of two shock waves for the 1D compressible isentropic Navier-Stokes equations, {\it Sci. China Math.,} 56 (2013), 2205-2232.
\bibitem{zhang132}
 Y. Zhang, R.H. Pan, Y. Wang and Z. Tan, Zero dissipation limit with two interacting shocks of the 1D non-isentropic Navier-Stokes equations,
 {\it Indiana Univ. Math. J.,} 62(2013), 249-309.
\bibitem{zhangts}
Y. Zhang, Z. Tan and M.B. Sun, Global existence and asymptotic behavior of smooth
solutions to a coupled hyperbolic-parabolic system, {\it Nonlinear Analysis: Real World Applications,}
14(2013), 465-482.
\bibitem{Zheng}S. Zheng. {\it Nonlinear parabolic equations and hyperbolic-parabolic coupled systems}. CRC Press, 1995.
\end{thebibliography}
\end{document}